\numberwithin{equation}{section}
\newtheorem{theorem}{Theorem}[section]
\newtheorem{lemma}[theorem]{Lemma}
\theoremstyle{definition}
\newtheorem{definition}[theorem]{Definition}
\theoremstyle{remark}
\newtheorem{remark}[theorem]{Remark}
\newcommand{\Pth}[1]{\partial_t^h\left(#1\right)}
\newcommand{\Div}{\operatorname{div}_x}
\newcommand{\Hdiv}{\vc{W}_0^{\text{div},2}(\Om)}
\newcommand{\Hcurl}{\vc{W}_0^{\text{curl},2}(\Om)}
\newcommand{\Curl}{\operatorname{curl}_x}
\newcommand{\Grad}{\nabla_x}
\newcommand{\vr}{\varrho}
\newcommand{\vu}{\vc{u}}
\newcommand{\vv}{\vc{v}}
\newcommand{\vw}{\vc{w}}
\newcommand{\vc}[1]{{\bm{#1}}}
\newcommand{\weak}{\rightharpoonup}
\newcommand{\norm}[1]{\left\Vert#1\right\Vert}
\newcommand{\abs}[1]{\left|#1\right|}
\newcommand{\Set}[1]{\left\{#1\right\}}
\newcommand{\jump}[1]{\left\llbracket #1\right\rrbracket}
\newcommand{\vrho}{\varrho}
\newcommand{\inb}{\in_{\text{b}}}
\newcommand{\Dt}{\Delta t}
\newcommand{\R}{\mathbb{R}}
\newcommand{\weakto}{\rightharpoonup}
\newcommand{\mcw}{\mathcal{W}}
\newcommand{\Om}{\ensuremath{\Omega}}
\newcommand{\cOm}{\ensuremath{\overline{\Omega}}}
\newcommand{\pOm}{\ensuremath{\partial\Omega}}
\newcommand{\Dom}{(0,T)\times\Omega}
\newcommand{\cDom}{[0,T)\times\overline{\Omega}}
\newcommand{\eff}{P_{\text{eff}}}
\newcommand{\binner}{\partial E \setminus \partial \Om}
\newcommand{\Aoph}[1]{\Pi_h^V \left(\Grad \Delta^{-1}\left[#1\right]\right)}
\newcommand{\Aop}[1]{\Grad \Delta^{-1}\left[#1\right]}
\newcommand{\avg}[1]{\langle #1 \rangle_\Om}
\newcommand{\solutiontext}
{
	Let $\Set{(\vrho_{h},\vc{w}_{h},\vc{u}_{h})}_{h>0}$ 
	be a sequence of numerical solutions 
	constructed according to \eqref{eq:num-scheme-II} 
	and Definition \ref {def:num-scheme}.
}
\begin{document}

\title[A convergent mixed method for the Stokes approximation equations]
{A convergent mixed method for the Stokes approximation of  viscous compressible flow}

\author[Kenneth H. Karlsen]{Kenneth H. Karlsen}
\address[Kenneth H. Karlsen]
{\newline 
Centre of Mathematics for Applications \newline 
University of Oslo \newline
P.O. Box 1053, Blindern \newline 
N--0316 Oslo, Norway\newline
and\newline
Department of Scientific Computing\newline
Simula Research Laboratory\newline
P.O.Box 134\newline
N--1325 Lysaker, Norway}

\email[]{kennethk@math.uio.no}
\urladdr{http://folk.uio.no/kennethk/}

\author{Trygve K. Karper} 
\address[Trygve K. Karper]
{\newline 
Centre of Mathematics for Applications \newline 
University of Oslo\newline 
P.O. Box 1053, Blindern\newline 
N--0316 Oslo, Norway}
\email[]{t.k.karper@cma.uio.no}
\urladdr{http://folk.uio.no/trygvekk/}

\thanks{This work was supported by the Research Council of Norway through
an Outstanding Young Investigators Award (K. H. Karlsen). 
This article was written as part of the the international research program
on Nonlinear Partial Differential Equations at the Centre for
Advanced Study at the Norwegian Academy of Science
and Letters in Oslo during the academic year 2008--09.}

\date{\today}








%


\subjclass[2000]{Primary 35Q30, 74S05; Secondary 65M12}

\keywords{Compressible Stokes system, compressible fluid flow, Navier-slip boundary condition, 
mixed finite element method, discontinuous Galerkin scheme, convergence}

\maketitle
\begin{abstract}
We propose a mixed finite element method for the motion of a strongly viscous, ideal, and isentropic 
gas. At the boundary we impose a Navier--slip condition 
such that the velocity equation can be posed in mixed form with the 
vorticity as an auxiliary variable. 
In this formulation we design a finite element method, where the velocity and vorticity is approximated
with the div- and curl- conforming N\'ed\'elec elements, respectively, 
of the first order and first kind.
The mixed scheme is coupled to a standard piecewise constant upwind discontinuous Galerkin discretization
of the continuity equation.
For the time discretization, implicit Euler time stepping is used. 
Our main result is that the numerical solution converges to a weak solution 
as the discretization parameters go to zero. 
The convergence analysis is inspired by the continuous analysis 
of Feireisl and Lions for the compressible Navier--Stokes equations.
Tools used in the analysis include an equation for the effective viscous 
flux and various renormalizations of the density scheme.

\end{abstract}

\tableofcontents{}

\section{Introduction}
Let $\Om \subset \mathbb{R}^N$, $N=2$, $3$, be an open, convex, polygonal domain
with Lipschitz boundary $\pOm$ and let $T>0$ be a final time. 
We consider the flow of an ideal isentropic viscous gas governed 
by the \emph{Stokes approximation equations}
\begin{align}
	\partial_t\vr + \Div (\vr \vu) &=0, \quad \text{in }\Dom, \label{eq:contequation}\\
	\partial_t\vu  -  \mu \Delta \vu - \lambda \Grad \Div \vu  + \Grad p(\vr) &= 0, \quad \text{in }\Dom. \label{eq:momentumeq}
\end{align}
Here, the unknowns are the density $\vr = \vr(t,x) >0$ and  velocity $\vu = \vu(t,x) \in \mathbb{R}^N$.
The operators $\Grad$ and $\Div$ are respectively the spatial gradient and divergence operators, and
$\Delta = \Div \Grad$ is the Laplace operator. The viscosity coefficients $\mu$, $\lambda$ 
are assumed to be constant and to satisfy $\mu >0$, $N\lambda + 2\mu \geq 0$.

The pressure is given by \emph{Boyle's law} which in the isentropic
regime takes the form 
$p(\vr)= a\vr^\gamma$, where $a>0$ is constant. In real applications 
the value of $\gamma$ ranges from a maximum of $\frac{5}{3}$ for monoatomic gases, to values 
close to one for polyatomic gases at high temperatures. 
In this paper, we will for purely technical reasons
be forced to require $\gamma > \frac{N}{2}$. 
	
From the point of view of applications, the model \eqref{eq:contequation}--\eqref{eq:momentumeq} can be justified 
for flows at very low Reynolds numbers so that the effects of convection may be neglected.
It is also on the same form as various shallow water models \cite{Lions:1998ga}. 
From a mathematical perspective, the system \eqref{eq:contequation}--\eqref{eq:momentumeq} is 
a model problem containing some, but not all, of the difficulties associated with 
 compressible fluid dynamics.

Mathematical analysis concerning the well-posedness of the system \eqref{eq:contequation}--\eqref{eq:momentumeq}
seems to originate with the papers \cite{KZEN,KZTO} by Kazhikov and collaborators. 
Several other contributions on the existence and long term stability 
exist, also in the context of similar shallow water models.
However, for our purpose here, the most relevant 
study is that of Lions \cite{Lions:1998ga} in which the global existence of (weak) solutions
and some higher regularity results are established.

In this paper we impose the following boundary conditions:
\begin{align}
		\vu \cdot \nu &= 0, \quad \text{on }(0,T)\times \pOm, \label{eq:bc-normal}\\
		\Curl \vu \times \nu & = 0, \quad \text{on }(0,T)\times \pOm, \label{eq:bc-navierslip}
\end{align}
where $\nu$ is the unit outward normal on $\pOm$ and $\Curl$ is the curl operator.
Here, in 2D, $\Curl$ denotes the rotation operator taking vectors into scalars.
The first condition is a natural condition of impermeability type on the normal velocity.
The second condition is in the literature commonly referred to as the Navier--slip condition.
While these boundary conditions are not motivated by physics, they are widely 
used in numerical methods. In particular, in the context of geophysical flows they 
are often preferred over classical Dirichlet conditions since the latter necessitates expensive calculations
of boundary layers. Of more importance to this paper, the boundary conditions \eqref{eq:bc-normal}--
\eqref{eq:bc-navierslip} allow us to pose the system \eqref{eq:contequation}--\eqref{eq:momentumeq}
in mixed form with $\Curl \vu$ as an auxiliary variable. This fact will play a crucial 
role in the upcoming analysis.

While many numerical methods appropriate for the Stokes approximation and 
Navier--Stokes equations have been proposed, the convergence properties 
of these methods are mostly unsettled. In fact, it is not clear 
whether or not any of these methods, in more than one dimension, 
converge to a (weak) solution as discretization parameters tend to zero.
In one dimension, there are some available results due to 
D. Hoff and his collaborators. However, these results apply 
to an ideal gas in Lagrangian coordinates and with initial 
data of bounded variation. In more than one dimension, 
there are some recent results for simplified models. 
In the papers \cite{Gallouet1, Gallouet2}, a convergent finite element method 
for a stationary compressible Stokes system is proposed and analyzed. 
The system considered there are similar to \eqref{eq:contequation}--\eqref{eq:momentumeq}
but without temporal dependence. 
In \cite{Karlsen2}, we established convergence of a finite 
element method for a semi--stationary version of \eqref{eq:contequation}--\eqref{eq:momentumeq} 
($\partial_t \vu = 0$) and homogenous Dirichlet boundary conditions.
This paper can be seen as a continuation of the recent study
\cite{Karlsen1} in which a convergent numerical method 
for the same semi--stationary system (\eqref{eq:contequation}--\eqref{eq:momentumeq} with $\partial_t \vu = 0$)
 with boundary conditions \eqref{eq:bc-normal}--\eqref{eq:bc-navierslip} 
was established. 
The main novelty of this paper is consequently the addition of 
the time derivative term $\partial_t \vu$ in the velocity equation 
\eqref{eq:momentumeq}.

Let us now discuss our choice of numerical method for the Stokes 
approximation equations. 
For the time discretization, we will use implicit time stepping 
in both equations. 
To approximate the continuity equation \eqref{eq:contequation} 
we will use a standard piecewise constant upwind 
discontinuous Galerkin method. 
To approximate the velocity, we will use a mixed finite element 
method with the N{\'e}d{\'e}lec's spaces of the first order and first kind. 
The mixed formulation is motivated by introducing  
the vorticity $\vc{w} = \Curl \vu$ as an auxiliary 
unknown and recasting the velocity equation \eqref{eq:momentumeq} 
in the form:
\begin{equation}\label{eq:vorticity-form}
\partial_t \vu	+ \mu \Curl \vc{w} - (\lambda+\mu) \Grad \Div \vu + \Grad p(\vr) = 0,
\end{equation}
where the identity $-\Delta = \Curl \Curl - \Grad \Div$ is used. 
This leads to a natural mixed formulation in which 
the requirement $\vc{w} = \Curl \vu$ plays the role of a lagrangian 
multiplier. 

Denote by $\Hdiv$ the vector fields $\vc{u}$ on $\Omega$ 
for which $\Div \vc{u}\in L^2$ and $\vc{u} \cdot \nu|_{\partial \Omega} = 0$, and by 
$\Hcurl$ the vector fields $\vc{w}$ on $\Omega$ for which 
$\Curl \vc{w}\in L^2$ and $\vc{w} \times \nu|_{\partial \Omega}=0$. 
We choose corresponding finite element spaces $\vc{V}_{h} \subset \Hdiv$ 
and $\vc{W}_{h}\subset \Hcurl$ based on 
N{\'e}d{\'e}lec's elements of the first order and first kind \cite{Nedelec:1980ec}.
The mixed finite element methods seeks, for each time step $k=1, \ldots, M$, functions 
$(\vw_h^k,\vu_h^k) \in \vc{W}_h \times \vc{V}_h$
such that 
\begin{equation}\label{intro:scheme}
	\begin{split}
		&\int_\Om \Pth{\vu_h^k}\vv_h  + \mu \Curl \vw^k_h \vv_h 
+ \left[(\lambda + \mu)\Div \vu_h^k  - p(\vr_h^k)\Div \vv_h\right]~dx = 0, \\
		&\int_\Om \vw_h^k \vc{\eta}_h -  \vu_h^k\Curl \vc{\eta}_h ~dx = 0,
	\end{split}
\end{equation}
for all $(\vv_h, \vc{\eta}_h)\in \vc{W}_h \times \vc{V}_h$, where $\vr^k_h$ is 
given and $\Pth{\vu_h^k}= (\Delta t)^{-1}[\vu_h^k - \vu_h^{k-1}]$ denotes 
implicit time stepping. Note that the boundary conditions 
\eqref{eq:bc-normal}--\eqref{eq:bc-navierslip} are mandatory to 
obtain this formulation.

Our main result is that $\{(\vw_h, \vu_h, \vr_h)\}_{h>0}$ converges 
 to a weak solution of the \emph{Stokes approximation equations}, at least along a subsequence.
The major difficulty is to obtain strong compactness of 
the density approximation $\{\vr_h\}_{h>0}$ which is needed in order 
to pass to the limit in the nonlinear pressure function.  
Since the density approximations 
are only  bounded in $L^\infty(0,T;L^\gamma(\Om))$ this is intricate.
At the heart of the convergence analysis lies the \emph{effective viscous flux} 
$
	\eff(\vr_h, \vu_h)= p(\vr_h) - (\lambda + \mu)\Div \vu_h.
$
In particular, strong convergence of the density approximation
follows from the property:
\begin{equation}\label{intro:effvisc}
	\lim_{h \rightarrow 0}\int \int \psi \eff(\vr_h, \vu_h) \vr_h~dxdt = \int \int \psi\overline{\eff(\vr, \vu)} \vr~dxdt, 
\end{equation}
for all $\psi \in C_c^\infty(0,T)$. 
It is in the process of obtaining \eqref{intro:effvisc} that 
the carefully selected finite element spaces and mixed form 
prove useful. Specifically, we obtain \eqref{intro:effvisc} by
setting $\vc{v}_h = \Pi_h^V \Grad \Delta^{-1}\vr_h$ in \eqref{intro:scheme}, 
where $\Pi_h^V$ is the canonical interpolation operator into $\vc{V}_h$. 
This test function satisfies $\Div \vv_h = \vr_h$ and 
is almost orthogonal to curls. 
The main difficult in obtaining \eqref{intro:effvisc} is to
treat the time derivative term, which,
with $\vc{v}_h$ as described above, is of the form
\begin{equation*}
	\begin{split}
		\int\int \Pth{\vu_h} \Pi_h^V\Grad \Delta^{-1}[\vr_h]~dxdt
		&=  \int\int \Pth{\vu_h} \Grad \Delta^{-1}[\vr_h]~dxdt + O(h)	 \\
		&= \int \int \Delta^{-1}\left[\Div \vu_h\right]\Pth{\vr_h}~dxdt + O(h).
	\end{split}
\end{equation*}
Using the continuity scheme, the last term 
the last term can be shown to converge. The property \eqref{intro:effvisc}
then follows.
Our analysis  resembles that of Lions and Feireisl for 
the compressible Navier--Stokes equations. 

As part of the analysis, we will need that the discrete velocity 
$\vu_h$ converges strongly to a function $\vu$. 
This is not immediate since the  approximation space $\vc{V}_h$ is 
only $\Div$ conforming. To obtain strong convergence, we utilize the discrete Hodge
decomposition $\vc{V}_h = \Curl \vc{W}_h + \vc{V}^{0,\perp}_h$
satisfied by the chosen N{\'e}d{\'e}lec spaces. 
When writing $\vu_h = \Curl \vc{\zeta}_h + \vc{z}_h$, it can be seen 
that $\vc{\zeta}_h$ does not depend on the density $\vr_h$ and as 
a consequence converges strongly. The remaining term $\vc{z}_h$ 
is then  weakly discrete curl free with bounded divergence and 
an estimate from the previous paper \cite{Karlsen1} yields
\begin{equation}\label{eq:jassa1}
\|\vc{z}_h(t, x)- \vc{z}_h(t, x - \xi)\|_{L^2(0,T;\vc{L}^2(\Om))} \rightarrow 0, ~\text{ as }|\xi| \rightarrow 0,
\end{equation}
uniformly in $h$. From the velocity scheme, we deduce 
a weak time-continuity estimate of the form
\begin{equation}\label{eq:jassa2}
	\Pth{\vc{z}_h} \in L^1(0,T;W^{-1,1}(\Om)),
\end{equation}
independently of $h$.
The two estimates \eqref{eq:jassa1} and \eqref{eq:jassa2} tells us that $\vc{z}_h$
satisfies the hypotheses of an Aubin--Lions type lemma (see Lemma \ref{lemma:aubinlions} below for details).
Strong convergence of $\vc{z}_h$ follows from this lemma. 

The paper is organized as follows: In Section \ref{sec:prelim}, we introduce notation and 
list some basic results needed for the later analysis. 
Moreover, we recall the usual notion of weak solution 
and introduce a mixed weak formulation of the velocity equation. 
Finally, we introduce the finite element spaces and review some of their basic properties. 
In Section \ref{sec:numerical-method}, we present the numerical method and state our main 
convergence result. Section \ref{sec:basic-est} is 
devoted to deriving basic estimates.
In Section \ref{sec:higherint}, we establish higher integrability 
of the density.  
Finally, in Section \ref{sec:conv}, we prove the main convergence 
result stated in Section \ref{sec:numerical-method}. 
The proof is divided into several steps (subsections), including 
convergence of the continuity scheme, weak continuity 
of the discrete viscous flux, strong convergence 
of the density approximations, and convergence of the velocity scheme.

\section{Preliminary material}\label{sec:prelim}
We will write $W^{m,p}(\Om)$ for the Sobolev space of functions
with derivatives of all orders up to $m$ belonging to the space
$L^p(\Om)$. To distinguish between scalar and vector functions, we will write 
vector functions with a bold face. Similarly, a functions space written 
in bold face denotes the vector analog of the corresponding scalar space. 

We make frequent use of the divergence and curl operators and 
denote these by $\Div$ and $\Curl$, respectively.
In the 2D case, we will denote both the rotation operator taking 
scalars into vectors 
and the curl operator taking vectors into scalars by $\Curl$.

We will  make use of the spaces
\begin{align*}
	L^2_0(\Om) &= \Set{\phi \in L^2(\Om): \int_\Om \phi~dx = 0}, \\
	\vc{W}^{\text{div}, 2}(\Om) & = 
	\Set{\vc{v} \in \vc{L}^2(\Omega): \Div \vc{v} \in L^2(\Omega)}, \\
	\vc{W}^{\text{curl},2}(\Om)& = 
	\Set{\vc{v} \in \vc{L}^2(\Omega): \Curl \vc{v} \in \vc{L}^2(\Omega)},
\end{align*}
where $\nu$ denotes the  outward pointing unit normal vector on $\partial \Omega$. 
If $\vc{v}\in \vc{W}^{\text{div}, 2}(\Omega)$ satisfies $\vc{v} \cdot \nu|_{\partial \Omega}=0$, we 
write $\vc{v}\in \Hdiv$. Similarly, $\vc{v}\in \Hcurl$ 
means $\vc{v}\in\vc{W}^{\text{curl}, 2}(\Omega)$ and $\vc{v} \times \nu|_{\partial \Omega} = 0$. 
In two dimensions, $\vc{w}$ is a scalar function and the 
space $\Hcurl$ is to be 
understood as $W_{0}^{1,2}(\Omega)$. To define 
weak solutions, we shall use the space
$$
\mcw(\Om) =\Set{\vc{v}\in  \vc{L}^2(\Omega): 
\Div \vc{v} \in L^2(\Omega), 
\Curl \vc{v} \in \vc{L}^2(\Omega), 
\vc{v} \cdot \nu|_{\partial \Omega}=0},
$$
which coincides with $ \Hdiv \cap \Hcurl$. 
The space $\mcw(\Om)$ is equipped with the norm $\norm{\vc{v}}_{\mcw}^2=\norm{\vc{v}}_{\vc{L}^2(\Omega)}^2
+\norm{\Div \vc{v}}_{\vc{L}^2(\Omega)}^2+\norm{\Curl \vc{v}}_{\vc{L}^2(\Omega)}^2$. 
It is known that $\norm{\cdot}_{\mcw}$ is equivalent to the $\vc{W}^{1,2}$ norm 
on the space $\Set{v\in \vc{W}^{1,2}(\Omega): \vc{v} 
\cdot \nu|_{\partial \Omega}=0}$, see, e.g., \cite{Lions:1998ga}.


For the convenience of the reader we list some basic 
functional analysis results to be utilized (often without mentioning) 
in the subsequent arguments (for proofs, see, e.g.,
\cite{Feireisl:2004oe}). Throughout the paper we 
use overbars to denote weak limits.

\begin{lemma}\label{lem:prelim} 
Let $O$ be a bounded open subset of $\R^M$ with $M\ge 1$.  
Suppose $g\colon \R\to (-\infty,\infty]$ is a lower semicontinuous 
convex function and $\Set{v_n}_{n\ge 1}$ is a sequence of 
functions on $O$ for which $v_n\weakto v$ in $L^1(O)$, $g(v_n)\in L^1(O)$ for each 
$n$, $g(v_n)\weakto \overline{g(v)}$ in $L^1(O)$. Then 
$g(v)\le \overline{g(v)}$ a.e.~on $O$, $g(v)\in L^1(O)$, and
$\int_O g(v)\ dy \le \liminf_{n\to\infty} \int_O g(v_n) \ dy$. 
If, in addition, $g$ is strictly convex on an open interval
$(a,b)\subset \R$ and $g(v)=\overline{g(v)}$ a.e.~on $O$, 
then, passing to a subsequence if necessary, 
$v_n(y)\to v(y)$ for a.e.~$y\in \Set{y\in O\mid v(y)\in (a,b)}$.
\end{lemma}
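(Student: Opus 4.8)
This is a standard "convexity + weak convergence" lemma (essentially a combination of Fatou-type lower semicontinuity and a Visintin-type strict convexity argument). Let me sketch how I would prove it.

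The plan is to prove the three conclusions of the first part in sequence, then handle the strict-convexity statement.

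First, for the lower-semicontinuity statement: I would invoke Mazur's lemma. Since $g(v_n) \weakto \overline{g(v)}$ in $L^1(O)$, by Mazur's lemma there are finite convex combinations $w_m = \sum_{k} \theta_k^m g(v_{n_k})$ converging strongly in $L^1(O)$ to $\overline{g(v)}$; passing to a subsequence they converge a.e. Simultaneously, the same convex combinations $u_m = \sum_k \theta_k^m v_{n_k}$ converge to $v$ in $L^1(O)$ (since $v_n \weakto v$, again via Mazur applied suitably, or just note the convex combinations of a weakly convergent sequence can be taken to converge strongly), hence a.e. along a further subsequence. By convexity of $g$, $g(u_m) \le \sum_k \theta_k^m g(v_{n_k}) = w_m$. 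By lower semicontinuity of $g$, $g(v) \le \liminf_m g(u_m) \le \liminf_m w_m = \overline{g(v)}$ a.e. This gives $g(v) \le \overline{g(v)}$ a.e. In particular $g(v) \in L^1(O)$ once we know $g(v) > -\infty$ a.e.; since $g$ is convex and finite somewhere it is bounded below by an affine function $\ell$, so $g(v) \ge \ell(v) \in L^1(O)$, giving integrability of the negative part, and the bound $g(v)\le \overline{g(v)} \in L^1$ gives the positive part. Finally $\int_O g(v) \le \int_O \overline{g(v)} = \lim \int_O g(v_n)$; but since along any subsequence realizing $\liminf_n \int_O g(v_n)$ we may rerun the argument, we get $\int_O g(v)\ dy \le \liminf_{n\to\infty}\int_O g(v_n)\ dy$ (alternatively this last inequality follows directly from weak-$L^1$ lower semicontinuity of the mass once $g(v_n)$ is equiintegrable, which is implicit in the weak convergence hypothesis).

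For the strict-convexity part: assume now $g$ is strictly convex on $(a,b)$ and $g(v) = \overline{g(v)}$ a.e. The idea is a standard trick using the tangent line. On the set $A = \{y : v(y) \in (a,b)\}$, pick (measurably) a subgradient $\xi(y) \in \partial g(v(y))$, so $g(t) \ge g(v(y)) + \xi(y)(t - v(y))$ for all $t$, with strict inequality whenever $t \ne v(y)$ and both $t, v(y)$ lie in $(a,b)$. Define the nonnegative functions $h_n(y) = g(v_n(y)) - g(v(y)) - \xi(y)(v_n(y) - v(y))$. Testing weak convergence $v_n \weakto v$ and $g(v_n) \weakto \overline{g(v)} = g(v)$ against a truncation of $\xi$ (which can be assumed locally bounded on $A$ after restricting to $A_j = \{y\in A: \dist(v(y), \{a,b\}) > 1/j,\ |v(y)| < j\}$, then using truncation/exhaustion), one finds $\int_{A_j} h_n\ dy \to 0$. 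Since $h_n \ge 0$, $h_n \to 0$ in $L^1(A_j)$, hence a.e. along a subsequence. Strict convexity on $(a,b)$ forces $v_n(y) \to v(y)$ for a.e. $y \in A_j$; a diagonal argument over $j$ then gives pointwise a.e. convergence on all of $A$.

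The main obstacle is the strict-convexity part, specifically the measurability and local-boundedness handling of the subgradient $\xi(y)$: one must choose $\xi$ measurably (e.g. take $\xi(y)$ to be the right derivative $g'_+(v(y))$, which is Borel), and then control the cross term $\int \xi(y)(v_n(y)-v(y))\ dy$, which requires localizing to sets where $\xi$ is bounded and $v$ stays strictly inside $(a,b)$ (so that $v_n$ near $v$ also lands in the strict-convexity region and the pointwise conclusion is valid). The first part is routine once Mazur's lemma is in hand.
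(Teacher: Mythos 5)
The paper does not actually prove this lemma: it is listed among ``basic functional analysis results'' and the reader is referred to \cite{Feireisl:2004oe}, so there is no in-paper argument to compare against. Judged on its own, your sketch is a correct and standard route. For the first part, applying Mazur's lemma to the $\R^2$-valued sequence $(v_n,g(v_n))$, which converges weakly in $L^1(O;\R^2)$, is exactly the right way to get the \emph{same} convex coefficients for both components; combined with Jensen's inequality and lower semicontinuity of $g$ this yields $g(v)\le \overline{g(v)}$ a.e., and the affine minorant (available because $g$ is proper, which is forced by $g(v_n)\in L^1$) gives $g(v)\in L^1$. The integral inequality is even easier than you make it: testing the weak convergence $g(v_n)\weakto\overline{g(v)}$ against $\mathbf{1}_O\in L^\infty(O)$ gives $\lim_n\int_O g(v_n)=\int_O\overline{g(v)}\ge\int_O g(v)$ outright, so no subsequence rerun is needed. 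The second part via the nonnegative defect $h_n=g(v_n)-g(v)-\xi\,(v_n-v)$ with $\xi(y)=g'_+(v(y))$ (Borel, and bounded on each $A_j$ because a convex function finite on $(a,b)$ is locally Lipschitz there) is the classical Visintin-type argument and works.

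One point in the second part deserves tightening. You write that the subgradient inequality is strict ``whenever $t\ne v(y)$ and both $t,v(y)$ lie in $(a,b)$,'' and later that one must arrange matters ``so that $v_n$ near $v$ also lands in the strict-convexity region.'' You cannot assume a priori that $v_n(y)\in(a,b)$, and you must not need to: the correct observation is that for fixed $y\in A_j$ the function $\psi(t)=g(t)-g(v(y))-\xi(y)(t-v(y))$ is convex, nonnegative, vanishes at $v(y)$, and is strictly positive at \emph{every} $t\ne v(y)$ --- if $\psi(t_0)=0$ for some $t_0\ne v(y)$, convexity forces $\psi\equiv 0$ on the segment between $v(y)$ and $t_0$, hence $g$ is affine on a nonempty open subinterval of $(a,b)$, contradicting strict convexity. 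Monotonicity of $\psi$ on either side of its minimum then gives $\inf_{|t-v(y)|\ge\epsilon}\psi(t)>0$, so $h_n(y)\to 0$ forces $v_n(y)\to v(y)$ with no constraint on where $v_n(y)$ lives. With that one-line repair, the diagonal extraction over the exhaustion $A=\bigcup_j A_j$ completes the proof as you describe.
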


Let $X$ be a Banach space and denote by $X^\star$ its dual.  The space
$X^\star$ equipped with the weak-$\star$ topology is denoted by
$X^\star_{\mathrm{weak}}$, while $X$ equipped with the weak topology
is denoted by $X_{\mathrm{weak}}$. By the Banach-Alaoglu theorem, a
bounded ball in $X^\star$ is $\sigma(X^\star,X)$-compact.  If $X$
separable, then the weak-$\star$ topology is metrizable on bounded
sets in $X^\star$, and thus one can consider the metric space
$C\left([0,T];X^\star_{\mathrm{weak}}\right)$ of functions $v:[0,T]\to
X^\star$ that are continuous with respect to the weak topology. We
have $v_n\to v$ in $C\left([0,T];X^\star_{\mathrm{weak}}\right)$ if
$\langle v_n(t),\phi \rangle_{X^\star,X}\to \langle v(t),\phi
\rangle_{X^\star,X}$ uniformly with respect to $t$, for any $\phi\in
X$. The following lemma is a consequence of the Arzel\`a-Ascoli
theorem:

\begin{lemma}\label{lem:timecompactness}
Let $X$ be a separable Banach space, and suppose $v_n\colon [0,T]\to
X^\star$, $n=1,2,\dots$, is a sequence for which 
$\norm{v_n}_{L^\infty([0,T];X^\star)}\le C$, for some constant $C$ independent of $n$. 
Suppose the sequence $[0,T]\ni t\mapsto \langle v_n(t),\Phi \rangle_{X^\star,X}$, $n=1,2,\dots$, 
is equi-continuous for every $\Phi$ that belongs to a dense subset of $X$.  
Then $v_n$ belongs to $C\left([0,T];X^\star_{\mathrm{weak}}\right)$ for every
$n$, and there exists a function $v\in 
C\left([0,T];X^\star_{\mathrm{weak}}\right)$ such that along a 
subsequence as $n\to \infty$ there holds $v_n\to v$ in 
$C\left([0,T];X^\star_{\mathrm{weak}}\right)$.
\end{lemma}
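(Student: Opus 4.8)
The plan is to run a standard Arzel\`a--Ascoli argument: test against a countable dense family, extract a subsequence by a Cantor diagonal procedure, and then pass to all of $X$ by an $\varepsilon/3$ estimate, using the uniform bound $C$ to absorb density errors. Since every $v_n(t)$ lives in the ball of radius $C$ in $X^\star$, and this ball is metrizable for the weak-$\star$ topology by separability of $X$, all the limiting objects will automatically lie in the metric space $C([0,T];X^\star_{\mathrm{weak}})$.

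First I would dispatch the preliminary claim that each $v_n$ belongs to $C([0,T];X^\star_{\mathrm{weak}})$, and at the same time upgrade the equi-continuity hypothesis to \emph{all} of $X$. Fix $\Phi\in X$ and $\varepsilon>0$, pick $\Psi$ in the given dense subset with $\norm{\Phi-\Psi}_X\le\varepsilon/(3C)$, and write, for $s,t\in[0,T]$,
\[
\abs{\langle v_n(t),\Phi\rangle-\langle v_n(s),\Phi\rangle}
\le 2C\norm{\Phi-\Psi}_X+\abs{\langle v_n(t),\Psi\rangle-\langle v_n(s),\Psi\rangle}.
\]
The last term is $<\varepsilon/3$ once $|t-s|$ is small enough, uniformly in $n$, by equi-continuity at $\Psi$; this shows both that $t\mapsto\langle v_n(t),\Phi\rangle$ is continuous (so $v_n\in C([0,T];X^\star_{\mathrm{weak}})$) and that the family $\Set{\langle v_n(\cdot),\Phi\rangle}_n$ is equi-continuous for every $\Phi\in X$.

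Next, using separability of $X$, fix a countable set $\Set{\Phi_k}_{k\ge1}$ dense in $X$. For each $k$ the scalar functions $t\mapsto\langle v_n(t),\Phi_k\rangle$ are bounded by $C\norm{\Phi_k}_X$ and equi-continuous, so Arzel\`a--Ascoli gives a uniformly convergent subsequence; a diagonal extraction then yields a single subsequence, still denoted $\Set{v_n}$, with $\langle v_n(\cdot),\Phi_k\rangle$ converging uniformly on $[0,T]$ for every $k$. A further $\varepsilon/3$ estimate --- approximate an arbitrary $\Phi\in X$ by some $\Phi_k$, bound the error by $2C\norm{\Phi-\Phi_k}_X$, and invoke uniform convergence at $\Phi_k$ --- shows $\langle v_n(\cdot),\Phi\rangle$ is uniformly Cauchy on $[0,T]$ for every $\Phi\in X$; call its uniform limit $\ell_\Phi(\cdot)$. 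Then for each $t$ the map $\Phi\mapsto\ell_\Phi(t)$ is linear with $\abs{\ell_\Phi(t)}\le C\norm{\Phi}_X$ (being a pointwise limit of the functionals $\langle v_n(t),\cdot\rangle$), hence defines $v(t)\in X^\star$ with $\norm{v(t)}_{X^\star}\le C$ and $\langle v(t),\Phi\rangle=\ell_\Phi(t)$; each $\ell_\Phi$ is continuous as a uniform limit of continuous functions, so $v\in C([0,T];X^\star_{\mathrm{weak}})$, and the uniform convergence $\langle v_n(\cdot),\Phi\rangle\to\langle v(\cdot),\Phi\rangle$ for all $\Phi\in X$ is precisely $v_n\to v$ in $C([0,T];X^\star_{\mathrm{weak}})$.

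The only genuinely delicate point --- and the step I expect to require the most care --- is the $\varepsilon/3$ bookkeeping: equi-continuity is assumed only on a dense subset and the countable family is only dense, so at each transition one must first spend the uniform bound $C$ to control the density error and only then invoke equi-continuity or the already-established uniform convergence. Everything else is routine Arzel\`a--Ascoli plus diagonalization.
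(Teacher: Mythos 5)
Your proof is correct and is precisely the Arzel\`a--Ascoli argument (density upgrade, diagonal extraction over a countable dense family, and a final $\varepsilon/3$ passage to all of $X$) that the paper invokes without proof, deferring to the literature. The only point worth noting is that the hypothesis $\norm{v_n}_{L^\infty([0,T];X^\star)}\le C$ strictly speaking bounds $v_n(t)$ only for a.e.\ $t$, whereas your estimates use the bound at every $t$; this is harmless in the paper's applications (the approximations are piecewise constant in time) and is the standard reading of the lemma.
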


In what follows, we will often obtain a priori estimates for a sequence $\Set{v_n}_{n\ge 1}$ 
that we write as ``$v_n\inb X$'' for some functional space $X$. What this really means is that 
we have a bound on $\norm{v_n}_X$ that is independent of $n$.

The following discrete version of a lemma due to Lions \cite[Lemma 5.1]{Lions:1998ga} 
will prove useful in the convergence analysis. A proof of this lemma can be found in \cite{Karlsen2}.
\begin{lemma}\label{lemma:aubinlions}
Given $T>0$ and a small number $h>0$, write 
$(0,T] = \cup_{k=1}^M(t_{k-1}, t_{k}]$ with $t_{k} = hk$ and $Mh = T$. 
Let $\{f_{h}\}_{h>0}^\infty$, $\{g_{h}\}_{h>0}^\infty $ be two sequences such that:
\begin{enumerate}
\item{} 
the mappings $t \rightarrow g_{h}(t,x)$ and $t \rightarrow f_{h}(t,x)$ 
are constant on each interval $(t_{k-1}, t_{k}],\ k=1, \ldots, M$.

\item{}$\{f_{h}\}_{h>0}$ and $\{g_{h}\}_{h>0}$ converge weakly to 
$f$ and $g$ in $L^{p_{1}}(0,T;L^{q_{1}}(\Om))$ and 
$L^{p_{2}}(0,T;L^{q_{2}}(\Om))$, respectively, where $1 < p_{1},q_{1}< \infty$ and
$$
\frac{1}{p_{1}} + \frac{1}{p_{2}} = \frac{1}{q_{1}} + \frac{1}{q_{2}} = 1.
$$

\item{}
the discrete time derivative satisfies
$$
\frac{g_{h}(t,x) - g_{h}(t-h,x)}{h} \in_{b} L^1(0,T;W^{-1,1}(\Om))
$$

\item{}$\|f_{h}(t,x) - f_{h}(t,x-\xi)\|_{L^{p_{2}}(0,T;L^{q_{2}}(\Om))} 
\rightarrow 0$ as $|\xi|\rightarrow 0$, uniformly in $h$.
\end{enumerate}

Then $g_{h}f_{h} \weak gf$ in the sense of distributions on $\Dom$.
\end{lemma}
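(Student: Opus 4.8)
The plan is to fix a test function $\psi\in C_c^\infty(\Dom)$ — establishing $\int_0^T\int_\Om\psi\,g_hf_h\,\dxdt\to\int_0^T\int_\Om\psi\,gf\,\dxdt$ is exactly the assertion $g_hf_h\weak gf$ in $\mathcal{D}'(\Dom)$ — and to compare $f_h$ with its mollification in the space variable only. Fix a standard mollifier $\omega_\delta$ supported in the ball of radius $\delta$ and set $f_h^\delta=f_h\ast_x\omega_\delta$, $f^\delta=f\ast_x\omega_\delta$. Since $f_h-f_h^\delta=\int\omega_\delta(\xi)\bigl(f_h(\cdot,\cdot)-f_h(\cdot,\cdot-\xi)\bigr)\,d\xi$, Minkowski's inequality together with hypothesis (4) gives $\|f_h-f_h^\delta\|_{L^{p_2}(0,T;L^{q_2}(\Om))}\le\sup_{|\xi|\le\delta}\|f_h(t,x)-f_h(t,x-\xi)\|_{L^{p_2}(0,T;L^{q_2}(\Om))}=:\varepsilon(\delta)$, with $\varepsilon(\delta)\to0$ as $\delta\to0$ uniformly in $h$, and weak lower semicontinuity of the norm yields the same bound for $f-f^\delta$. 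By H\"older's inequality and hypothesis (2), $\bigl|\int_0^T\int_\Om\psi\,g_h(f_h-f_h^\delta)\,\dxdt\bigr|\le\|\psi\|_{L^\infty}\,\bigl(\sup_h\|g_h\|_{L^{p_1}(0,T;L^{q_1}(\Om))}\bigr)\,\varepsilon(\delta)$, and likewise for $g(f-f^\delta)$. Hence it suffices to show, for each fixed $\delta>0$, that $\int_0^T\int_\Om\psi\,g_hf_h^\delta\,\dxdt\to\int_0^T\int_\Om\psi\,gf^\delta\,\dxdt$, and then let $\delta\to0$.

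The heart of the matter, and the step I expect to be the main obstacle, is to convert the discrete time-derivative bound (3), which lives in the very weak space $W^{-1,1}(\Om)$, into genuine strong compactness of $\{g_h\}$; precisely, I would prove $g_h\to g$ strongly in $L^{p_1}(0,T;W^{-1,1}(\Om))$. First mollify $g_h$ in space as well, $g_h^\sigma=g_h\ast_x\omega_\sigma$ (after multiplying by a spatial cutoff equal to $1$ on the $x$-support of $\psi$ and vanishing near $\pOm$, which is harmless in view of the reduction above), and use the standard estimate $\|v-v\ast_x\omega_\sigma\|_{W^{-1,1}}\lesssim\sigma\|v\|_{L^1}$ to obtain $\|g_h-g_h^\sigma\|_{L^{p_1}(0,T;W^{-1,1}(\Om))}\le C\sigma$ uniformly in $h$. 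For $\sigma$ fixed, Young's inequality upgrades (2) and (3) to $g_h^\sigma\inb L^{p_1}(0,T;L^\infty(\Om))$ and $\Pth{g_h^\sigma}\inb L^1(0,T;L^\infty(\Om))$. Because the sequences are piecewise constant in $t$, the latter bound is exactly a uniform bound on $\sum_k\|g_h^\sigma(t_k)-g_h^\sigma(t_{k-1})\|_{L^\infty(\Om)}$, i.e. on the total variation in time; combined with the $L^{p_1}$-in-time bound it produces a uniform pointwise-in-$t$ bound, hence a uniform bound in $BV(0,T;L^\infty(\Om))$. Since $L^\infty(\Om)\hookrightarrow\hookrightarrow W^{-1,1}(\Om)$ on the bounded Lipschitz domain $\Om$, Helly's selection theorem provides a subsequence along which $g_h^\sigma(t)\to g\ast_x\omega_\sigma$ in $W^{-1,1}(\Om)$ for every $t\in[0,T]$, and then $g_h^\sigma\to g\ast_x\omega_\sigma$ in $L^{p_1}(0,T;W^{-1,1}(\Om))$ by dominated convergence. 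A diagonal argument over $\sigma=1/n$, together with uniqueness of the limit, yields $g_h\to g$ in $L^{p_1}(0,T;W^{-1,1}(\Om))$.

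With this compactness available I would finish as follows. For $\delta$ fixed, Young's inequality gives $\|f_h^\delta(t)\|_{W^{1,\infty}(\Om)}\le C_\delta\|f_h(t)\|_{L^{q_2}(\Om)}$, so $\psi f_h^\delta$ is bounded in $L^{p_2}(0,T;W_0^{1,\infty}(\Om))$; moreover $f_h^\delta\weak f^\delta$ in $L^{p_2}(0,T;L^{q_2}(\Om))$ since $f_h\weak f$ there. Writing, in the $W^{-1,1}$--$W_0^{1,\infty}$ duality pairing,
\[
\int_0^T\!\!\int_\Om\psi\,\bigl(g_hf_h^\delta-gf^\delta\bigr)\,\dxdt=\int_0^T\langle g_h-g,\psi f_h^\delta\rangle\,\dt+\int_0^T\!\!\int_\Om g\,\psi\,(f_h^\delta-f^\delta)\,\dxdt,
\]
the first term is bounded by $\|g_h-g\|_{L^{p_1}(0,T;W^{-1,1}(\Om))}\,\|\psi f_h^\delta\|_{L^{p_2}(0,T;W^{1,\infty}(\Om))}$ and tends to $0$ by the previous paragraph, while the second tends to $0$ because $g\psi\in L^{p_1}(0,T;L^{q_1}(\Om))$ and $f_h^\delta\weak f^\delta$ in the conjugate space, where one uses $\tfrac1{p_1}+\tfrac1{p_2}=\tfrac1{q_1}+\tfrac1{q_2}=1$. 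This establishes the reduced limit for every $\delta$, and sending $\delta\to0$ completes the proof. Beyond the compactness step for $g_h$, every estimate is H\"older's inequality or a routine mollifier property; the discrete structure enters only through the passage, in the second paragraph, from a bounded discrete time-variation valued in $W^{-1,1}(\Om)$ to a $BV(0,T;L^\infty(\Om))$ bound after spatial regularization.
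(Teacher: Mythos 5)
The paper does not actually prove this lemma in the text --- it is quoted as a discrete version of Lions' Lemma 5.1 \cite{Lions:1998ga} with the proof deferred to \cite{Karlsen2} --- and your argument is precisely the standard double--mollification proof used there: reduce to the spatially mollified $f_h^\delta$ via hypothesis (4), upgrade hypothesis (3) to strong convergence of $g_h$ in $L^{p_1}(0,T;W^{-1,1}(\Om))$ by mollifying in space and running a BV--in--time/Helly compactness argument (the right tool here, since an $L^1$-in-time bound on the discrete derivative gives bounded variation but not equicontinuity), and conclude through the $W^{-1,1}$--$W_0^{1,\infty}$ duality pairing. The argument is correct; the only remark worth making is that you tacitly read hypothesis (2) with the roles of the exponents swapped ($f_h$ bounded in $L^{p_2}(0,T;L^{q_2}(\Om))$ and $g_h$ in $L^{p_1}(0,T;L^{q_1}(\Om))$), which is the reading forced by hypothesis (4) and the conjugacy relations --- i.e.\ you have silently corrected a typo in the statement rather than introduced a gap.
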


\subsection{Weak and renormalized solutions}

\begin{definition}[Weak solutions]\label{def:weak}

We say that a pair $(\varrho,\vc{u})$ of functions constitutes a weak solution
of the Stokes approximation equations \eqref{eq:contequation}--\eqref{eq:momentumeq} 
with initial data 
$$
	(\vr^0, \vu^0) \in L^\gamma(\Om)\times \vc{L}^2(\Om), \quad \gamma > \frac{N}{2},
$$ 
and Navier-slip type boundary conditions 
\eqref{eq:bc-normal}--\eqref{eq:bc-navierslip}, 
provided the following conditions hold:

\begin{enumerate}
	\item $(\varrho,\vc{u}) \in L^\infty(0,T;L^\gamma(\Omega))\times L^2(0,T;\mcw)\cap L^\infty(0,T;\vc{L}^2(\Om))$;

	\item $\varrho_{t} + \Div (\varrho\vc{u}) = 0$ in the weak 
	sense, i.e, $\forall \phi \in C^\infty([0,T)\times\overline{\Omega})$,
	\begin{equation}\label{eq:weak-rho}
		\int_{0}^T\int_{\Omega}\varrho \left( \phi_{t} + \vc{u}\Grad \phi\right)\ dxdt
		+ \int_{\Omega}\vrho^0\phi|_{t=0}\ dx = 0;
	\end{equation}
		
	\item $\vc{u}_{t}-\mu \Delta \vc{u} - \lambda \Grad \Div\vc{u} + \Grad p(\varrho) =0$ weakly,
	i.e, $\forall \vc{\phi} \in \vc{C}^\infty([0,T)\times\overline{\Omega})$ for which 
	$\vc{\phi} \cdot \nu = 0$ on $(0,T)\times \partial \Omega$,
	\begin{equation}\label{eq:weak-u}
		\begin{split}
			&\int_{0}^T\int_{\Omega}-\vc{u}\vc{\phi_{t}} + \mu\Curl \vc{u} \Curl \vc{\phi} \\
			&\qquad \qquad + \left[(\mu + \lambda)\Div \vc{u}-p (\varrho)\right]\Div \vc{\phi}\ dxdt 
			=\int_{\Omega}\vc{u}^{0}\vc{\phi}|_{t=0} \ dx.			
		\end{split}
	\end{equation}
\end{enumerate}
\end{definition}

For the convergence analysis we shall also need the DiPerna-Lions concept of 
renormalized solutions of the continuity equation.  

\begin{definition}[Renormalized solutions]
\label{renormlizeddef}
Given $\vc{u}\in L^2(0,T;\mcw(\Om))$, we say that $\vrho\in  L^\infty(0,T;L^\gamma(\Omega))$ 
is a renormalized solution of \eqref{eq:contequation} provided 
$$
B(\vrho)_t + \Div \left(B(\vrho)\vc{u}\right) + b(\vrho)\Div \vc{u}=0
\quad \text{in the weak sense on $\cDom$,}
$$
for any $B\in C[0,\infty)\cap C^1(0,\infty)$ with 
$B(0)=0$ and $b(\vrho) := \vrho B'(\vrho) - B(\vrho)$.
\end{definition}

We shall need the following lemma. A proof can be found in \cite{Karlsen1}.
\begin{lemma}\label{lemma:feireisl}
Suppose $(\vrho,\vc{u})$ is a weak solution according to Definition \ref{def:weak}.
If $\vrho \in L^2((0,T)\times \Omega))$, then $\vrho$ is a renormalized solution 
according to Definition \ref{renormlizeddef}.
\end{lemma}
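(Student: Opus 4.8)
The plan is to follow the classical DiPerna--Lions regularization argument, adapted to the bounded convex domain $\Om$ and the no-flux condition $\vu\cdot\nu|_{\pOm}=0$. First I would record the regularity at hand: since $\norm{\cdot}_{\mcw}$ is equivalent to the $\vc{W}^{1,2}$ norm on vector fields with vanishing normal trace, the weak solution satisfies $\vu\in L^2(0,T;\vc{W}^{1,2}(\Om))$, while $\vr\in L^\infty(0,T;L^\gamma(\Om))\cap L^2(\Dom)$ by hypothesis, and $\vr,\vu$ solve \eqref{eq:contequation} in the sense of distributions on $\Dom$ (this is \eqref{eq:weak-rho} restricted to test functions supported away from $t=0$).

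Next I would mollify in the spatial variable. Let $S_\varepsilon$ denote a Friedrichs mollifier acting on $x$ and set $\vr_\varepsilon:=S_\varepsilon\ast\vr$. On any subdomain $\Om'\Subset\Om$ and for $\varepsilon$ small, $\vr_\varepsilon$ is smooth in $x$ and satisfies
\[
   \Pt\vr_\varepsilon + \Div(\vr_\varepsilon\vu) = r_\varepsilon, \qquad r_\varepsilon := \Div(\vr_\varepsilon\vu) - S_\varepsilon\ast\Div(\vr\vu),
\]
in $\Dp((0,T)\times\Om')$, where $r_\varepsilon$ is the Friedrichs commutator. By the commutator lemma of Lions, using $\vr\in L^2(\Dom)$ and $\vu\in L^2(0,T;\vc{W}^{1,2}(\Om))$, one has $r_\varepsilon\to 0$ in $L^1(0,T;L^1_{\mathrm{loc}}(\Om))$ as $\varepsilon\to 0$. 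Since $\vr_\varepsilon$ is smooth in $x$ and $\Pt\vr_\varepsilon=-\Div(\vr_\varepsilon\vu)+r_\varepsilon\in L^1_{\mathrm{loc}}$, the chain rule (applied in $t$ pointwise in $x$, then rearranging the spatial terms) gives, for any $B\in C^1[0,\infty)$ with $B'$ bounded and $B(0)=0$,
\[
   \Pt B(\vr_\varepsilon) + \Div\big(B(\vr_\varepsilon)\vu\big) + b(\vr_\varepsilon)\Div\vu = B'(\vr_\varepsilon)\,r_\varepsilon,
\]
with $b(\vr_\varepsilon)=\vr_\varepsilon B'(\vr_\varepsilon)-B(\vr_\varepsilon)$. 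Passing $\varepsilon\to0$: along a subsequence $\vr_\varepsilon\to\vr$ in $L^2(\Dom)$ and a.e., so $B(\vr_\varepsilon)\to B(\vr)$ and $b(\vr_\varepsilon)\to b(\vr)$ in $L^2(\Dom)$ by dominated convergence (linear growth of $B$ and $b$), hence $b(\vr_\varepsilon)\Div\vu\to b(\vr)\Div\vu$ in $L^1(\Dom)$; and $\abs{B'(\vr_\varepsilon)r_\varepsilon}\le\norm{B'}_{\infty}\abs{r_\varepsilon}\to0$ in $L^1_{\mathrm{loc}}$. This yields the renormalized identity in $\Dp(\Dom)$ for all such $B$.

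Finally, two extensions. To upgrade the identity from interior test functions to test functions in $C^\infty([0,T)\times\cOm)$, i.e.\ to $\cDom$ with $\phi$ not vanishing near $\pOm$, I would exploit the convexity of $\Om$: dilating $\vr$ and $\vu$ toward an interior point before mollifying produces approximants defined on a neighbourhood of $\cOm$, and the no-flux condition $\vu\cdot\nu|_{\pOm}=0$ forces the boundary flux term $B(\vr)\,\vu\cdot\nu$ to vanish, so no boundary contribution survives in the limit; the initial-time term is recovered from \eqref{eq:weak-rho} in the usual way. To reach a general admissible $B\in C[0,\infty)\cap C^1(0,\infty)$ with $B(0)=0$, I would approximate $B$ by functions with bounded derivative (truncating $B'$ and cutting off near $\vr=0$), using $\vr\in L^\infty(0,T;L^\gamma(\Om))$ with $\gamma>\tfrac{N}{2}$ together with $\vr\in L^2(\Dom)$ to control $B(\vr)$ and $b(\vr)$, then pass to the limit by dominated convergence. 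The main obstacle is the Friedrichs commutator estimate $r_\varepsilon\to0$, which is precisely where the $L^2$-integrability of $\vr$ and the $\vc{W}^{1,2}$-regularity of $\vu$ are essential; the boundary argument is the second, more technical, point, but it is routine given convexity and the no-flux condition.
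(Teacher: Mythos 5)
Your argument is correct and is precisely the classical DiPerna--Lions/Friedrichs-commutator proof that the paper itself does not reproduce but delegates to \cite{Karlsen1}; the hypotheses $\vrho\in L^2(\Dom)$ and $\vu\in L^2(0,T;\vc{W}^{1,2}(\Om))$ (via the equivalence of the $\mcw$-norm with the $\vc{W}^{1,2}$-norm) are exactly what the commutator lemma requires. Your treatment of the boundary by dilation in the convex domain together with the no-flux condition, and of general $B$ by truncation, matches the standard treatment in the cited reference and in Feireisl's book.
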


\subsection{A mixed formulation}
In view of the Navier--slip boundary condition \eqref{eq:bc-navierslip}
the velocity equation \eqref{eq:momentumeq}
admits the following mixed weak formulation, which we will use
to design a mixed finite element method: Determine functions
$$
(\vc{w},\vc{u}) \in L^2(0,T;\Hcurl)
\times L^2(0,T;\Hdiv)
$$
such that
\begin{equation}\label{def:mixed-weak}
	\begin{split}
		& \int_{0}^T \int_{\Omega}- \vc{u}\vc{v}_{t} + \mu\Curl \vc{w} \vc{v}
		+ \left[(\mu + \lambda)\Div \vc{u} - p(\varrho)\right]\Div \vc{v}\ dxdt
		= \int_{\Omega}\vc{u}^{0}\vc{v}|_{t=0}\ dx, \\
		& \int_{0}^T\int_{\Omega} \vc{w}\vc{\eta}-\Curl \vc{\eta} \vc{u}\ dxdt = 0, 
	\end{split}
\end{equation}
for all $(\vc{\eta},\vc{v}) \in  L^2(0,T;\Hcurl)
\times L^2(0,T;\Hdiv) \cap W^{1,2}(0,T;\vc{L}^2(\Omega))$. 

Note that if $(\vw, \vu,\vr)$ is a triple satisfying  the mixed formulation \eqref{def:mixed-weak}
then the pair $(\vu, \vr)$ satisfies the weak formulation \eqref{def:weak} \cite{Karlsen1}.


\subsection{Finite Element spaces and basic results}
Throughout this paper, $\{E_{h}\}_{h}$ denotes a shape regular 
family of tetrahedral meshes of $\Omega$,  where $h$ is the maximal diameter.
By shape regular we mean that there exists a constant 
$\kappa > 0$ such that every $E \in E_{h}$ contains a ball of radius 
$\lambda_{E} \geq \frac{h_{E}}{\kappa}$, where $h_{E}$ is the diameter of $E$. 
For each fixed $h>0$, we let $\Gamma_h$ denote the set of faces in $E_h$
and $\mathcal{V}_h$ the set of edges. In two dimensions, 
$\Gamma_h$ is the set of edges and $\mathcal{V}_h$ the 
set of vertices. We will use $\mathbb{P}_j^k(E)$ to 
denote the space of vector polynomials on $E$ with $l$ components
and maximal order $k$.

To approximate the vorticity $\vc{w}$, we will 
use the curl-conforming N\'ed\'elec space of the first
order and kind \cite{Nedelec:1980ec}:
\begin{equation}\label{eq:Wh-def}
	\begin{split}
		\vc{W}_h(\Om)= &\left\{ \vc{w} \in \vc{W}^{\operatorname{curl},2}_0(\Om):
			~\vc{w}|_E \in \vc{W}(E), \,\forall E \in E_h,\right. \\
		&\qquad\qquad\qquad\quad
			\left.~ \int_e \jump{\vc{w}\cdot t}_e~dS(x) = 0,
			 \,\forall e \in \mathcal{V}_h\right\},		
	\end{split}
\end{equation}
where $t$ is the unit tangential along the edge $e$, $\jump{\cdot}_e$ 
is the jump over the edge $e$, and
\begin{equation*}
	\vc{W}(E) = 
	\begin{cases}
	\mathbb{P}_1^1(E), & N=2, \\	
	\Set{\vc{w} \in  \mathbb{P}^3_1(E): ~\Grad \vc{w} + \Grad\vc{w}^T = 0}, & N=3.
\end{cases}
\end{equation*}
In two dimensions, the continuity requirement $\int_e \jump{\vc{w}\cdot t}~dS(x) = 0$ in \eqref{eq:Wh-def}
is to be understood as continuity at  vertices.
For the velocity $\vu$, we will use the div-conforming
N\'ed\'elec space of the first
order and kind \cite{Nedelec:1980ec}:
\begin{equation*}
	\vc{V}_h(\Om) = \Set{\vc{v} \in \vc{W}^{\operatorname{div},2}_0:
					\, \vc{v}|_E \in \vc{V}(E), ~\forall E \in E_h,
					~\int_\Gamma \jump{\vc{v}\cdot \nu}~dS(x) = 0, ~\forall \Gamma \in \Gamma_h},
\end{equation*}
where
$
\vc{V}(E) = \mathbb{P}_0^N \oplus \mathbb{P}_0^1 \vc{x},
$
and $\jump{\cdot}_\Gamma$ is the jump over $\Gamma$.
The density $\vr$ will be approximated in the space 
of piecewise constants on $E_h$:
\begin{equation*}
	Q_h(\Om) = \Set{q \in L^2(\Om):~q|_E \in \mathbb{P}_0^1(E), ~\forall E \in E_h}.
\end{equation*}

Next, we introduce the canonical interpolation operators: 
\begin{equation*}
	\begin{split}
	&\Pi_{h}^S:W^{1,2}_{0}\cap ~W^{2,2} \rightarrow S_{h}, \quad 
	\Pi_{h}^W:\vc{W}^{\text{curl},2}_0\cap ~\vc{W}^{2,2} \rightarrow \vc{W}_{h}, \\
	&\Pi_{h}^V:\vc{W}^{\text{div},2}_0\cap \vc{W}^{1,2} \rightarrow \vc{V}_{h}, \quad 
	\Pi_{h}^Q: L^2_{0} \rightarrow Q_{h},	
\end{split}
\end{equation*}
using the available degrees of freedom of the involved spaces. That is,
the operators (in three dimensions) are defined by \cite{Nedelec:1980ec,Arnold} 
\begin{equation*}
	\begin{split}
		\left(\Pi_h^S s\right)(x_i) &= s(x_i), \quad \forall x_i \in \mathcal{N}_h; \\
		\int_e \left(\Pi_h^W \vc{w}\right)\times \nu~dS(x) 
			&= \int_e  \vc{w}\times \nu~dS(x), 
				\quad \forall e \in \mathcal{V}_h;\\
		\int_\Gamma \left(\Pi_h^V \vc{v}\right)\cdot \nu~dS(x) &= \int_\Gamma \vc{v} \cdot \nu~dS(x), 
				\quad \forall \Gamma \in \Gamma_h;\\
		\int_E \Pi_h^Q q~dx &= \int_E q ~dx, \quad \forall E \in E_h,
	\end{split}
\end{equation*}
where $\mathcal{N}_h$ it the set of vertices of $E_h$.
It is well known that the following diagram commutes (\cite{Arnold, Arnold:2006wj}):
\begin{equation*}
	{\small 
	\begin{CD}
		 W^{1,2}_{0}\cap W^{2,2} @> 
		\operatorname{grad} >> \vc{W}^{\text{curl},2}_{0}\cap ~\vc{W}^{2,2} @> \operatorname{curl}\ >> 
		\vc{W}^{\text{div}, p}_{0}\cap \vc{W}^{1,2} @> \operatorname{div}\ >> \vc{L}^2_{0}\\
		 @V \Pi_{h}^SVV    @V\Pi_{h}^WVV   @V\Pi_{h}^{V}VV          
		@V\Pi_{h}^QVV  \\ 
		S_{h} @>\operatorname{grad} >> \vc{W}_{h} @> \operatorname{curl}\ >> 
		\vc{V}_{h} @> \operatorname{div}\ >> Q_{h}.
	\end{CD}
	}
\end{equation*}

\begin{remark}
	The interpolation operators $\Pi_h^S$, $\Pi_h^W$, and $\Pi_h^V$, are 
	defined on function spaces with enough regularity to ensure that the 
	corresponding degrees of freedom are functionals on these spaces. 
	This is reflected in writing $\vc{W}^{\text{curl},2} \cap \vc{W}^{2,2}$
	instead of merely $\vc{W}^{\text{curl},2}$ and so on.
\end{remark}
In view of the above commuting diagram, we can define the spaces orthogonal to the range of the previous operator, i.e., 
\begin{align*}
	\vc{W}_{h}^{0,\perp} & := \{\vc{w}_{h} \in \vc{W}_{h}; \Curl \vc{w}_{h} =0 \}^\perp\cap \vc{W}_{h}, \\
	\vc{V}_{h}^{0,\perp} & := \{\vc{v}_{h} \in \vc{V}_{h}; \Div \vc{v}_{h} = 0\}^\perp \cap \vc{V}_{h},
\end{align*}
to obtain decompositions (cf. \cite{Arnold})
\begin{align}
	\vc{W}_{h} &= \Grad S_{h} +\vc{W}_{h}^{0, \perp}, \label{eq:Wh-decomp}
	\\
	\vc{V}_{h}  & = \Curl \vc{W}_{h} +\vc{V}_{h}^{0, \perp}. \label{eq:Vh-decomp}
\end{align}
The following discrete Poincar\'e inequalities hold \cite{Arnold:2006wj}
\begin{align}
	\label{eq:Poincare1}
	\norm{\vc{v}_{h}}_{\vc{L}^2(\Omega)} & \leq 
	C\norm{\Div \vc{v}_{h}}_{L^2(\Omega)},
	\quad \forall \vc{v} \in \vc{V}_{h}^{0,\perp},\\
	\label{eq:Poincare2}
	\norm{\vc{w}_{h}}_{\vc{L}^2(\Omega)} 
	& \leq C\norm{\Curl \vc{w}_{h}}_{L^2(\Omega)},
	\quad \forall \vc{w} \in \vc{W}_{h}^{0,\perp},
\end{align}	
where the constant $C$ is independent of $h$.

In the subsequent convergence analysis, we make frequent 
use of the canonical projection operators. To 
bound these we shall need the following (\cite{Nedelec:1980ec, Brenner})

\begin{lemma}\label{lemma:interpolation}
There exists a constant $C>0$, depending only on the shape 
regularity of $E_h$ and the size of $\Om$, such that for any $1\leq p\leq\infty$,
\begin{align*}
	& \norm{\phi - \Pi_h^Q\phi}_{L^p(\Om)} \leq C h\norm{\Grad \phi}_{\vc{L}^p(\Om)}, \\
	& \norm{\vc{v} - \Pi_h^V \vc{v}}_{\vc{L}^p(\Om)}
	+h\norm{\Div (\vc{v} - \Pi_h^V \vc{v})}_{L^p(\Om)}
	\leq C h^{s}\norm{\Grad ^{s}\vc{v}}_{\vc{L}^p(\Om)}, \quad r=1,2, \\
	& \norm{\vc{w} - \Pi_h^W \vc{w}}_{\vc{L}^p(\Om)}
	+ h\norm{\Curl (\vc{w} - \Pi_h^W \vc{w})}_{\vc{L}^p(\Om)} 
	\leq C h^{s}\|\Grad ^{s}\vc{w}\|_{\vc{L}^p(\Om)}, \quad s=1,2,
\end{align*}
for all $\phi \in W^{1,p}(\Om),\vc{v}\in W^{s,p}(\Om)$, 
and $\vc{w} \in W^{2,p}(\Om)$.
\end{lemma}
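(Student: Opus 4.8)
The plan is to prove all three families of estimates by the classical scaling-plus-Bramble--Hilbert argument, working element by element and then summing over $E_h$. Fix $E\in E_h$ and let $F_E\colon\hat E\to E$, $F_E(\hat x)=B_E\hat x+b_E$, be the affine map from a fixed reference simplex $\hat E$; shape regularity of $\{E_h\}_h$ gives $\norm{B_E}\le Ch_E$, $\norm{B_E^{-1}}\le Ch_E^{-1}$ and $\abs{\det B_E}\sim h_E^N\sim\abs{E}$. The spaces $Q_h$, $\vc{V}_h$ and $\vc{W}_h$ pull back to $\hat E$ under, respectively, the identity, the contravariant Piola transform $\vc{v}\mapsto(\det B_E)\,B_E^{-1}(\vc{v}\circ F_E)$, and the covariant Piola transform $\vc{w}\mapsto B_E^{T}(\vc{w}\circ F_E)$. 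The decisive structural input, which I would quote from \cite{Nedelec:1980ec}, is that the canonical interpolation operators commute with these pullbacks (the interpolant of a transformed field is the transform of the interpolant); hence it suffices to prove the corresponding estimates once and for all on the single reference element $\hat E$.

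On $\hat E$ the local degrees of freedom are bounded linear functionals on $W^{1,p}(\hat E)$ — respectively on $W^{2,p}(\hat E)$ for the curl element — by the trace theorem together with Sobolev embedding, so $\Pi^Q_{\hat E}$, $\Pi^V_{\hat E}$ and $\Pi^W_{\hat E}$ are bounded; moreover each reproduces its local polynomial space, which in all cases contains the constant fields (and additionally $\mathbb{P}_0^1\vc{x}$ for $\vc{V}(\hat E)$ and the affine rotation fields for $\vc{W}(\hat E)$). The Bramble--Hilbert lemma then bounds $\norm{\hat u-\Pi_{\hat E}\hat u}_{L^p(\hat E)}$ — with $\Pi_{\hat E}$ any of the three reference interpolants and $\hat u$ the corresponding field — by the $W^{s,p}(\hat E)$ seminorm at the order $s$ permitted by this polynomial reproduction. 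Mapping back along $F_E$, the $L^p$ norm and the $W^{s,p}$ seminorm each acquire explicit powers of $\norm{B_E}$, $\norm{B_E^{-1}}$ and $\abs{\det B_E}$; a routine bookkeeping of these factors — in which the weight $\det B_E$ carried by the contravariant transform cancels the Jacobian coming from $\dx$ — turns the reference estimates into the claimed local bounds on $E$ with the factor $h_E^s$. Raising to the $p$-th power and summing over $E\in E_h$ using $h_E\le h$ (replacing the sum by a supremum when $p=\infty$) gives the global statements.

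The divergence and curl contributions need no separate work: by the commuting diagram recorded above, $\Div(\vc{v}-\Pi^V_h\vc{v})=\Div\vc{v}-\Pi^Q_h(\Div\vc{v})$ and $\Curl(\vc{w}-\Pi^W_h\vc{w})=\Curl\vc{w}-\Pi^V_h(\Curl\vc{w})$, so the $h$-weighted terms on the left are controlled by the estimates already obtained, now applied to the scalar field $\Div\vc{v}$ and the vector field $\Curl\vc{w}$; this is exactly the mechanism producing the extra factor $h$ in front and the admissible range of the index.

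I expect the only genuinely delicate point to be the scaling bookkeeping for the two Piola transforms: the contravariant map carries the weight $\det B_E$ whereas the covariant one does not, and it is precisely this asymmetry that delivers the correct powers of $h_E$ (and, in the companion inequalities \eqref{eq:Poincare1}--\eqref{eq:Poincare2}, constants uniform in $h$). Checking that each face- or edge-moment degree of freedom is a continuous functional on the stated Sobolev space — a trace-theorem matter — is the other step that must be carried out with some care.
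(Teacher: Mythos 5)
The paper offers no proof of this lemma: it is quoted as a known result with a citation to N\'ed\'elec and to Brenner--Scott, and your argument --- affine pullback with the identity, contravariant and covariant Piola transforms, commutation of the canonical interpolants with these pullbacks, boundedness of the degrees of freedom via the trace theorem, Bramble--Hilbert on the reference element, scaling back, and the commuting diagram for the $h$-weighted $\Div$ and $\Curl$ terms --- is precisely the standard proof those references give. One caveat worth recording: as you yourself phrase it, Bramble--Hilbert yields the error order ``permitted by the polynomial reproduction,'' and the lowest-order spaces $\vc{V}(E)=\mathbb{P}_0^N\oplus\mathbb{P}_0^1\vc{x}$ and $\vc{W}(E)$ do \emph{not} contain all of $\mathbb{P}_1$, so your argument delivers only $s=1$ for the unweighted $L^p$ terms (the $h$-weighted $\Div$ and $\Curl$ terms do reach $O(h^2)$ through the commuting diagram, since $\Pi_h^Q$ and $\Pi_h^V$ applied to $\Div\vc{v}$ and $\Curl\vc{w}$ reproduce constants). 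This is a defect of the lemma as stated rather than of your proof, and only the $s=1$ case is ever invoked in the paper.
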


We will also need the following lemma. 
It follows from scaling arguments and the equivalence of 
finite dimensional norms \cite{Brenner}.
\begin{lemma}\label{lemma:inverse}
There exists a constant $C>0$, such that for $1\leq q,p \leq \infty$,
and $r= 0,1$,
\begin{equation*}
	\norm{\phi_h}_{W^{r,p}(E)} 
	\leq C h^{-r + \frac{N}{p}-\frac{N}{q}}
	\norm{\phi_h}_{L^q(E)}, 
\end{equation*}
for any $E \in E_h$ and all polynomial functions $\phi_h \in \mathbb{P}_k(E)$, $k=0,1,.$.
The constant $C$ depends only on the shape 
regularity of $E_h$ and polynomial degree $k$.
\end{lemma}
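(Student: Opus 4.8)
The plan is to reduce the assertion to a fixed reference element by an affine change of variables and then to invoke the equivalence of all norms on the finite-dimensional space of polynomials of degree at most $k$; this is the ``scaling argument'' alluded to in the statement.

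First I would fix an element $E\in E_h$ and a reference simplex $\hat E\subset\R^N$, and write the affine bijection $F_E\colon\hat E\to E$, $F_E(\hat x)=B_E\hat x+b_E$, with $B_E$ invertible. The shape regularity of $\{E_h\}_h$ provides the standard bounds $\norm{B_E}\le Ch_E$, $\norm{B_E^{-1}}\le Ch_E^{-1}$, and $C^{-1}h_E^N\le\abs{\det B_E}\le Ch_E^N$, where $C$ depends only on $\kappa$ and the chosen reference simplex. I would then pull $\phi_h$ back by setting $\hat\phi:=\phi_h\circ F_E$, which again belongs to $\mathbb{P}_k(\hat E)$. Since $\mathbb{P}_k(\hat E)$ is finite-dimensional, the $W^{r,p}(\hat E)$- and $L^q(\hat E)$-norms are equivalent on it, so $\norm{\hat\phi}_{W^{r,p}(\hat E)}\le\hat C\,\norm{\hat\phi}_{L^q(\hat E)}$ for a constant $\hat C$. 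Writing a generic element of $\mathbb{P}_k(\hat E)$ in a fixed basis, one checks moreover that $\hat C$ may be chosen independently of $p$ and $q$ and depends only on $\hat E$, $k$, and $r\in\{0,1\}$, which is what is needed for the claimed form of the dependence of the final constant.

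It then remains to transfer this inequality back to $E$ via the substitution $x=F_E(\hat x)$. On the right-hand side the change of variables produces a Jacobian factor $\abs{\det B_E}^{1/q}\sim h_E^{N/q}$, so that $\norm{\hat\phi}_{L^q(\hat E)}\le Ch_E^{-N/q}\norm{\phi_h}_{L^q(E)}$, with the convention $N/\infty=0$ when $q=\infty$ (where no Jacobian factor occurs). On the left-hand side, the chain rule expresses the derivatives of $\phi_h$ through those of $\hat\phi$ with a factor controlled by $\norm{B_E^{-1}}^r\sim h_E^{-r}$, while the Jacobian again contributes $\abs{\det B_E}^{1/p}\sim h_E^{N/p}$, giving $\norm{\phi_h}_{W^{r,p}(E)}\le Ch_E^{-r+N/p}\norm{\hat\phi}_{W^{r,p}(\hat E)}$ (with the analogous modification when $p=\infty$). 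Chaining the three estimates yields $\norm{\phi_h}_{W^{r,p}(E)}\le C\hat C\,h_E^{-r+N/p-N/q}\norm{\phi_h}_{L^q(E)}$, and, since $h_E\le h$, the stated bound with $h$ in place of $h_E$.

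I do not anticipate any genuine analytical obstacle: the argument is entirely routine. The only point calling for care is the bookkeeping of the three powers of $h_E$ — the $h_E^{-r}$ coming from differentiation, and the $h_E^{N/p}$ and $h_E^{-N/q}$ coming from the two changes of variables — together with the endpoint conventions when $p$ or $q$ equals $\infty$, in which case the corresponding determinant factor is simply absent.
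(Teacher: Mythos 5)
Your proof is the standard scaling argument that the paper itself invokes (the paper gives no proof, only the remark that the lemma ``follows from scaling arguments and the equivalence of finite dimensional norms'' with a citation to Brenner--Scott), and your bookkeeping of the three factors $h_E^{-r}$, $h_E^{N/p}$ and $h_E^{-N/q}$ is correct. The one caveat is your final step: replacing $h_E$ by $h$ via $h_E\le h$ is only legitimate when the exponent $-r+\frac{N}{p}-\frac{N}{q}$ is nonnegative; in the typical case where it is negative (e.g.\ $r=1$, $p=q$) one needs the reverse bound $h_E\ge c\,h$, i.e.\ quasi-uniformity of the mesh family --- an assumption the paper's own statement of the lemma (with the global $h$) implicitly makes as well.
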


The next result follows from scaling arguments and the trace theorem \cite{Agmonn}
\begin{lemma}\label{lemma:edgebounds}
Fix any $E \in E_{h}$ and let $\phi \in W^{1,2}(E)$ be arbitrary. 
There exists a constant $C>0$, depending only on the shape regularity of $E_h$ such that, 
$$
\|\phi\|_{L^2(\Gamma)} \leq Ch^{-\frac{1}{2}}\left(\|\phi\|_{L^2(E)} + h\|\Grad  \phi\|_{\vc{L}^2(E)}\right), \quad \forall \Gamma \in \Gamma_{h}\cap \partial E.
$$
\end{lemma}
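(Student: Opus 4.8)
The plan is to obtain this from the continuous trace theorem on a fixed reference simplex together with an affine change of variables, carefully tracking the powers of the element size. First I would fix once and for all a reference simplex $\hat{E}$ with a distinguished face $\hat{\Gamma}$, and for each $E \in E_h$ introduce the affine bijection $F_E(\hat{x}) = B_E\hat{x} + b_E$ mapping $\hat{E}$ onto $E$ and $\hat{\Gamma}$ onto the chosen face $\Gamma \in \Gamma_h \cap \partial E$. Shape regularity of $\{E_h\}_h$ (the constant $\kappa$) yields the familiar bounds $\abs{\det B_E} \simeq h_E^{N}$, $\norm{B_E} \lesssim h_E$ and $\norm{B_E^{-1}} \lesssim h_E^{-1}$, with implied constants depending only on $\kappa$; here $h_E$ denotes the diameter of $E$. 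Setting $\hat{\phi} := \phi\circ F_E \in W^{1,2}(\hat{E})$, the classical trace theorem on the \emph{fixed} domain $\hat{E}$ (see \cite{Agmonn}) provides an absolute constant $\hat{C}$ with $\norm{\hat{\phi}}_{L^2(\hat{\Gamma})} \le \hat{C}\big(\norm{\hat{\phi}}_{L^2(\hat{E})} + \norm{\nabla\hat{\phi}}_{\vc{L}^2(\hat{E})}\big)$. The tetrahedral ($N=3$) and triangular ($N=2$) cases are handled identically.

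Next I would push this inequality back to $E$ through $F_E$. The surface and volume change-of-variables formulas, combined with the bounds on $B_E$, give the norm equivalences $\norm{\phi}_{L^2(\Gamma)}^2 \simeq h_E^{N-1}\norm{\hat{\phi}}_{L^2(\hat{\Gamma})}^2$, $\norm{\phi}_{L^2(E)}^2 \simeq h_E^{N}\norm{\hat{\phi}}_{L^2(\hat{E})}^2$, and $\norm{\Grad\phi}_{\vc{L}^2(E)}^2 \simeq h_E^{N-2}\norm{\nabla\hat{\phi}}_{\vc{L}^2(\hat{E})}^2$; the last relation is the delicate one, since the chain rule contributes the factor $B_E^{-T}$ (of size $h_E^{-1}$) in addition to the volume Jacobian factor $h_E^{N}$. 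Substituting the reference-element trace inequality together with these equivalences yields
\[
	\norm{\phi}_{L^2(\Gamma)}^2 \lesssim h_E^{N-1}\Big(\norm{\hat{\phi}}_{L^2(\hat{E})}^2 + \norm{\nabla\hat{\phi}}_{\vc{L}^2(\hat{E})}^2\Big) \lesssim h_E^{-1}\norm{\phi}_{L^2(E)}^2 + h_E\,\norm{\Grad\phi}_{\vc{L}^2(E)}^2 ,
\]
and taking square roots (using $\sqrt{a+b} \le \sqrt{a} + \sqrt{b}$) gives $\norm{\phi}_{L^2(\Gamma)} \lesssim h_E^{-1/2}\norm{\phi}_{L^2(E)} + h_E^{1/2}\norm{\Grad\phi}_{\vc{L}^2(E)}$, i.e.\ the asserted bound with $h_E$ in the role of $h$ on the element $E$; since $h$ and $h_E$ are comparable (equivalently, reading $h$ in the statement as the local diameter), this is the claim.

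I do not expect a genuine obstacle: the argument is entirely routine scaling. The two points that require attention are purely bookkeeping — getting the exponent $h_E^{N-2}$ right for the gradient term (the most error-prone spot, because two competing factors $h_E^{-1}$ and $h_E^{N}$ meet there), and checking that \emph{every} constant produced — from the trace theorem on $\hat{E}$, from the three norm equivalences, and from $\abs{\det B_E} \simeq h_E^{N}$ and $\norm{B_E^{-1}} \lesssim h_E^{-1}$ — depends only on $\hat{E}$ and on the shape-regularity constant $\kappa$, hence is uniform over $E \in E_h$ and over $h$, as asserted.
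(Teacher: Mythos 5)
Your argument is exactly the one the paper intends: it offers no written proof, merely asserting that the lemma ``follows from scaling arguments and the trace theorem \cite{Agmonn}'', and your reference-element trace inequality plus affine scaling, with the exponents $h_E^{N-1}$, $h_E^{N}$, $h_E^{N-2}$ all tracked correctly, is that standard argument carried out in full. Your closing remark about $h$ versus $h_E$ is also the right caveat (the paper implicitly assumes the two are comparable, as it does elsewhere, e.g.\ in Lemma \ref{lemma:inverse}), so the proposal is correct.
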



We now establish a Sobolev embedding estimate 
for the discrete decompositions \eqref{eq:Vh-decomp} and \eqref{eq:Wh-decomp}. 

\begin{lemma}
 \label{lemma:embedding}
The finite element spaces $\vc{V}_{h}^{0,\perp}(\Om)$ and $\vc{W}_{h}^{0,\perp}(\Om)$ satisfies the
following embedding results independent of $h$:
\begin{enumerate}
\item{}The space $\vc{V}_{h}^{0,\perp}(\Om)\cap \vc{W}_{0}^{\operatorname{div}, 2}(\Omega)$ is embedded in $\vc{L}^{2^*}(\Om)$,
\item{}The space $\vc{W}_{h}^{0,\perp}(\Om)\cap \vc{W}_{0}^{\operatorname{curl}, 2}(\Om)$ is embedded in $\vc{L}^{2^*}(\Om)$,
\end{enumerate}
where $2^* = 6$ if $N=3$, and $2^*$ is any large finite number if $N=2$.
\end{lemma}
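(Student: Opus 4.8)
The plan is to exploit the discrete Hodge decompositions \eqref{eq:Wh-decomp}--\eqref{eq:Vh-decomp} together with the commuting diagram to reduce each of the two embeddings to a continuous Sobolev embedding. Consider first case (1). Take $\vc{v}_h \in \vc{V}_{h}^{0,\perp}(\Om) \cap \Hdiv$ and set $f := \Div \vc{v}_h \in L^2_0(\Om)$. Let $\vc{v} \in \vc{W}_0^{\operatorname{div},2}(\Om) \cap \vc{W}^{1,2}(\Om)$ be the solution of the auxiliary problem $\Div \vc{v} = f$, $\Curl \vc{v} = 0$, $\vc{v}\cdot\nu|_{\pOm} = 0$; concretely one may take $\vc{v} = \Grad \Delta^{-1}_N f$ (Neumann Laplacian), which satisfies the elliptic estimate $\norm{\vc{v}}_{\vc{W}^{1,2}(\Om)} \leq C\norm{f}_{L^2(\Om)}$ on the convex domain $\Om$, and hence by the continuous Sobolev embedding $\norm{\vc{v}}_{\vc{L}^{2^*}(\Om)} \leq C\norm{f}_{L^2(\Om)}$. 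By the commuting property of the diagram, $\Div \Pi_h^V \vc{v} = \Pi_h^Q \Div \vc{v} = \Pi_h^Q f = f = \Div \vc{v}_h$ (the last equality since $f$ is piecewise constant). Therefore $\vc{v}_h - \Pi_h^V \vc{v}$ is discretely divergence-free in $\vc{V}_h$; but $\vc{v}_h \in \vc{V}_h^{0,\perp}$ is orthogonal to the discretely divergence-free subspace, so testing against $\vc{v}_h - \Pi_h^V\vc{v}$ gives $\norm{\vc{v}_h}_{\vc{L}^2(\Om)}^2 = \langle \vc{v}_h, \Pi_h^V \vc{v}\rangle$.

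With this orthogonality in hand, I would write
\begin{equation*}
\norm{\vc{v}_h}_{\vc{L}^{2^*}(\Om)} \leq \norm{\vc{v}_h - \Pi_h^V \vc{v}}_{\vc{L}^{2^*}(\Om)} + \norm{\Pi_h^V \vc{v}}_{\vc{L}^{2^*}(\Om)},
\end{equation*}
and estimate the two pieces separately. For the interpolation error $\vc{v}_h - \Pi_h^V\vc{v}$, which is discretely divergence-free, I would use the inverse inequality (Lemma \ref{lemma:inverse}) elementwise to pass from $\vc{L}^{2^*}$ down to $\vc{L}^2$ at the cost of $h^{N/2^* - N/2}$, combined with the Poincaré inequality \eqref{eq:Poincare1} applied on the subspace $\vc{V}_h^{0,\perp}$ — since $\vc{v}_h - \Pi_h^V\vc{v}$ is discretely divergence-free its $\vc{V}_h^{0,\perp}$-component vanishes, so in fact $\norm{\vc{v}_h - \Pi_h^V \vc{v}}_{\vc{L}^2}$ must be controlled more carefully: write it as $\vc{v}_h - \Pi_h^V\vc{v}$ and bound $\norm{\vc{v}_h}_{\vc{L}^2} \leq C\norm{\Div\vc{v}_h}_{L^2} = C\norm{f}_{L^2}$ by \eqref{eq:Poincare1}, while $\norm{\vc{v} - \Pi_h^V\vc{v}}_{\vc{L}^2} \leq Ch\norm{\Grad\vc{v}}_{\vc{L}^2} \leq Ch\norm{f}_{L^2}$ by Lemma \ref{lemma:interpolation}, then absorb the negative power of $h$ from the inverse estimate; one must check the powers of $h$ balance, which they do precisely because $2^* = 6 = \frac{2N}{N-2}$ in 3D is the Sobolev exponent (and in 2D any finite $2^*$ works since the relevant negative $h$-power is harmless after also invoking $\norm{\vc{v} - \Pi_h^V\vc{v}}_{\vc{L}^{2^*}} \leq Ch^{?}\norm{\Grad\vc{v}}$). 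For the term $\norm{\Pi_h^V \vc{v}}_{\vc{L}^{2^*}(\Om)}$ I would use $\norm{\Pi_h^V\vc{v}}_{\vc{L}^{2^*}} \leq \norm{\Pi_h^V\vc{v} - \vc{v}}_{\vc{L}^{2^*}} + \norm{\vc{v}}_{\vc{L}^{2^*}}$, bounding the first summand as above and the second by the continuous embedding and elliptic regularity, yielding $\norm{\vc{v}_h}_{\vc{L}^{2^*}} \leq C\norm{f}_{L^2} = C\norm{\Div \vc{v}_h}_{L^2} \leq C\norm{\vc{v}_h}_{\mcw}$.

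Case (2) is entirely analogous: for $\vc{w}_h \in \vc{W}_h^{0,\perp} \cap \Hcurl$, set $\vc{g} := \Curl \vc{w}_h$, solve an auxiliary curl problem to get $\vc{w}$ with $\Curl \vc{w} = \vc{g}$ and the regularity estimate $\norm{\vc{w}}_{\vc{W}^{1,2}} \leq C\norm{\vc{g}}_{\vc{L}^2}$ (here using the equivalence of $\norm{\cdot}_{\mcw}$ with the $\vc{W}^{1,2}$-norm recorded in Section \ref{sec:prelim}, or the analogous fact on the space with $\vc{w}\times\nu = 0$), use the commuting diagram to conclude $\Curl \Pi_h^W \vc{w} = \Pi_h^V \Curl \vc{w} = \Curl\vc{w}_h$ so that $\vc{w}_h - \Pi_h^W \vc{w}$ is discretely curl-free and orthogonal to $\vc{w}_h$, then repeat the inverse-inequality-plus-interpolation-plus-Poincaré \eqref{eq:Poincare2} argument, using the $\vc{W}$-part of Lemma \ref{lemma:interpolation}. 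The main obstacle, and the place requiring genuine care rather than routine bookkeeping, is the bookkeeping of the $h$-powers in the inverse estimate step: one needs the scaling to be neutral or favorable, which forces the use of the critical Sobolev exponent $2^* = 2N/(N-2)$ in dimension $N = 3$ (and explains why in $N = 2$, where no such critical exponent exists, one only gets $\vc{L}^{2^*}$ for arbitrarily large but finite $2^*$ with an $h$-independent constant that degenerates as $2^* \to \infty$). A secondary point to verify is that the auxiliary continuous problems are well-posed with the stated elliptic regularity on the convex polygonal domain $\Om$ — this is where convexity of $\Om$ enters — so that the constants in the final bound are genuinely independent of $h$.
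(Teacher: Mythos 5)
Your overall strategy for case (1) --- construct the curl--free potential $\vc{v}=\Grad \Delta^{-1}[\Div \vc{v}_h]$, interpolate it, use the commuting diagram to get $\Div \Pi_h^V\vc{v}=\Div \vc{v}_h$, and combine the discrete Hodge decomposition with the Poincar\'e inequality \eqref{eq:Poincare1} --- is exactly the paper's, and the same is true of your case (2). Where the two arguments diverge is the passage from $L^2$ to $L^{2^*}$: the paper identifies $\vc{v}_h$ with the $\vc{V}_h^{0,\perp}$--component of $\Pi_h^V\vc{v}=\Aoph{\Div\vc{v}_h}$ and bounds $\norm{\vc{v}_h}_{\vc{L}^{2^*}(\Om)}\le \norm{\Aoph{\Div\vc{v}_h}}_{\vc{L}^{2^*}(\Om)}\le C\norm{\Aop{\Div\vc{v}_h}}_{\vc{L}^{2^*}(\Om)}\le C\norm{\Div\vc{v}_h}_{L^2(\Om)}$ with no inverse estimate, whereas you go through a triangle inequality plus Lemma \ref{lemma:inverse}.

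That route has a genuine gap. The inverse inequality costs $h^{N/2^*-N/2}=h^{-1}$ when $N=3$, $2^*=6$, so you need $\norm{\vc{v}_h-\Pi_h^V\vc{v}}_{\vc{L}^2(\Om)}\le Ch\norm{\Div\vc{v}_h}_{L^2(\Om)}$. The bounds you actually write down, $\norm{\vc{v}_h}_{\vc{L}^2(\Om)}\le C\norm{\Div\vc{v}_h}_{L^2(\Om)}$ and $\norm{\vc{v}-\Pi_h^V\vc{v}}_{\vc{L}^2(\Om)}\le Ch\norm{\Div\vc{v}_h}_{L^2(\Om)}$, only yield $\norm{\vc{v}_h-\Pi_h^V\vc{v}}_{\vc{L}^2(\Om)}=O(1)\,\norm{\Div\vc{v}_h}_{L^2(\Om)}$, and $h^{-1}\cdot O(1)$ does not balance; the criticality of $2^*=2N/(N-2)$ is not what rescues the estimate. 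The missing ingredient is that $\Pi_h^V\vc{v}-\vc{v}_h$ is precisely the $\Curl\vc{W}_h$--component of $\Pi_h^V\vc{v}$ (by the identification you already made), and that $\vc{v}=\Grad\Delta^{-1}[\Div\vc{v}_h]$ is $L^2$--orthogonal to $\Curl\vc{W}_h$ (integrate by parts, using $\Div\Curl=0$ and $\Curl\vc{\xi}_h\cdot\nu=0$ on $\pOm$). Writing $\Pi_h^V\vc{v}-\vc{v}_h=\Curl\vc{\xi}_h$ then gives
\begin{equation*}
\norm{\Curl\vc{\xi}_h}_{\vc{L}^2(\Om)}^2=\int_\Om\left(\Pi_h^V\vc{v}-\vc{v}\right)\Curl\vc{\xi}_h\,dx
\le \norm{\Pi_h^V\vc{v}-\vc{v}}_{\vc{L}^2(\Om)}\norm{\Curl\vc{\xi}_h}_{\vc{L}^2(\Om)}\le Ch\norm{\Div\vc{v}_h}_{L^2(\Om)}\norm{\Curl\vc{\xi}_h}_{\vc{L}^2(\Om)},
\end{equation*}
which is the $O(h)$ bound you need; with it your argument closes (the analogous repair is needed in case (2)), without it the first term of your triangle inequality is unbounded as $h\to 0$. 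A secondary point: your bound on $\norm{\Pi_h^V\vc{v}-\vc{v}}_{\vc{L}^{2^*}(\Om)}$ via Lemma \ref{lemma:interpolation} would require $\Grad\vc{v}\in\vc{L}^{2^*}(\Om)$, which you do not have; what is actually needed (and what the paper uses implicitly) is the stability $\norm{\Pi_h^V\vc{v}}_{\vc{L}^{2^*}(\Om)}\le C\norm{\vc{v}}_{\vc{W}^{1,2}(\Om)}$.
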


\begin{proof}
We first prove (1).
By virtue of the decomposition \eqref{eq:Vh-decomp} we can for any 
$\vc{v}_{h} \inb \vc{V}_{h}^{0,\perp}(\Om)\cap \Hdiv $ find
functions $\vc{\zeta}_{h} \in \vc{W}_{h}(\Omega)$ and $\vc{z}_{h} \in \vc{V}_{h}^{0,\perp}(\Omega)$
such that
$$
	\Aoph{\Div \vc{v}_{h}} = \Curl \vc{\xi}_{h} + \vc{z}_{h}.
$$
Using the commutative diagram and the definition of $\Aoph{\cdot}$ we easily verify that
$$
\Div \Aoph{\Div \vc{v}_{h}} = \Div \vc{v}_{h}.
$$
Hence, since $(\vc{z}_{h} - \vc{v}_{h}) \in \vc{V}_{h}^{0,\perp}(\Omega)$ we can use the discrete Poincar\'e inequality \eqref{eq:Poincare1}
to conclude that
\begin{equation*}
\|\vc{v}_{h} - \vc{z}_{h}\|_{\vc{L}^2(\Omega)} \leq C\|\Div (\vc{v}_{h} - \vc{z}_{h})\|_{L^2(\Omega)} = 0.
\end{equation*}
Thus, $\vc{z}_{h} = \vc{v}_{h}$ a.e in $\Omega$ and we easily calculate
\begin{equation*}
\begin{split}
\|\vc{v}_{h}\|_{\vc{L}^{2^*}(\Omega)} &= \|\vc{z}_{h}\|_{\vc{L}^{2^*}(\Omega)} 
\leq \|\Aoph{\Div \vc{v}_{h}}\|_{\vc{L}^{2^*}(\Om)} \\
&\leq C_{1}\|\Aop{\Div \vc{v}_{h}}\|_{\vc{L}^{2^*}(\Om)} \leq C_{2}\|\Div \vc{v}_{h}\|_{L^2(\Om)},
\end{split}
\end{equation*}
where the last inequality is the standard Sobolev embedding $\vc{W}^{1,2}(\Om) \subset \vc{L}^{2^*}(\Om)$.

In two spatial dimensions, (2) follows directly from the standard Sobolev embedding $W^{1,2}(\Om) \subset L^{2^*}(\Om)$.
To prove (2) in three spatial dimensions, fix any $\vc{w}_{h} \inb \vc{W}_{h}^{0,\perp}(\Om)\cap \Hcurl$ and 
let $\vc{\eta} \in \Hcurl\cap \vc{W}^{\text{div}, 2}(\Om) \subset \vc{W}^{1,2}(\Om)$ solve (cf. \cite{Girault:1986fu})
\begin{equation*}
\begin{split}
\Curl \vc{\eta} &= \Curl \vc{w}_{h}, \textrm{ in }\Omega, \\
\Div \vc{\eta} &= 0, \textrm{ in } \Omega, \\
\vc{\eta} \times \nu &= 0, \textrm{ on }\partial \Omega.
\end{split}
\end{equation*}
Using the decomposition \eqref{eq:Wh-decomp} of the space $\vc{W}_{h}(\Om)$, we can find functions
$s_{h} \in S_{h}(\Om)$ and $\vc{\zeta}_{h} \in \vc{W}_{h}^{0,\perp}(\Om)$ such that
$$
	\Pi_{h}^W\vc{\eta} = \Grad s_{h} + \vc{\zeta}_{h}.
$$
Hence, from the commuting diagram property, we deduce
$$
	\Curl \vc{\zeta}_{h}= \Curl \Pi_{h}^W \vc{\eta} = \Pi_{h}^V \Curl \vc{\eta} = \Pi_{h}^V \Curl \vc{w}_{h} = \Curl \vc{w}_{h}.
$$
Thus, since $(\vc{w}_{h} - \vc{\zeta}_{h}) \in \vc{W}_{h}^{0,\perp}(\Om)$ we can use the Poincar\'e inequality \eqref{eq:Poincare2}
to conclude that 
$$
	\|\vc{w}_{h} - \vc{\zeta}_{h}\|_{\vc{L}^2(\Om)} \leq C\|\Curl (\vc{w}_{h} - \vc{\zeta}_{h})\|_{\vc{L}^2(\Om)}= 0,
$$
and hence that $\vc{w}_{h} = \vc{\zeta}_{h}$. Moreover, we easily calculate
$$
\|\vc{w}_{h}\|_{\vc{L}^{2^*}(\Om)} = \|\vc{\zeta}_{h}\|_{\vc{L}^{2^*}(\Om)} \leq \|\Pi_{h}^W \vc{\eta}\|_{\vc{L}^{2^*}(\Om)} 
\leq C_{1}\|\vc{\eta}\|_{\vc{L}^{2^*}(\Om)} \leq C_{2}\|\Grad \vc{\eta}\|_{\vc{L}^{2}(\Om)},
$$
where the last inequality is the standard Sobolev embedding $\vc{W}^{1,2}(\Om) \subset \vc{L}^2(\Om)$.
This concludes the proof.
\end{proof}

We end this section by recalling a compactness result from \cite[Theorem A.1]{Karlsen1}.

\begin{lemma}\label{lemma:spacetranslation}
Let $\{\vc{v}_{h}\}_{h>0}$ be a sequence of functions in $\vc{V}^{0,\perp}_{h}$ with
$\Div \vc{v}_h \inb L^2(\Om)$.
For any $\xi \in \mathbb{R}^N$,
\begin{equation*}
\|\vc{v}_{h}(x) - \vc{v}_{h}(x- \xi) \|_{\vc{L}^2(\Omega)} \leq C(|\xi|^{\frac{4-N}{2}} + |\xi|^2)^\frac{1}{2}\|\Div \vc{v}_{h}\|_{L^2(\Omega)},
\end{equation*}
where the constant $C>0$ is independent of both $h$ and $\xi$.
\end{lemma}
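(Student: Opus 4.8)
The plan is to reduce the estimate for the discrete field $\vc{v}_h$ to a space-translation estimate for an honest $\vc{W}^{1,2}$ function, at the cost of an interpolation error that is controlled by $h$ and $|\xi|$. Writing $\sigma := \Div \vc{v}_h \in L^2(\Om)$, I first introduce the continuous lifting $\vc{v} := \gradlaplaceinv{\sigma} = \Grad \Delta^{-1}[\sigma]$, which satisfies $\Div \vc{v} = \sigma$ and $\norm{\vc{v}}_{\vc{W}^{1,2}(\Om)} \leq C\norm{\sigma}_{L^2(\Om)}$ by elliptic regularity (convexity of $\Om$ is used here). As in the proof of Lemma \ref{lemma:embedding}, the commuting-diagram property gives $\Div \Pi_h^V \vc{v} = \Pi_h^Q \Div \vc{v} = \Pi_h^Q \sigma = \Div \vc{v}_h$, so $\vc{v}_h - \Pi_h^V\vc{v} \in \vc{V}_h^{0,\perp}\cap\{\text{div} = 0\}$, which is the zero space; hence $\vc{v}_h = \Pi_h^V \vc{v}$ exactly. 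Thus it suffices to bound translations of $\Pi_h^V\vc{v}$.

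Next I split
\begin{equation*}
\vc{v}_h(x) - \vc{v}_h(x-\xi) = \bigl(\Pi_h^V\vc{v} - \vc{v}\bigr)(x) - \bigl(\Pi_h^V\vc{v} - \vc{v}\bigr)(x-\xi) + \vc{v}(x) - \vc{v}(x-\xi).
\end{equation*}
The last difference is handled by the classical estimate $\norm{\vc{v}(\cdot) - \vc{v}(\cdot-\xi)}_{\vc{L}^2(\Om)} \leq |\xi|\,\norm{\Grad\vc{v}}_{\vc{L}^2(\R^N)} \leq C|\xi|\,\norm{\sigma}_{L^2(\Om)}$ (after extending $\vc{v}$ to $\R^N$ with comparable $\vc{W}^{1,2}$ norm). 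The first two terms are each bounded in $\vc{L}^2(\Om)$ by $\norm{\Pi_h^V\vc{v} - \vc{v}}_{\vc{L}^2(\Om)} \leq Ch\norm{\Grad\vc{v}}_{\vc{L}^2(\Om)} \leq Ch\norm{\sigma}_{L^2(\Om)}$ via Lemma \ref{lemma:interpolation}. Combining, $\norm{\vc{v}_h(\cdot) - \vc{v}_h(\cdot-\xi)}_{\vc{L}^2(\Om)} \leq C(h + |\xi|)\norm{\sigma}_{L^2(\Om)}$, which is \emph{not} yet the claimed bound — it has the wrong dependence when $h \gg |\xi|$, and it carries an extra $h$.

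To remove the $h$ and get the stated $(|\xi|^{(4-N)/2} + |\xi|^2)^{1/2}$ factor, the interpolation-error term must be estimated more carefully using its piecewise structure: on each element $\Pi_h^V\vc{v} - \vc{v}$ is a genuine error with $\vc{L}^2(E)$-norm $\leq Ch_E\norm{\Grad\vc{v}}_{\vc{L}^2(E)}$, and the translation difference of a piecewise-smooth function over a region of width $|\xi|$ picks up only the $O(|\xi|)$-neighbourhood of the skeleton $\Gamma_h$. One estimates the contribution near faces using Lemma \ref{lemma:edgebounds} and the scaling $|\{x : \dist(x,\Gamma_h) < |\xi|\}| \lesssim |\xi| h^{-1}|\Om|$, together with an inverse estimate (Lemma \ref{lemma:inverse}) on $\vc{v}_h$ itself, to trade powers of $h$ for powers of $|\xi|$; the exponent $\tfrac{4-N}{2}$ is exactly what comes out of balancing the measure of the neighbourhood against the $L^2\!\to\!L^\infty$ inverse inequality in dimension $N$. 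Since this is precisely the content of \cite[Theorem~A.1]{Karlsen1}, I would carry out this skeleton-neighbourhood bookkeeping in the case $h \lesssim |\xi|$ and use the trivial bound $\norm{\vc{v}_h(\cdot)-\vc{v}_h(\cdot-\xi)}_{\vc{L}^2} \leq 2\norm{\vc{v}_h}_{\vc{L}^2} \leq C\norm{\sigma}_{L^2}$ (from Poincaré \eqref{eq:Poincare1}) absorbed into $|\xi|^{(4-N)/2}$ when $|\xi|$ is bounded, to cover the regime $h \gtrsim |\xi|$. The main obstacle is this last, technical, regime-splitting argument: controlling translations of a merely $\vc{V}_h$-conforming (hence across-face-discontinuous in its tangential parts) field requires the careful face-neighbourhood/inverse-estimate trade-off, and getting the sharp exponent $\tfrac{4-N}{2}$ rather than a crude power of $h/|\xi|$ is the delicate point.
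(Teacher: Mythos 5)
The paper does not actually prove this lemma; it imports it verbatim from \cite[Theorem A.1]{Karlsen1}, so the only fair comparison is whether your proposal would stand on its own. It does not, for two reasons. First, a reparable error: from $\Div \Pi_h^V\vc{v} = \Pi_h^Q\Div\vc{v} = \Div\vc{v}_h$ you may only conclude that $\vc{v}_h - \Pi_h^V\vc{v}$ is a divergence-free element of $\vc{V}_h$, i.e.\ an element of $\Curl\vc{W}_h$ --- \emph{not} that it vanishes, because the canonical interpolant $\Pi_h^V\vc{v}$ of a gradient field is in general not $L^2$-orthogonal to $\Curl\vc{W}_h$ and hence does not lie in $\vc{V}_h^{0,\perp}$. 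This is precisely why the proof of Lemma \ref{lemma:embedding} writes $\Aoph{\Div\vc{v}_h} = \Curl\vc{\xi}_h + \vc{z}_h$ and only identifies the component $\vc{z}_h$ with $\vc{v}_h$. The defect is harmless for your purposes: testing $\Curl\vc{\xi}_h = \Pi_h^V\vc{v} - \vc{v}_h$ against itself and using that both $\vc{v}$ (a gradient) and $\vc{v}_h$ (in $\vc{V}_h^{0,\perp}$) are orthogonal to $\Curl\vc{W}_h$ gives $\norm{\Curl\vc{\xi}_h}_{\vc{L}^2(\Om)} \le \norm{\Pi_h^V\vc{v}-\vc{v}}_{\vc{L}^2(\Om)} \le Ch\norm{\Div\vc{v}_h}_{L^2(\Om)}$, which is of the same order as the interpolation error you already carry. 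You should state this correction rather than claim exact equality.

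The second issue is a genuine gap. Your argument, once repaired, yields $C(h+\abs{\xi})\norm{\Div\vc{v}_h}_{L^2(\Om)}$, and since $\abs{\xi} \le (\abs{\xi}^{(4-N)/2}+\abs{\xi}^2)^{1/2}$ this settles the regime $h \lesssim \abs{\xi}$ completely. But in the complementary regime $h \gg \abs{\xi}$ you prove nothing: the one concrete device you offer there --- the trivial bound $2\norm{\vc{v}_h}_{\vc{L}^2(\Om)} \le C\norm{\Div\vc{v}_h}_{L^2(\Om)}$ ``absorbed into $\abs{\xi}^{(4-N)/2}$'' --- cannot work, because $\abs{\xi}^{(4-N)/2}\to 0$ as $\abs{\xi}\to 0$ while the trivial bound stays $O(1)$; a constant is never absorbed by a vanishing modulus, and producing a modulus of continuity that is uniform in $h$ even when $\abs{\xi}\ll h$ is exactly the assertion of the lemma. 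The skeleton-neighbourhood/inverse-estimate balancing that you describe qualitatively (translation within an element controlled by $\abs{\xi}\norm{\Grad\vc{v}_h}$, jumps across faces controlled on a set of measure $O(\abs{\xi}h^{-1})$, traded against $L^2\to L^{2^*}$ or $L^2\to L^\infty$ inverse estimates) is the entire substance of \cite[Theorem A.1]{Karlsen1} and is where the exponent $\tfrac{4-N}{2}$ is actually produced; deferring it as ``technical bookkeeping'' leaves the lemma unproved in the only regime where it is not elementary.
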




\section{Numerical method and main result}\label{sec:numerical-method}
In this section we define the numerical method for the Stokes approximation equations
 and the state the main convergence theorem. The proof 
of the main theorem is deferred to subsequent sections.


Given a time step $\Dt>0$, we discretize the time interval $[0,T]$ in 
terms of the points $t^m=m\Dt$, $m=0,\dots,M$, where we assume that $M\Dt=T$.  
Regarding the spatial discretization, we let $\{E_{h}\}_{h}$ be a shape regular 
family of tetrahedral meshes of $\Omega$,  where $h$ is the maximal diameter.  
It will be a standing assumption that $h$ and $\Delta t$ are 
related such that $\Delta t = c h$, for some constant $c$. 
Furthermore, for each $h$, let $\Gamma_{h}$ denote the set of faces in $E_{h}$. 

For each fixed $h>0$, we let $\vc{W}_h(\Om)$ and $\vc{V}_h(\Om)$
denote the N\'ed\'elec  spaces of the first order and kind
on $E_h$ (cf. Section 2.3) and $Q_h(\Om)$ 
the space of piecewise constants on $E_h$. To incorporate boundary conditions, we let the degrees 
of freedom of $\vc{W}_h(\Om)$ and $\vc{V}_h(\Om)$ located at
the boundary $\pOm$ vanish.

Before defining our numerical scheme, we shall need to introduce some 
additional notation related to the discontinuous Galerkin scheme. 
Concerning the boundary $\partial E$ of an element $E$, we write $f_{+}$ 
for the trace of the function $f$ achieved from within the element $E$ 
and $f_{-}$ for the trace of $f$ achieved from outside $E$. 
Concerning an edge $\Gamma$ that
is shared between two elements $E_{-}$ and $E_{+}$, we will write $f_{+}$ for 
the trace of $f$ achieved from within $E_{+}$ and $f_{-}$ for the trace
of $f$ achieved from within $E_{-}$. Here $E_{-}$ and $E_{+}$ are 
defined such that $\nu$ points from $E_{-}$ to $E_{+}$, where $\nu$ is 
fixed (throughout) as one of the two possible 
normal components on each edge $\Gamma$ throughout the discretization.
We also write $\jump{f}_{\Gamma}= f_{+} - f_{-}$ for the jump of $f$ 
across the edge $\Gamma$, while forward time-differencing 
of $f$ is denoted by $\jump{f^m} = f^{m+1} - f^m$.
Discrete implicit time discretization of a function $f$ is
denoted by the operator $\Pth{f^m} = \frac{1}{\Delta t}\jump{f^{m-1}}$.

Let us now define our numerical scheme. 

\begin{definition}[Numerical scheme]\label{def:num-scheme}
Let $\Set{\vrho^0_h(x)}_{h>0}$ be a sequence (of piecewise constant 
functions) in $Q_{h}(\Omega)$ that satisfies $\vrho_h^0>0$ for each fixed $h>0$ 
and $\varrho^0_h\to \vrho^0$ a.e.~in $\Om$ and in $L^1(\Om)$ 
as $h\to 0$. Let the sequence $\{\vu_h^0\}_{h>0}$ be
such that for each fixed $h>0$,  $\vu_h^0 \in \vc{V}_h(\Om)$ and satisfies
\begin{equation}\label{eq:l2proj}
\int_\Om \vu_h^0 \vc{v}_h~dx = \int_\Om \vu^0 \vc{v}_h~dx, \quad \forall \vc{v}_h \in \vc{V}_h(\Om). 	
\end{equation}

Now, determine functions 
$$
(\varrho^m_{h},\vc{w}^m_{h},\vc{u}^m_{h}) \in Q_{h}(\Omega)\times\vc{W}_{h}(\Omega)
\times \vc{V}_{h}(\Omega), \quad m=1,\dots,M,
$$
such that for all $\phi_{h} \in Q_{h}(\Omega)$,
\begin{equation}\label{FEM:contequation}
	\begin{split}
		&\int_\Omega \Pth{\varrho^m_h} \phi_{h}\ dx
		=\Delta t\sum_{\Gamma \in \Gamma^I_h}\int_\Gamma \left(\varrho^m_{-}(\vc{u}^{m}_h \cdot \nu)^+
		+\varrho^m_+(\vc{u}^{m}_h \cdot \nu)^-\right)\jump{\phi_{h}}_\Gamma\ dS(x) ,
	\end{split}
\end{equation}
and for all $(\vc{\eta}_{h},\vc{v}_{h}) \in \vc{W}_{h}(\Omega)\times \vc{V}_{h}(\Omega)$,
\begin{equation}\label{FEM:momentumeq}
	\begin{split}
		&\int_{\Omega}\Pth{\vc{u}_{h}^m}\vc{v}_{h} + \mu\Curl \vc{w}^m_{h}\vc{v}_{h} + \left[(\mu + \lambda)\Div \vc{u}^m_{h}
		-p(\varrho^m_{h})\right]\Div \vc{v}_{h}\ dx
		= 0, \\
		&\int_{\Omega}\vc{w}^m_{h}\vc{\eta}_{h} -  \vc{u}^m_{h}\Curl \vc{\eta}_{h} \ dx =0,
	\end{split}
\end{equation}
for $m=1,\dots,M$. 

In \eqref{FEM:contequation}, $(\vc{u}_{h} \cdot \nu)^+=\max\{\vc{u}_{h} \cdot \nu, 0\}$ 
and $(\vc{u}_{h} \cdot \nu)^+ = \min\{\vc{u}_{h} \cdot \nu, 0\}$, so that 
$\vc{u}_{h} \cdot \nu=(\vc{u}_{h} \cdot \nu)^++(\vc{u}_{h} \cdot \nu)^-$, i.e., in the 
evaluation of $\varrho(\vc{u} \cdot \nu)$ at the edge $\Gamma$ the 
trace of $\varrho$ is taken in the upwind direction. 
\end{definition}

\begin{remark}\label{rem:E-VS-Gamma}
Recall that $\vrho_\pm$ and $(\vc{u}_h \cdot \nu)^\pm$ related to 
a face $\Gamma$ has a different meaning than  $\vrho_\pm$ and $(\vc{u}_h \cdot \nu)^\pm$ 
related to the boundary of an element $\partial E$.
By direct calculation, one can verify the identity
\begin{align*}
	&\Delta t\sum_{E \in E_{h}}\int_{\binner}\left(\vrho^m_{+}(\vc{u}_{h}^{m} \cdot \nu)^+
	+\vrho^m_{-}(\vc{u}_{h}^{m} \cdot \nu)^-\right)\phi_{h} \ dS(x) \\
	&\qquad 
	= -\Delta t\sum_{\Gamma \in \Gamma^I_{h}}\int_{\Gamma}\left(\vrho^m_{-}(\vc{u}_{h}^{m} \cdot \nu)^+
	+\vrho^m_{+}(\vc{u}_{h}^{m} \cdot \nu)^-\right)[\phi_{h}]_\Gamma \ dS(x).
\end{align*}
Using this identity, we can state \eqref{FEM:contequation} on the following form:
\begin{equation}\label{FEM:contequation-newform}
	\begin{split}
		&\int_\Omega \vrho^m_h \phi_{h}\ dx
		+\Delta t\sum_{E \in E_{h}}\int_{\binner}\left(\vrho^m_{+}(\vc{u}_{h}^{m} \cdot \nu)^+
		+\vrho^m_{-}(\vc{u}_{h}^{m} \cdot \nu)^-\right)\phi_{h} \ dS(x)
		\\ & \qquad = \int_\Omega \vrho^{m-1}_h\phi_{h}\ dx.
	\end{split}
\end{equation}
The form \eqref{FEM:contequation-newform} will be used frequently  in the subsequent analysis.

\end{remark}

For each fixed $h>0$, the numerical solution 
$\Set{(\varrho^m_{h},\vc{w}^m_{h},\vc{u}^m_{h})}_{m=0}^M$
is extended to the whole of $(0,T)\times \Omega$ by setting 
\begin{equation}\label{eq:num-scheme-II}
	(\varrho_{h},\vc{w}_{h},\vc{u}_{h})(t)=(\varrho^m_{h},\vc{w}^m_{h},\vc{u}^m_{h}), 
	\qquad t\in (t_{m-1},t_m), \quad m=1,\dots,M.
\end{equation}
In addition, we set $\varrho_{h}(0)= \varrho^0_{h}$ and $\vu_h(0)= \vu_h^0$. 

The continuity scheme \eqref{FEM:contequation} clearly preserves the total
mass. The following lemma  from \cite[Lemma 4.1]{Karlsen1} states
that the density is strictly positive whenever the initial density is strictly positive.
\begin{lemma} \label{lemma:vrho-props}
Fix any $m=1,\dots,M$ and suppose $\varrho^{m-1}_{h} 
\in Q_{h}(\Om)$, $\vc{u}^m_{h} \in \vc{V}_{h}(\Om)$ 
are given bounded functions. Then the solution $\varrho^{m}_{h} \in Q_{h}(\Om)$ of 
the discontinuous Galerkin scheme \eqref{FEM:contequation} satisfies
$$
\min_{x \in \Omega}\varrho_{h}^m \geq \min_{x \in \Omega}\varrho_{h}^{m-1}
\left(\frac{1}{1 + \Delta t \|\Div \vc{u}^m_{h}\|_{L^\infty(\Omega)}}\right).
$$
Consequently, if $\varrho^{m-1}_{h}(\cdot)>0$, then $\varrho^{m}_{h}(\cdot)>0$.
\end{lemma}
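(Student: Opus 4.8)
The plan is to work element by element on the piecewise-constant density. Fix $m$ and write $\varrho^{m-1}_h = \sum_{E} \varrho^{m-1}_E \chi_E$ and $\varrho^{m}_h = \sum_{E} \varrho^{m}_E \chi_E$. The natural test function is $\phi_h = \chi_{E_0}$ for a fixed element $E_0$; inserting this into the reformulated scheme \eqref{FEM:contequation-newform} collapses the sum over $E$ to the single term $E = E_0$ and gives an algebraic relation linking $\varrho^{m}_{E_0}$ to $\varrho^{m-1}_{E_0}$ and to the traces $\varrho^{m}_{\pm}$ on $\partial E_0 \setminus \partial\Omega$. First I would expand the boundary integral: on each interior face $\Gamma \subset \partial E_0$, the ``$+$'' trace is $\varrho^{m}_{E_0}$ (coming from inside $E_0$) and the ``$-$'' trace is $\varrho^{m}_{E'}$ where $E'$ is the neighbour across $\Gamma$. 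Because $\vc{u}^m_h \in \vc{V}_h$ is divergence-conforming with vanishing normal trace on $\partial\Omega$, one has $\int_{\partial E_0} \vc{u}^m_h \cdot \nu \, dS = \int_{E_0} \Div \vc{u}^m_h \, dx$, which lets me rewrite $|E_0|\,\varrho^m_{E_0}$ plus the $(\vc{u}^m_h\cdot\nu)^+$-part of the flux as $|E_0|\,\varrho^m_{E_0}\big(1 + \Delta t\, \langle \Div \vc{u}^m_h\rangle_{E_0}^{+}\big)$ up to the remaining outflow contributions; the cleaner route is simply to isolate $\varrho^m_{E_0}$ times a positive coefficient on the left.

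The key algebraic step: rearranging, one obtains
\[
  \varrho^m_{E_0}\Big( |E_0| + \Delta t \sum_{\Gamma \subset \partial E_0 \setminus \partial\Omega} \int_\Gamma (\vc{u}^m_h\cdot\nu)^+ \, dS \Big)
  = |E_0|\,\varrho^{m-1}_{E_0} - \Delta t \sum_{\Gamma} \varrho^m_{E'(\Gamma)} \int_\Gamma (\vc{u}^m_h\cdot\nu)^- \, dS .
\]
Here every term on the right is nonnegative when all $\varrho^m_{E'} \ge 0$, since $(\vc{u}^m_h\cdot\nu)^- \le 0$; and the coefficient multiplying $\varrho^m_{E_0}$ on the left is strictly positive. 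This already shows that the scheme preserves nonnegativity (and positivity) of the density provided a solution exists. To get the quantitative lower bound, I would choose $E_0$ to be the element where $\varrho^m_h$ attains its minimum, call it $\varrho^m_{\min}$. On the right, bound $\varrho^m_{E'(\Gamma)} \ge \varrho^m_{\min}$ and use $\int_\Gamma(\vc{u}^m_h\cdot\nu)^+ \,dS \le \|\Div \vc{u}^m_h\|_{L^\infty}$-type control; more precisely, combine the $(\cdot)^+$ and $(\cdot)^-$ face integrals via $\sum_\Gamma \int_\Gamma \vc{u}^m_h\cdot\nu = \int_{E_0}\Div\vc{u}^m_h$ so that the two flux sums partially cancel, leaving
\[
  \varrho^m_{\min}\Big(|E_0| + \Delta t \!\!\sum_{\Gamma\subset\partial E_0}\!\! \int_\Gamma (\vc{u}^m_h\cdot\nu)^+ dS\Big)
  \ \ge\ |E_0|\,\varrho^{m-1}_{\min}
  \ -\ \Delta t\,\varrho^m_{\min}\!\!\sum_{\Gamma}\!\!\int_\Gamma(\vc{u}^m_h\cdot\nu)^- dS,
\]
which upon moving the last term left and dividing gives $\varrho^m_{\min}\,|E_0|\big(1 + \Delta t\,\|\Div\vc{u}^m_h\|_{L^\infty(\Omega)}\big) \ge |E_0|\,\varrho^{m-1}_{\min}$, hence the claimed estimate $\varrho^m_{\min} \ge \varrho^{m-1}_{\min}/(1 + \Delta t\|\Div\vc{u}^m_h\|_{L^\infty(\Omega)})$. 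The final sentence of the lemma then follows by iterating: if $\varrho^{m-1}_h > 0$ everywhere then $\varrho^m_{\min} > 0$.

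I should also address existence of the discrete solution $\varrho^m_h$, since the bound is vacuous otherwise: for fixed $\vc{u}^m_h$ the scheme \eqref{FEM:contequation} is a square linear system for the coefficients $(\varrho^m_E)_E$, and the matrix is (by the computation above, applied without taking the minimum) an M-matrix — strictly diagonally dominant with positive diagonal and nonpositive off-diagonal entries — hence invertible, with nonnegative inverse, which simultaneously delivers existence and the nonnegativity/positivity propagation. The main obstacle, such as it is, is purely bookkeeping: correctly tracking which trace (inner vs.\ outer, $+$ vs.\ $-$) appears on each face of $E_0$ and exploiting the identity in Remark \ref{rem:E-VS-Gamma} together with $\int_{\partial E_0}\vc{u}^m_h\cdot\nu\,dS = \int_{E_0}\Div\vc{u}^m_h\,dx$ so that the outflow terms combine into the single factor $1 + \Delta t\|\Div\vc{u}^m_h\|_{L^\infty}$; no analytic difficulty arises. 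Since this is precisely \cite[Lemma 4.1]{Karlsen1}, I would keep the argument brief and refer there for the routine details.
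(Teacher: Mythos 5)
The paper offers no proof of this lemma; it is imported verbatim from \cite{Karlsen1} (Lemma 4.1 there), so your argument can only be measured against the standard upwind argument it presumably reproduces. Your route is the right one and the core computation is correct: testing \eqref{FEM:contequation-newform} with $\phi_h=\chi_{E_0}$ isolates $\varrho^m_{E_0}$ with the strictly positive coefficient $|E_0|+\Delta t\int_{\partial E_0}(\vc{u}^m_h\cdot\nu)^+\,dS$, the inflow terms carry the neighbouring values with nonpositive weights, and at the minimizing element the bound $\varrho^m_{E'}\ge\varrho^m_{\min}$ lets the two flux integrals recombine into $\Delta t\int_{E_0}\Div\vc{u}^m_h\,dx$ by the divergence theorem (using $\vc{u}^m_h\cdot\nu=0$ on $\pOm$).

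Two points need tightening. First, the last step---replacing $|E_0|+\Delta t\int_{E_0}\Div\vc{u}^m_h\,dx$ by the larger quantity $|E_0|\bigl(1+\Delta t\norm{\Div\vc{u}^m_h}_{L^\infty(\Om)}\bigr)$ on the side multiplying $\varrho^m_{\min}$---is only legitimate once $\varrho^m_{\min}\ge 0$ is known; for a negative $\varrho^m_{\min}$ enlarging its coefficient decreases the product, and for $\Delta t\norm{\Div\vc{u}^m_h}_{L^\infty}>1$ the coefficient $|E_0|+\Delta t\int_{E_0}\Div\vc{u}^m_h$ may itself be negative, so positivity cannot be read off from this identity alone. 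Your earlier remark that the displayed relation ``already shows'' nonnegativity is circular as written (it presupposes the unknown neighbouring values $\varrho^m_{E'}$ are nonnegative); it is the M-matrix argument in your final paragraph that actually delivers $\varrho^m_h\ge 0$, so that argument must logically precede the quantitative estimate rather than appear as an afterthought. Second, the system matrix is \emph{not} strictly diagonally dominant by rows: the $E_0$-row sums to $|E_0|+\Delta t\int_{E_0}\Div\vc{u}^m_h\,dx$, which can be negative. It \emph{is} strictly dominant by columns, since on a shared face $(\vc{u}^m_h\cdot\nu_{E_0})^-=-(\vc{u}^m_h\cdot\nu_{E'})^+$, so each column sums exactly to $|E'|>0$; the transpose is therefore a nonsingular M-matrix, hence so is the matrix itself, and invertibility together with a nonnegative inverse follows. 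With the order of the two steps reversed and the dominance stated column-wise, your proof is complete.
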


Existence of a solution to the nonlinear--implicit  discrete scheme 
follows from a topological degree argument.
This argument is essentially identical to that of \cite[Lemma 4.2]{Karlsen1}
with a minor modification to accommodate the discrete time derivative $\Pth{\vu_h}$.

\begin{lemma}
For each fixed $h > 0$, there exists a solution 
$$
(\varrho^m_{h},\vc{w}^m_{h},\vc{u}^m_{h}) 
\in Q_{h}(\Omega)\times\vc{W}_{h}(\Omega)
\times \vc{V}_{h}(\Omega), \quad 
\vrho^m_h(\cdot)>0, \quad m=1,\dots,M,
$$
to the nonlinear--implicit discrete problem 
posed in Definition \ref {def:num-scheme}. 
\end{lemma}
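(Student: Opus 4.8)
The plan is to fix $h>0$ and argue by induction on the time level $m$; thus it suffices to produce, given $(\vr_h^{m-1},\vu_h^{m-1})$ with $\vr_h^{m-1}>0$, a triple $(\vr_h^m,\vc{w}_h^m,\vu_h^m)$ solving \eqref{FEM:contequation}--\eqref{FEM:momentumeq}, the base case $m=1$ being contained in the hypotheses of Definition \ref{def:num-scheme}. First I would eliminate $\vr_h^m$ and $\vc{w}_h^m$ in favour of $\vu_h^m$. For an arbitrary $\vc{u}_h\in\vc{V}_h(\Om)$ the continuity scheme \eqref{FEM:contequation} (equivalently \eqref{FEM:contequation-newform}) is a square linear system for the piecewise constant unknown whose matrix has strictly positive diagonal (the mass term $\abs{E}$ plus the outflow part of the upwind flux) and nonpositive, diagonally dominated off-diagonal entries; it is therefore a nonsingular $M$-matrix, defining a unique $\vr_h[\vc{u}_h]\in Q_h(\Om)$ depending continuously on $\vc{u}_h$, and Lemma \ref{lemma:vrho-props} gives $\vr_h[\vc{u}_h]>0$ since $\vr_h^{m-1}>0$. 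Likewise the second relation in \eqref{FEM:momentumeq} determines a unique $\vc{w}_h[\vc{u}_h]\in\vc{W}_h(\Om)$ depending linearly on $\vc{u}_h$.

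Solving the scheme then reduces to finding a zero of the continuous map $\mathcal{F}\colon\vc{V}_h(\Om)\to\vc{V}_h(\Om)$ representing, via the $\vc{L}^2$ inner product, the functional
\[
	\vc{v}_h\longmapsto \int_\Om \frac{1}{\Dt}\bigl(\vc{u}_h-\vu_h^{m-1}\bigr)\vc{v}_h + \mu\,\Curl\vc{w}_h[\vc{u}_h]\,\vc{v}_h + \bigl[(\mu+\lambda)\Div\vc{u}_h - p(\vr_h[\vc{u}_h])\bigr]\Div\vc{v}_h\ \dx
\]
(continuity of $\mathcal{F}$ using that $\vr_h[\vc{u}_h]$ stays bounded away from $0$, so $p(\vr_h[\vc{u}_h])$ is continuous in $\vc{u}_h$). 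The existence of a zero follows from the standard topological-degree / Brouwer consequence: if $\langle\mathcal{F}(\vc{u}_h),\vc{u}_h\rangle\ge0$ whenever $\norm{\vc{u}_h}=R$ with $R$ large, then $\mathcal{F}$ vanishes somewhere in the ball of radius $R$. To establish this coercivity I would test $\mathcal{F}$ against $\vc{u}_h$ itself: the discrete acceleration contributes $\tfrac{1}{2\Dt}\bigl(\norm{\vc{u}_h}_{\vc{L}^2(\Om)}^2-\norm{\vu_h^{m-1}}_{\vc{L}^2(\Om)}^2\bigr)$; the vorticity term equals $\mu\norm{\vc{w}_h[\vc{u}_h]}_{\vc{L}^2(\Om)}^2\ge0$ after taking $\vc{\eta}_h=\vc{w}_h[\vc{u}_h]$ in the second equation; the divergence term gives $(\mu+\lambda)\norm{\Div\vc{u}_h}_{L^2(\Om)}^2\ge0$ because $\mu>0$ and $N\lambda+2\mu\ge0$ force $\mu+\lambda\ge0$; and the pressure term is handled by renormalizing the density scheme — testing \eqref{FEM:contequation} with $\phi_h=B'(\vr_h^m)$ for the convex $B(\vr)=\frac{a}{\gamma-1}\vr^\gamma$ (so that $b(\vr):=\vr B'(\vr)-B(\vr)=p(\vr)$), using the convexity bound $B'(\vr_h^m)(\vr_h^m-\vr_h^{m-1})\ge B(\vr_h^m)-B(\vr_h^{m-1})$ and the dissipativity of the upwind flux, yields $\int_\Om p(\vr_h[\vc{u}_h])\Div\vc{u}_h\ \dx\le -\frac{1}{\Dt}\int_\Om\bigl(B(\vr_h[\vc{u}_h])-B(\vr_h^{m-1})\bigr)\ \dx$. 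Discarding the nonnegative contributions, including $\frac{1}{\Dt}\int_\Om B(\vr_h[\vc{u}_h])\ \dx$ since $B\ge0$, leaves $\langle\mathcal{F}(\vc{u}_h),\vc{u}_h\rangle\ge\frac{1}{2\Dt}\norm{\vc{u}_h}_{\vc{L}^2(\Om)}^2-\frac{1}{2\Dt}\norm{\vu_h^{m-1}}_{\vc{L}^2(\Om)}^2-\frac{1}{\Dt}\int_\Om B(\vr_h^{m-1})\ \dx$, which, all norms on the finite-dimensional space $\vc{V}_h(\Om)$ being equivalent, is $\ge0$ once $\norm{\vc{u}_h}=R$ is large enough (with $R$ depending on $h$, $\Dt$, $\vu_h^{m-1}$ and $\vr_h^{m-1}$, but not on $\vc{u}_h$). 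Brouwer's theorem then supplies $\vu_h^m$, and setting $\vr_h^m:=\vr_h[\vu_h^m]>0$, $\vc{w}_h^m:=\vc{w}_h[\vu_h^m]$ closes the induction step.

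The genuinely substantive ingredients — solvability and continuity of the density subproblem and the dissipativity of the renormalized upwind continuity scheme — are inherited essentially verbatim from \cite[Lemmas 4.1--4.2]{Karlsen1}; the only new point is to verify that the added discrete acceleration $\Pth{\vu_h^m}$ does not spoil the a priori estimate. I expect this to be the main (and only mild) obstacle, but in fact it works in our favour, since testing against $\vc{u}_h$ yields the clearly good term $\tfrac{1}{2\Dt}\bigl(\norm{\vc{u}_h}^2-\norm{\vu_h^{m-1}}^2\bigr)$ rather than a term requiring further control.
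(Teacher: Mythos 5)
Your proposal is correct and follows essentially the route the paper itself takes: the paper simply asserts that existence follows from the topological degree argument of \cite[Lemma 4.2]{Karlsen1} with a minor modification for the discrete time derivative, and you reconstruct exactly that argument — elimination of $\vr_h^m$ and $\vc{w}_h^m$ via the invertible linear subproblems, an a priori bound obtained by testing with $\vu_h^m$ together with the renormalized continuity scheme for $B(\vr)=\tfrac{a}{\gamma-1}\vr^\gamma$, and a finite-dimensional topological existence principle (your coercivity corollary of Brouwer being a standard equivalent of the degree argument). You also correctly identify that the only new ingredient relative to \cite{Karlsen1}, the term $\Pth{\vu_h^m}$, contributes the favorable quantity $\tfrac{1}{2\Dt}(\norm{\vu_h^m}^2-\norm{\vu_h^{m-1}}^2)$ and so does not disturb the estimate.
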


Our main result is that, passing if necessary to a subsequence, 
$\Set{(\varrho_{h},\vc{w}_{h},\vc{u}_{h})}_{h>0}$ 
converges to a weak solution. More precisely, there holds

\begin{theorem}[Convergence]\label{theorem:mainconvergence}
Suppose $(\vrho^0, \vu^0)\in L^\gamma(\Om)\cap \vc{L}^2(\Om)$, $\gamma > \frac{N}{2}$.
Let $\Set{(\varrho_{h},\vc{w}_{h},\vc{u}_{h})}_{h>0}$ be a sequence of numerical solutions 
constructed according to \eqref{eq:num-scheme-II} and Definition \ref {def:num-scheme}. 
Then, passing if necessary to a subsequence as $h\to 0$, 
$\vu_h \rightarrow \vu$, a.e in $\Dom$,
$\varrho_{h}\vc{u}_{h} \weak \varrho\vc{u}$ in the sense 
of distributions on $\Dom$, and $\varrho_{h} \rightarrow \varrho$
a.e.~in $\Dom$, where the limit triplet $(\vrho,\vc{w}, \vc{u})$ satisfies the mixed formulation
\eqref{def:mixed-weak}, and thus $(\vrho,\vc{u})$ is a weak 
solution according to Definition \ref{def:weak}.
\end{theorem}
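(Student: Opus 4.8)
The plan is to follow the Lions--Feireisl strategy, adapted to the discrete mixed setting, in the order the paper itself previews in the introduction. First I would establish the basic a priori estimates: testing the momentum scheme \eqref{FEM:momentumeq} with $(\vc{w}_h^m, \vc{u}_h^m)$ and the continuity scheme \eqref{FEM:contequation} with an appropriate renormalization of $\vrho_h^m$ (e.g. $\phi_h = \gamma(\gamma-1)^{-1}(\vrho_h^m)^{\gamma-1}$ or a convexity argument at the discrete level), one obtains the energy inequality giving
\[
\vrho_h \inb L^\infty(0,T;L^\gamma(\Om)), \quad \vc{u}_h \inb L^2(0,T;\mcw(\Om)), \quad \Curl\vc{u}_h=\vc{w}_h \inb L^2, \quad \vc{u}_h \inb L^\infty(0,T;\vc{L}^2(\Om)),
\]
together with control of the numerical dissipation (the upwind jump terms). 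These bounds yield, along a subsequence, weak(-$*$) limits $\vrho_h \weakstar \vrho$, $\vc{u}_h \weakto \vc{u}$, $\vc{w}_h \weakto \vc{w}$, and, via Lemma~\ref{lem:timecompactness} applied to $\vc{u}_h$ (using the momentum scheme to get equicontinuity in time of $t\mapsto \int_\Om \vc{u}_h \vc{v}_h$ against smooth test fields), $\vc{u}_h \to \vc{u}$ in $C([0,T];\vc{L}^2_{\mathrm{weak}})$ and $\vrho_h\vc{u}_h\weakto\vrho\vc{u}$. The next ingredient is the higher integrability of the density, $\vrho_h \inb L^{\gamma+\theta}((0,T)\times\Om)$ for some $\theta>0$ (proved in Section~\ref{sec:higherint} by testing with $\Pi_h^V\Grad\Delta^{-1}[(\vrho_h)^\theta - \avg{(\vrho_h)^\theta}]$), which guarantees $\vrho_h \inb L^2$ when $\gamma > N/2$ suffices for the renormalization machinery and rules out concentration in the pressure term.

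With the weak limits in hand, I would pass to the limit in the momentum scheme \eqref{FEM:momentumeq}: the linear terms converge by weak convergence and the interpolation estimates of Lemma~\ref{lemma:interpolation} (so that $\Pi_h^W\vc{v} \to \vc{v}$, $\Pi_h^V\vc{v}\to\vc{v}$ for smooth $\vc{v}$), yielding that the limit satisfies \eqref{def:mixed-weak} with $p(\vrho_h)$ replaced by its weak limit $\overline{p(\vrho)}$. Likewise, passing to the limit in the upwind DG continuity scheme \eqref{FEM:contequation-newform}, using that the jump/dissipation terms vanish and that $\vrho_h\vc{u}_h\weakto\vrho\vc{u}$, gives that $(\vrho,\vc{u})$ solves the continuity equation \eqref{eq:weak-rho} weakly. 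The remaining — and central — task is to identify $\overline{p(\vrho)} = p(\vrho)$, i.e. to upgrade $\vrho_h \weakstar \vrho$ to a.e. convergence.

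This is where the effective viscous flux argument enters, and it is the main obstacle. The idea is to test the discrete momentum equation \eqref{FEM:momentumeq} with $\vc{v}_h = \Pi_h^V\Grad\Delta^{-1}[\vrho_h]$, which satisfies $\Div\vc{v}_h = \vrho_h$ and, by the commuting diagram and \eqref{eq:Vh-decomp}, is almost $\Curl$-orthogonal up to $O(h)$; compared against the limiting continuous equation tested with $\Grad\Delta^{-1}[\vrho]$, this produces the key identity \eqref{intro:effvisc}, namely $\int\!\!\int\psi\,\eff(\vrho_h,\vc{u}_h)\vrho_h \to \int\!\!\int\psi\,\overline{\eff(\vrho,\vc{u})}\,\vrho$. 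The genuinely new difficulty here (this paper's novelty over \cite{Karlsen1}) is the time-derivative term: one must show
\[
\int\!\!\int \Pth{\vc{u}_h}\,\Pi_h^V\Grad\Delta^{-1}[\vrho_h]\,\dxdt = \int\!\!\int \Delta^{-1}[\Div\vc{u}_h]\,\Pth{\vrho_h}\,\dxdt + O(h),
\]
and then pass to the limit in the right-hand side using the continuity scheme; symmetrically, on the continuous side the corresponding term is $-\int\!\!\int\vc{u}\cdot\Grad\Delta^{-1}[\partial_t\vrho]$, and matching these requires care with the discrete-in-time integration by parts and the boundary/initial terms. A parallel subtlety is that $\vc{u}_h$ is only $\Div$-conforming, so to make sense of products like $\vc{u}_h\cdot\Grad\Delta^{-1}[\vrho_h]$ in the limit one needs \emph{strong} $\vc{L}^2$ convergence of $\vc{u}_h$: I would obtain this via the discrete Hodge decomposition \eqref{eq:Vh-decomp}, $\vc{u}_h = \Curl\vc{\zeta}_h + \vc{z}_h$, noting $\vc{\zeta}_h$ is density-independent (hence strongly convergent by standard mixed-method compactness), while $\vc{z}_h \in \vc{V}_h^{0,\perp}$ satisfies the spatial-translation estimate \eqref{eq:jassa1} (Lemma~\ref{lemma:spacetranslation}) and the weak time-regularity \eqref{eq:jassa2}, so that Lemma~\ref{lemma:aubinlions} applies and forces $\vc{z}_h\to\vc{z}$ strongly, whence $\vc{u}_h\to\vc{u}$ a.e. in $\Dom$. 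Finally, having \eqref{intro:effvisc} and the renormalized continuity equation (Lemma~\ref{lemma:feireisl}, using $\vrho_h\inb L^2$), one runs the standard Feireisl argument: the monotonicity of $p$ gives $\overline{\eff(\vrho,\vc{u})}\,\vrho \ge \overline{\eff(\vrho,\vc{u})\,\vrho}$ combined with convexity of the renormalization functions to conclude $\overline{\vrho\log\vrho} = \vrho\log\vrho$, which by the strict-convexity clause of Lemma~\ref{lem:prelim} yields $\vrho_h \to \vrho$ a.e., hence $\overline{p(\vrho)} = p(\vrho)$, closing the identification and completing the proof of Theorem~\ref{theorem:mainconvergence}.
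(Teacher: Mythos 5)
Your outline follows the paper's proof step for step: energy estimate, higher integrability via the effective viscous flux tested with $\Pi_h^V\Grad\Delta^{-1}[\cdot]$, the discrete Hodge splitting $\vu_h=\Curl\vc{\zeta}_h+\vc{z}_h$ with Lemma \ref{lemma:spacetranslation} and Lemma \ref{lemma:aubinlions} handling $\vc{z}_h$, the commutation of $\Pth{\cdot}$ with $\Grad\Delta^{-1}$ through the continuity scheme, and the $\vrho\log\vrho$/strict-convexity endgame. Two points, however, deserve attention.

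The substantive one is your claim that $\Curl\vc{\zeta}_h$ is ``strongly convergent by standard mixed-method compactness'' because $\vc{\zeta}_h$ is density-independent. Density-independence alone does not give strong $L^2(0,T;\vc{L}^2(\Om))$ convergence: $\Curl\vc{\zeta}_h$ lies in $\Curl\vc{W}_h\subset\vc{V}_h$, which is only div-conforming (and divergence-free), so there is no uniform $\vc{W}^{1,2}$ bound and no spatial translation estimate of the type \eqref{eq:jassa1}; hence the Aubin--Lions route you use for $\vc{z}_h$ is unavailable for this component. The paper closes this gap with a separate argument (Lemmas \ref{lem:lpbound} and \ref{lemma:curlcompact}): the pair $(\vc{w}_h,\vc{\zeta}_h)$ satisfies a closed discrete parabolic subsystem decoupled from $\vrho_h$; one obtains $\Pth{\Curl\vc{\zeta}_h}\inb L^2(0,T;\vc{L}^2(\Om))$ by testing with $\Pth{\Curl\vc{\zeta}_h^m}$, passes to the limit in that subsystem, and then compares the discrete and continuous energy identities, using the strong convergence $\Curl\vc{\zeta}_h^0\to\Curl\vc{\zeta}^0$ of the initial Hodge components (Lemma \ref{lemma:hodge}). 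Norm convergence plus weak convergence then yields strong convergence of both $\vc{w}_h$ and $\Curl\vc{\zeta}_h$. Without some version of this weak--strong/energy argument your Step on strong velocity convergence, and hence everything downstream of it (the product $\vrho_h\vu_h$, the flux identity, Lemma \ref{lemma:densityconv}), does not go through.

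A minor point: early on you assert $\vrho_h\vu_h\weak\vrho\vu$ directly from $\vu_h\to\vu$ in $C([0,T];\vc{L}^2_{\mathrm{weak}})$; a product of two weakly convergent sequences need not converge to the product of the limits, so this convergence should be postponed until after the strong $L^2$ convergence of $\vu_h$ is in hand (as the paper does), or derived via Lemma \ref{lemma:aubinlions}. Since you do establish that strong convergence later, this is an ordering defect rather than a fatal one.
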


\section{Basic estimates}\label{sec:basic-est}
In this section we gather some basic 
estimates for our numerical method.
The results include stability and weak time-continuity 
of both the density and velocity. 
We however commence by recalling (from \cite{Karlsen1})
the following renormalized version of the continuity scheme.

\begin{lemma}[Renormalized continuity scheme]
Fix any $m=1,\ldots,M$ and let the pair $(\vrho_{h}^m, \vc{u}_{h}^{m}) \in Q_{h} \times \vc{V}_{h}$ satisfy the continuity scheme \eqref{FEM:contequation}. 
Then $(\vrho_{h}^m, \vc{u}_{h}^{m})$ also satisfies 
the renormalized continuity scheme
\begin{equation}\label{FEM:renormalized}
	\begin{split}
		&\int_{\Omega}B(\vrho_{h}^m)\phi_{h}\ dx \\
		&\qquad - \Delta t\sum_{\Gamma \in \Gamma^I_{h}} \int_\Gamma \left(B(\vrho^m_-)(\vc{u}^{m}_h \cdot \nu)^+ 
		+ B(\vrho^m_+)(\vc{u}^{m}_h \cdot \nu)^-\right) \jump{\phi_{h}}_\Gamma\ dx \\
		&\qquad + \Delta t \int_{\Omega}b(\vrho_{h}^m)\Div \vc{u}^{m}_{h}\phi_{h}\ dx 
		+  \int_{\Omega}B''(\xi(\vrho_{h}^m, \vrho_{h}^{m-1})) \jump{\vrho_{h}^{m-1}}^2\phi_{h}\ dx  \\
		&\qquad  
		+\Delta t\sum_{\Gamma \in \Gamma^I_{h}}
		\int_{\Gamma}B''(\xi^\Gamma(\vrho^m_{+},\vrho^m_{-}))
		\jump{\vrho^m_{h}}^2_{\Gamma}(\phi_{h})_{-}(\vc{u}_{h}^{m} \cdot \nu)^+ \\
		&\qquad \qquad \qquad \qquad -B''(\xi^\Gamma(\vrho^m_{-},
		\vrho^m_{+}))\jump{\vrho^m_{h}}^2_{\Gamma}(\phi_{h})_{+}(\vc{u}_{h}^{m} \cdot \nu)^-\ dS(x) \\
		&= \int_{\Omega}B(\vrho_{h}^{m-1})\phi_{h}\ dx,
		\qquad \forall \phi_{h} \in Q_{h}(\Omega),
	\end{split}
\end{equation} 
for any $B\in C[0,\infty)\cap C^2(0,\infty)$ with $B(0)=0$ 
and $b(\vrho) := \vrho B'(\vrho) - B(\vrho)$. 
Given two positive real numbers $a_1$ and $a_2$, we denote by $\xi(a_1,a_2)$ and $\xi^\Gamma(a_1,a_2)$ two numbers 
between $a_1$ and $a_2$ (See \cite{Karlsen1} for a precise definition).
\end{lemma}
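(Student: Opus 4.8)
The plan is to derive the renormalized scheme \eqref{FEM:renormalized} directly from the continuity scheme \eqref{FEM:contequation} by a discrete analogue of the chain rule. Since $\vrho_h^m$ is piecewise constant, the natural choice of test function in \eqref{FEM:contequation} is $\phi_h = B'(\vrho_h^m)\psi_h$ for an arbitrary $\psi_h \in Q_h(\Omega)$; this is legitimate because $\vrho_h^m>0$ (Lemma~\ref{lemma:vrho-props}) so $B'(\vrho_h^m)$ is a well-defined element of $Q_h(\Omega)$. The whole proof is then a matter of rewriting each resulting term: the time-difference term, the convective flux term, and collecting the error ("numerical dissipation") terms that a nonlinear function of an upwind scheme inevitably produces. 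These errors are exactly the second-derivative terms $B''(\xi(\cdot,\cdot))\jump{\cdot}^2$ appearing in \eqref{FEM:renormalized}; the points $\xi$ and $\xi^\Gamma$ come from second-order Taylor expansions.

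First I would treat the discrete time derivative. On a single element $E$ with constant values $\vrho_+^{m}$ (inside) and the previous value $\vrho^{m-1}$, a second-order Taylor expansion of $B$ gives
\begin{equation*}
	B(\vrho_h^{m-1}) = B(\vrho_h^m) + B'(\vrho_h^m)\left(\vrho_h^{m-1}-\vrho_h^m\right) + \tfrac12 B''(\xi)\left(\vrho_h^{m-1}-\vrho_h^m\right)^2,
\end{equation*}
for some $\xi = \xi(\vrho_h^m,\vrho_h^{m-1})$ between the two values (here one must keep track of the precise placement of $\xi$ as in \cite{Karlsen1}). Rearranged, this says $B'(\vrho_h^m)(\vrho_h^m-\vrho_h^{m-1}) = B(\vrho_h^m)-B(\vrho_h^{m-1}) + \tfrac12 B''(\xi)\jump{\vrho_h^{m-1}}^2$, which, after multiplying by $\psi_h$, integrating over $\Omega$, and dividing by $\Delta t$, produces the $B(\vrho_h^m)$, $B(\vrho_h^{m-1})$ and the volumetric $B''$ terms in \eqref{FEM:renormalized}.

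Next I would handle the flux term. Plugging $\phi_h = B'(\vrho_h^m)\psi_h$ into the right-hand side of \eqref{FEM:contequation}, each interior face $\Gamma$ contributes a term involving $\jump{B'(\vrho_h^m)\psi_h}_\Gamma$. I would split $\jump{B'(\vrho_h^m)\psi_h}_\Gamma$ using the product/jump identity $\jump{ab} = a_-\jump{b} + b_+\jump{a}$ (or its symmetric variant), and for the factor $\jump{B'(\vrho_h^m)}_\Gamma = B'(\vrho_+^m)-B'(\vrho_-^m)$ perform another second-order Taylor expansion, writing $B'(\vrho_+^m)-B'(\vrho_-^m) = B''(\vrho_-^m)\jump{\vrho_h^m}_\Gamma + \dots$; combined with the algebraic identity $\vrho B'(\vrho)-B(\vrho) = b(\vrho)$ this converts the upwind flux of $\vrho_h^m$ against $B'(\vrho_h^m)$ into the upwind flux of $B(\vrho_h^m)$, plus the compressibility term $\Delta t\int_\Omega b(\vrho_h^m)\Div\vc{u}_h^m\,\psi_h\,dx$ (using that $\Div\vc{u}_h^m$ is the element-wise divergence, which relates to the sum of face fluxes), plus the face $B''$ error terms with weights $(\vc{u}_h^m\cdot\nu)^\pm$. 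The only subtlety is the careful bookkeeping of upwind directions and signs, and identifying the intermediate values $\xi^\Gamma(\vrho_+^m,\vrho_-^m)$ and $\xi^\Gamma(\vrho_-^m,\vrho_+^m)$ consistently with \cite{Karlsen1}.

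The main obstacle is not any single estimate but the combinatorial precision required: one must match the upwind "$+$" and "$-$" traces, the orientation of $\nu$, and the placement of the Taylor remainder points so that all error terms land with the correct sign and the correct test-function trace $(\psi_h)_\pm$. Because this is essentially identical to \cite[Lemma on renormalized continuity]{Karlsen1}, whose density scheme is the same upwind dG discretization, I would carry out the computation exactly as there, noting only that the extra implicit time-derivative term $\Pth{\vc{u}_h}$ in the momentum equation \eqref{FEM:momentumeq} plays no role here — the renormalization concerns the continuity scheme alone — so no modification of that argument is needed.
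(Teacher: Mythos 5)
Your proposal is correct and is exactly the argument behind this lemma: the paper itself gives no proof here but simply recalls the result from \cite{Karlsen1}, where it is established precisely by testing the upwind scheme with $B'(\vrho_h^m)\phi_h$ and collecting the second-order Taylor remainders in time and across faces as you describe. Your observation that the new $\Pth{\vu_h}$ term in the momentum scheme is irrelevant to this purely continuity-equation identity is also the reason the paper can quote the companion result verbatim.
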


In what follows we will need the following discrete Hodge decomposition.

\begin{lemma}\label{lemma:hodge}
\solutiontext For each fixed $h>0$, there exist unique 
functions $\vc{\zeta}_{h}^m \in \vc{W}_{h}^{0,\perp}$ and 
$\vc{z}_{h}^m \in \vc{V}_{h}^{0,\perp}$ such that
\begin{equation}\label{eq:hodge-disc-time}
	\vc{u}_{h}^m = \Curl \vc{\zeta}_{h}^m 
	+ \vc{z}_{h}^m, \qquad m=0,\ldots,M.
\end{equation}
Moreover, if we let $\vc{\zeta}_{h}(t,x)$, $\vc{z}_{h}(t,x)$ denote 
the functions obtained by extending, as in \eqref{eq:num-scheme-II}, 
$\{\vc{\zeta}_{h}^m\}_{m=1}^M$, $\{\vc{z}_{h}^m\}_{m=1}^M$ 
to the whole of $(0,T] \times \Omega$, then
\begin{equation*}
	\vc{u}_{h}(t,\cdot) = \Curl \vc{\zeta}_{h}(\cdot,t)
	+\vc{z}_{h}(\cdot,t), \qquad t \in (0,T).
\end{equation*}

Finally, let $\Curl \vc{\zeta}^0 \in \vc{L}^2(\Om)$ and $\Grad s^0 \in \vc{L}^2(\Om)$ satisfy
the standard continuous Hodge decomposition 
$\vu^0 = \Curl \vc{\zeta}^0 + \Grad s^0$. Then,
$$
	\Curl \vc{\zeta}^0_h \rightarrow \Curl \vc{\zeta}^0,\quad \vc{z}_h^0 \rightarrow \Grad s^0,\quad \text{in }\vc{L}^2(\Om),
$$
where $\vc{\zeta}_h^0$ and $\vc{z}_h^0$ are given by \eqref{eq:hodge-disc-time}.

\end{lemma}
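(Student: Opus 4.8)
The plan is to prove the three assertions in order, building on the discrete de Rham structure recorded in \eqref{eq:Wh-decomp}--\eqref{eq:Poincare2} and the commuting diagram. For the pointwise-in-time splitting, fix $h>0$ and $m$. I would first note that \eqref{eq:Vh-decomp} is in fact the $\vc{L}^2(\Om)$-orthogonal decomposition $\vc{V}_h=\Curl\vc{W}_h\oplus\vc{V}_h^{0,\perp}$ with $\Curl\vc{W}_h=\{\vc{v}_h\in\vc{V}_h:\Div\vc{v}_h=0\}$: indeed $\Curl\vc{W}_h\subseteq\{\Div\vc{v}_h=0\}$ since $\Div\Curl=0$, and conversely, if $\Div\vc{v}_h=0$ then writing $\vc{v}_h=\Curl\vc{w}_h+\vc{z}_h$ per \eqref{eq:Vh-decomp} forces $\vc{z}_h\in\{\Div=0\}\cap\vc{V}_h^{0,\perp}=\{0\}$. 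Accordingly I define $\vc{z}_h^m$ to be the $\vc{L}^2(\Om)$-orthogonal projection of $\vc{u}_h^m$ onto $\vc{V}_h^{0,\perp}$, so that $\vc{u}_h^m-\vc{z}_h^m\in\Curl\vc{W}_h$. By \eqref{eq:Wh-decomp} and $\Curl\Grad=0$ one has $\Curl\vc{W}_h=\Curl\vc{W}_h^{0,\perp}$, and $\Curl$ is injective on $\vc{W}_h^{0,\perp}$ (an element of the kernel lies both in $\vc{W}_h^{0,\perp}$ and in $\{\Curl=0\}=(\vc{W}_h^{0,\perp})^\perp$, hence vanishes), so there is a unique $\vc{\zeta}_h^m\in\vc{W}_h^{0,\perp}$ with $\Curl\vc{\zeta}_h^m=\vc{u}_h^m-\vc{z}_h^m$; uniqueness of $\vc{z}_h^m$ and $\vc{\zeta}_h^m$ follows from these facts. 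The second assertion is then immediate, since the construction is carried out independently on each time level and $\Curl$ differentiates only in $x$; extending $\{\vc{\zeta}_h^m\}$, $\{\vc{z}_h^m\}$ as in \eqref{eq:num-scheme-II} gives $\vc{u}_h(t,\cdot)=\Curl\vc{\zeta}_h(t,\cdot)+\vc{z}_h(t,\cdot)$ for every $t\in(0,T)$.

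For the convergence of the initial data, I would first show $\vc{u}_h^0\to\vu^0$ strongly in $\vc{L}^2(\Om)$. By \eqref{eq:l2proj}, $\vc{u}_h^0$ is the $\vc{L}^2(\Om)$-orthogonal projection of $\vu^0$ onto $\vc{V}_h$, so it suffices that $\bigcup_{h>0}\vc{V}_h(\Om)$ is dense in $\vc{L}^2(\Om)$. This holds because $\vc{C}_c^\infty(\Om)$ is dense in $\vc{L}^2(\Om)$ and, for $\vc{v}\in\vc{C}_c^\infty(\Om)$, the interpolant $\Pi_h^V\vc{v}$ lies in $\vc{V}_h(\Om)$ (its boundary degrees of freedom vanish because $\vc{v}\equiv0$ near $\pOm$) and satisfies $\norm{\vc{v}-\Pi_h^V\vc{v}}_{\vc{L}^2(\Om)}\le Ch\norm{\Grad\vc{v}}_{\vc{L}^2(\Om)}\to0$ by Lemma \ref{lemma:interpolation}. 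In particular $\norm{\vc{u}_h^0}_{\vc{L}^2(\Om)}\to\norm{\vu^0}_{\vc{L}^2(\Om)}$, and since $\norm{\Curl\vc{\zeta}_h^0}_{\vc{L}^2(\Om)}^2+\norm{\vc{z}_h^0}_{\vc{L}^2(\Om)}^2=\norm{\vc{u}_h^0}_{\vc{L}^2(\Om)}^2$, both components are bounded in $\vc{L}^2(\Om)$.

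Next I would identify their weak limits. Along a subsequence, $\Curl\vc{\zeta}_h^0\weak\vc{a}$ and $\vc{z}_h^0\weak\vc{b}$ in $\vc{L}^2(\Om)$ with $\vc{a}+\vc{b}=\vu^0$. From $\Div\Curl\vc{\zeta}_h^0=0$ we get $\Div\vc{a}=0$; and since $\Curl\vc{\zeta}_h^0\in\vc{V}_h\subset\Hdiv$, integration by parts gives $\int_\Om\Curl\vc{\zeta}_h^0\cdot\Grad\phi\,dx=0$ for all $\phi\in C^\infty(\cOm)$, which passes to the limit to yield $\langle\vc{a}\cdot\nu,\phi\rangle_{\pOm}=0$, i.e. $\vc{a}\cdot\nu|_{\pOm}=0$. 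For $\vc{b}$: as $\vc{z}_h^0\in\vc{V}_h^{0,\perp}$ is orthogonal to $\Curl\vc{W}_h$, we have $\int_\Om\vc{z}_h^0\cdot\Curl\vc{\eta}_h\,dx=0$ for every $\vc{\eta}_h\in\vc{W}_h$; taking $\vc{\eta}_h=\Pi_h^W\vc{\eta}$ with $\vc{\eta}\in\vc{C}_c^\infty(\Om)$ and using the commuting-diagram identity $\Curl\Pi_h^W\vc{\eta}=\Pi_h^V\Curl\vc{\eta}$ together with $\Pi_h^V\Curl\vc{\eta}\to\Curl\vc{\eta}$ in $\vc{L}^2(\Om)$ (Lemma \ref{lemma:interpolation}), we obtain $\int_\Om\vc{b}\cdot\Curl\vc{\eta}\,dx=0$, hence $\Curl\vc{b}=0$ in $\Om$. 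Since $\Om$ is convex, hence simply connected, $\vc{b}=\Grad\phi$ for some $\phi\in W^{1,2}(\Om)$. Thus $\vu^0=\vc{a}+\vc{b}$ splits $\vu^0$ into an $\vc{L}^2$-orthogonal pair consisting of a solenoidal field with vanishing normal trace and a gradient; by uniqueness of the continuous Hodge decomposition, $\vc{a}=\Curl\vc{\zeta}^0$ and $\vc{b}=\Grad s^0$. As the limits are uniquely determined, the whole sequences converge weakly. Finally, both decompositions being $\vc{L}^2$-orthogonal, $\norm{\Curl\vc{\zeta}_h^0}_{\vc{L}^2(\Om)}^2+\norm{\vc{z}_h^0}_{\vc{L}^2(\Om)}^2=\norm{\vc{u}_h^0}_{\vc{L}^2(\Om)}^2\to\norm{\Curl\vc{\zeta}^0}_{\vc{L}^2(\Om)}^2+\norm{\Grad s^0}_{\vc{L}^2(\Om)}^2$; combined with weak lower semicontinuity of each summand this forces convergence of each norm, and weak convergence plus norm convergence in the Hilbert space $\vc{L}^2(\Om)$ delivers the stated strong convergence.

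The step I expect to be the crux is the identification of the weak limits: verifying that the limit of $\vc{z}_h^0$ is curl-free, which hinges on the commuting-diagram property $\Curl\Pi_h^W=\Pi_h^V\Curl$ and the interpolation error bounds, and that the limit of $\Curl\vc{\zeta}_h^0$ retains the zero normal trace, so that uniqueness of the continuous Hodge decomposition can be invoked. The density of $\bigcup_h\vc{V}_h(\Om)$ in $\vc{L}^2(\Om)$ is the one place where the absence of any boundary condition or extra regularity on $\vu^0$ must be accommodated, but it is routine; the remaining bookkeeping with the discrete complex is standard.
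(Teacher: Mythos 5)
Your proof is correct, and for the convergence of the initial data it takes a genuinely different route from the paper's. The paper works component by component: it tests \eqref{eq:l2proj} with $\Curl \Pi_h^W\vc{\phi}$ to identify the weak limit of $\Curl\vc{\zeta}_h^0$ as $\Curl\vc{\zeta}^0$, then tests with $\Curl\vc{\zeta}_h^0$ itself to get norm convergence (hence strong convergence) of that component, then with $\vu_h^0$ to get strong convergence of the full field, and finally recovers $\vc{z}_h^0\to\Grad s^0$ by expanding $\norm{\vu_h^0-\vu^0}^2$. You instead establish strong convergence of $\vu_h^0$ first via the best-approximation property and density of $\bigcup_h\vc{V}_h$ in $\vc{L}^2(\Om)$, identify the weak limits of \emph{both} components through their structural properties (divergence-free with vanishing normal trace versus curl-free, using the commuting diagram for the latter) and uniqueness of the continuous Hodge decomposition, and then upgrade both to strong convergence at once via the Pythagorean identity and weak lower semicontinuity. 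Your treatment is more symmetric and makes explicit the Hodge-uniqueness fact that the paper's terse step $\overline{\Curl\vc{\zeta}^0}=\Curl\vc{\zeta}^0$ (deduced from testing against curls only) implicitly relies on; the paper's version is shorter because it never needs to discuss traces or the simple connectedness of $\Om$, extracting everything from the variational identity \eqref{eq:l2proj}. Your added detail on existence and uniqueness of the discrete splitting (exactness $\Curl\vc{W}_h=\Set{\vc{v}_h\in\vc{V}_h:\Div\vc{v}_h=0}$ and injectivity of $\Curl$ on $\vc{W}_h^{0,\perp}$) is also sound; the paper simply cites \eqref{eq:Vh-decomp} for this.
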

\begin{proof}
The  first two statements are consequences of \eqref{eq:Vh-decomp}.

To prove the last statement, fix any $\vc{\phi} \in \vc{C}_c^\infty(\Om)$ and set $\vc{v}_h = \Pi_h^W \vc{\phi}$ 
in \eqref{eq:l2proj} to obtain
\begin{equation}\label{eq:init1}
	\int_\Om \Curl \vc{\zeta}^0_h \Curl( \Pi_h^W\vc{\phi})~dx
	= \int_\Om \Curl \vc{\zeta}^0 \Curl( \Pi_h^W\vc{\phi})~dx,
\end{equation}
where we have used that $\vc{u}_h^0 = \Curl \vc{\zeta}_h^0 + \vc{z}_h^0$ and $\int_\Om \vc{z}_h^0 \Curl \Pi_h^W \vc{\phi}~dx = 0$.
Now, since $\|\Curl \vc{\zeta}_h^0\|_{\vc{L}^2(\Om)} \leq C\|\vu_h^0\|_{\vc{L}^2(\Om)} \leq C\|\vu^0\|_{\vc{L}^2(\Om)}$, 
there exists a function $\overline{\Curl \vc{\zeta}^0}$ such that
$\Curl \vc{\zeta}_h^0 \weak \overline{\Curl \vc{\zeta}^0}$ in $\vc{L}^2(\Om)$. 
Sending $h \rightarrow 0$ in \eqref{eq:init1} yields
\begin{equation*}
	\int_\Om (\overline{\Curl \vc{\zeta}^0} - \Curl \vc{\zeta}^0)\Curl \vc{\phi}~dx = 0, \quad \forall \vc{\phi} \in \vc{C}_c^\infty(\Om).
\end{equation*}
Hence, $\overline{\Curl \vc{\zeta}^0} = \Curl \vc{\zeta}^0$ a.e in $\Om$.

Next, let $\vc{v}_h = \Curl \vc{\zeta}_h$ in \eqref{eq:l2proj} to discover
\begin{equation*}
	\|\Curl \vc{\zeta}_h^0\|_{\vc{L}^2(\Om)}^2 = \int_\Om \Curl \vc{\zeta}^0 \Curl \vc{\zeta}_h^0 ~dx
	\rightarrow \|\Curl \vc{\zeta}^0\|_{\vc{L}^2(\Om)}^2,
\end{equation*}
as $h \rightarrow 0$. Then, $\Curl \vc{\zeta}_h^0 \rightarrow \Curl \vc{\zeta}^0$ in $\vc{L}^2(\Om)$.

By setting $\vc{v}_h = \vu_h^0$ in \eqref{eq:l2proj} we deduce
\begin{equation*}
	\|\vu_h^0\|_{\vc{L}^2(\Om)}^2 = \int_\Om \vu^0 \vu_h^0~dx \rightarrow \|\vu^0\|_{\vc{L}^2(\Om)}^2,
\end{equation*}
as $h \rightarrow 0$. Hence, $\vu_h^0 \rightarrow \vu^0$ in $\vc{L}^2(\Om)$. 

Finally, a direct calculation shows that
\begin{equation*}
	\begin{split}
		0 = \lim_{h \rightarrow 0}\|\vc{u}_h^0 - \vu^0\|_{\vc{L}^2(\Om)}^2 
		& = \lim_{h \rightarrow 0}\left[\|\Curl \vc{\zeta}_h^0 - \Curl \vc{\zeta}^0\|_{\vc{L}^2(\Om)}^2 + \|\vc{z}_h^0 - \Grad s^0\|_{\vc{L}^2(\Om)}^2\right] \\
		& \qquad - 2\lim_{h \rightarrow 0}\left[\int_\Om (\Curl \vc{\zeta}_h^0 - \Curl \vc{\zeta}^0)\,(\vc{z}_h^0 - \Grad s^0)~dx\right],
	\end{split}
\end{equation*}
where the last term converges to zero 
since $\Curl \vc{\zeta}_h^0 \to \Curl \vc{\zeta}^0$
in $\vc{L}^2(\Om)$. Thus, $\vc{z}_h^0 \rightarrow \Grad s^0$
in $\vc{L}^2(\Om)$ and the proof is complete.

\end{proof}

We now derive a basic stability estimate satisfied 
by the numerical scheme.

\begin{lemma}[Stability]\label{lemma:stability} \solutiontext 
For $\vr(\cdot)>0$, let
\begin{equation*}
	\mathcal{E}(\varrho, \vc{u})  = \frac{a}{\gamma-1}\vr^\gamma + \frac{1}{2}|\vu|^2.
\end{equation*}
For any $m=1,\dots,M$, there holds
\begin{equation*}
	\begin{split}
		&\int_{\Omega} \mathcal{E}(\varrho_{h}^m, \vc{u}_{h}^m)\ dx  
		 +\sum_{k=1}^m \Delta t\|\vc{u}^k_{h}\|_{\vc{W}^{\textnormal{div}, 2}(\Om)}^2 
			+ \sum_{k=1}^m \Delta t\|\vc{w}_{h}^k\|_{\vc{W}^{\textnormal{curl},2}(\Om)}^2 
			+ \mathcal{N}_\text{diffusion}^m \\
		&\qquad	 \leq \int_{\Omega} \mathcal{E}(\vrho^0, \vc{u}^{0}) \ dx,
	\end{split}
\end{equation*}
where the numerical diffusion term is given by
\begin{equation*}
	\begin{split}
		\mathcal{N}_\text{diffusion}^m 
		&= \frac{1}{2}\sum_{k=1}^m\|\jump{\vc{u}_{h}^{m-1}}\|_{\vc{L}^2(\Omega)}^2 
		+ \sum_{k=1}^{m}\int_{\Omega} P''(\xi^{k-\frac{1}{2}}(\varrho_{h}^k, \varrho_{h}^{k-1}))\jump{\varrho^{k-1}_{h}}^2\ dx \\
		&\qquad	+ \sum_{k=1}^{m}\sum_{\Gamma \in \Gamma_h^I}\Delta t
		\int_{\Gamma}P''(\varrho^k_{\dagger})\jump{\varrho^k_{h}}_{\Gamma}^2
		\abs{\vc{u}^k_{h} \cdot \nu}\ dx.
	\end{split}
\end{equation*}
\end{lemma}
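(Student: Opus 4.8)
The plan is to derive the energy identity at the level of a single time step by using the natural test functions in the scheme, and then sum over time steps. More precisely, I would first take $\phi_h = \frac{a\gamma}{\gamma-1}(\varrho_h^m)^{\gamma-1}$ (equivalently, use the renormalized continuity scheme \eqref{FEM:renormalized} with $B(\varrho) = \frac{a}{\gamma-1}\varrho^\gamma$, so that $b(\varrho) = a\varrho^\gamma = p(\varrho)$) to obtain an identity for the internal energy $\frac{a}{\gamma-1}(\varrho_h^m)^\gamma$. This produces three nonnegative contributions: the face term with $B''(\xi^\Gamma)\jump{\varrho_h^m}_\Gamma^2 |\vc{u}_h^m\cdot\nu|$ after combining the two upwind-side terms, the cell term with $B''(\xi^{k-1/2})\jump{\varrho_h^{m-1}}^2$, and the term $\Delta t\int_\Omega p(\varrho_h^m)\Div\vc{u}_h^m\, dx$, which will cancel against the corresponding pressure term in the momentum equation. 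Convexity of $\varrho\mapsto \frac{a}{\gamma-1}\varrho^\gamma$ (valid since $\gamma>1$) guarantees $B''\ge 0$, so these are genuine numerical dissipation terms.

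Next I would test the first equation of \eqref{FEM:momentumeq} with $\vc{v}_h = \vc{u}_h^m$ and the second with $\vc{\eta}_h = \vc{w}_h^m$. From the second equation, $\int_\Omega |\vc{w}_h^m|^2\,dx = \int_\Omega \vc{u}_h^m\Curl\vc{w}_h^m\,dx$, which lets me replace the coupling term $\mu\int_\Omega\Curl\vc{w}_h^m\,\vc{u}_h^m\,dx$ by $\mu\|\vc{w}_h^m\|_{\vc{L}^2(\Omega)}^2$, and $\|\Curl\vc{w}_h^m\|_{\vc{L}^2}$ is controlled by noting $\int_\Omega |\Curl \vc{w}_h^m|^2$ can be recovered from testing the momentum equation appropriately, or more simply by observing that once we have $\vc{u}_h^m$ bounded in $\vc{W}^{\operatorname{div},2}$ and $\vc{w}_h^m = $ (the discrete curl of $\vc{u}_h^m$ in a weak sense), a discrete Poincaré-type bound gives $\|\vc{w}_h^m\|_{\vc{W}^{\operatorname{curl},2}}$; I would in fact extract $\|\Curl \vc{w}_h^m\|^2_{\vc{L}^2}$ directly by testing with $\vc{v}_h=\Curl\vc{w}_h^m$-type arguments from the decomposition \eqref{eq:Wh-decomp}–\eqref{eq:Wh-decomp} (it is cleanest to keep $\vc{w}_h^m\in\vc{W}_h^{0,\perp}$ and apply \eqref{eq:Poincare2}). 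The kinetic-energy term requires the elementary discrete identity
\begin{equation*}
	\int_\Omega \Pth{\vc{u}_h^m}\,\vc{u}_h^m\,dx
	= \frac{1}{2\Delta t}\left(\|\vc{u}_h^m\|_{\vc{L}^2(\Omega)}^2 - \|\vc{u}_h^{m-1}\|_{\vc{L}^2(\Omega)}^2 + \|\vc{u}_h^m - \vc{u}_h^{m-1}\|_{\vc{L}^2(\Omega)}^2\right),
\end{equation*}
which is just $a(a-b) = \frac12(a^2-b^2+(a-b)^2)$ applied componentwise; this is exactly what produces the term $\frac12\|\jump{\vc{u}_h^{m-1}}\|_{\vc{L}^2}^2$ in $\mathcal{N}_{\text{diffusion}}^m$. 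The term $(\mu+\lambda)\int_\Omega |\Div\vc{u}_h^m|^2\,dx$ appears with a good sign since $N\lambda+2\mu\ge 0$ implies $\mu+\lambda\ge \mu - \frac{(N-2)}{N}\mu > 0$ for $N=2,3$ — I should double-check this gives the coercivity of the $\vc{W}^{\operatorname{div},2}$ norm together with the $\mu\|\vc{u}_h^m\|_{\vc{L}^2}^2$-type control; actually the cleanest route is that $\mu\|\vc{w}_h^m\|^2 + (\mu+\lambda)\|\Div\vc{u}_h^m\|^2$ combined with $\|\vc{u}_h^m\|_{\vc{L}^2}^2 \le C(\|\Curl\vc{u}_h^m\|^2+\|\Div\vc{u}_h^m\|^2)$ on the discrete space (via \eqref{eq:Poincare1}, \eqref{eq:Poincare2} and the Hodge decomposition of Lemma \ref{lemma:hodge}) yields the stated $\vc{W}^{\operatorname{div},2}$ and $\vc{W}^{\operatorname{curl},2}$ norms on the left.

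Adding the energy identity for the density step and the momentum step at time level $m$, the pressure terms $\pm\Delta t\int_\Omega p(\varrho_h^m)\Div\vc{u}_h^m\,dx$ cancel exactly, leaving
\begin{equation*}
	\int_\Omega \mathcal{E}(\varrho_h^m,\vc{u}_h^m)\,dx - \int_\Omega\mathcal{E}(\varrho_h^{m-1},\vc{u}_h^{m-1})\,dx
	+ \Delta t\,(\text{coercive terms at level }m) + (\text{numerical diffusion at level }m) = 0.
\end{equation*}
Summing over $m=1,\dots,k$ telescopes the energy differences to $\int_\Omega\mathcal{E}(\varrho_h^m,\vc{u}_h^m) - \int_\Omega\mathcal{E}(\varrho_h^0,\vc{u}_h^0)$, and then I pass from the discrete initial data to the continuous data using that $\varrho_h^0\to\varrho^0$ in $L^1$ together with (if needed) a uniform $L^\gamma$ bound on $\varrho_h^0$, and $\vc{u}_h^0$ is the $\vc{L}^2$-projection of $\vc{u}^0$ so $\|\vc{u}_h^0\|_{\vc{L}^2}\le\|\vc{u}^0\|_{\vc{L}^2}$; this gives the inequality (rather than equality) against $\int_\Omega\mathcal{E}(\varrho^0,\vc{u}^0)\,dx$. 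Dropping the nonnegative face-diffusion and cell-diffusion terms where not wanted, and keeping everything else, yields the claimed bound.

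The main obstacle I anticipate is bookkeeping in the density step: one must verify carefully that choosing $\phi_h = \frac{a\gamma}{\gamma-1}(\varrho_h^m)^{\gamma-1}$ (which is a legitimate element of $Q_h$ since $\varrho_h^m>0$ is piecewise constant) in \eqref{FEM:contequation}, or equivalently applying \eqref{FEM:renormalized} with the right $B$, reproduces precisely the internal-energy difference $\int_\Omega B(\varrho_h^m) - \int_\Omega B(\varrho_h^{m-1})$ on the left and the three diffusion contributions with the correct signs and the correct intermediate points $\xi^{k-1/2}$, $\xi^\Gamma$. The sign of $\mu+\lambda$ and the resulting coercivity of the full $\vc{W}^{\operatorname{div},2}$ norm (as opposed to just $\|\Div\vc{u}_h^m\|$) is the other point needing care, but that is handled by the discrete Hodge decomposition of Lemma \ref{lemma:hodge} and the Poincaré inequalities \eqref{eq:Poincare1}–\eqref{eq:Poincare2}. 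Everything else is the standard telescoping-in-time argument.
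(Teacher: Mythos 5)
The paper itself omits this proof (it refers to Lemma 5.3 of \cite{Karlsen1}), so I can only judge your proposal on its own terms. The core of it --- the renormalized continuity scheme with $B(\vr)=\frac{a}{\gamma-1}\vr^\gamma$ so that $b=p$, testing \eqref{FEM:momentumeq} with $\vc{v}_h=\vc{u}_h^m$ and $\vc{\eta}_h=\vc{w}_h^m$, the identity $a(a-b)=\frac12(a^2-b^2+(a-b)^2)$, cancellation of the pressure terms, and telescoping --- is the standard argument and correctly produces the energy, the term $\mu\sum_k\Delta t\|\vc{w}_h^k\|_{\vc{L}^2}^2$, the term $(\mu+\lambda)\sum_k\Delta t\|\Div\vc{u}_h^k\|_{L^2}^2$, and all three pieces of $\mathcal{N}^m_{\text{diffusion}}$. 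The $\vc{L}^2$ parts of the $\vc{W}^{\text{div},2}$ norm also follow (most directly from $\sup_k\|\vc{u}_h^k\|_{\vc{L}^2}^2\le 2\mathcal{E}^0$, no Hodge decomposition needed).

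The genuine gap is the term $\sum_k\Delta t\|\Curl\vc{w}_h^k\|_{\vc{L}^2}^2$, which the lemma claims and which the later analysis (Lemmas \ref{lemma:velocitycont}, \ref{lem:lpbound}, \ref{lemma:interpolationerrorbound}) actually uses. Neither of your two suggestions delivers it. The energy identity only controls $\|\vc{w}_h^k\|_{\vc{L}^2}$, not its curl; and there is no $h$-uniform inequality bounding $\|\Curl\vc{w}_h\|_{\vc{L}^2}$ by $\|\vc{w}_h\|_{\vc{L}^2}$ or by $\|\vc{u}_h\|_{\vc{W}^{\text{div},2}}$ --- the inverse estimate of Lemma \ref{lemma:inverse} costs $h^{-1}$, and the discrete Poincar\'e inequality \eqref{eq:Poincare2} points in the opposite direction, so ``a discrete Poincar\'e-type bound gives $\|\vc{w}_h^m\|_{\vc{W}^{\text{curl},2}}$'' is not correct. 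The missing argument is a separate test: take $\vc{v}_h=\Curl\vc{w}_h^m\in\vc{V}_h$ in the first equation of \eqref{FEM:momentumeq}; since $\Div\Curl\vc{w}_h^m=0$ this isolates $\mu\|\Curl\vc{w}_h^m\|_{\vc{L}^2}^2=-\int_\Om\Pth{\vc{u}_h^m}\Curl\vc{w}_h^m\,dx$, and using the second equation at time levels $m$ and $m-1$ (both with test function $\vc{w}_h^m$) the right-hand side equals $-\int_\Om\Pth{\vc{w}_h^m}\vc{w}_h^m\,dx\le-\frac{1}{2\Delta t}\bigl(\|\vc{w}_h^m\|_{\vc{L}^2}^2-\|\vc{w}_h^{m-1}\|_{\vc{L}^2}^2\bigr)$, which telescopes (and requires a uniform bound on the initial discrete vorticity). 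This is precisely the ingredient that is \emph{not} present in the semi-stationary case of \cite{Karlsen1}, where the same test function gives $\Curl\vc{w}_h^m\equiv 0$. Two smaller points: your arithmetic for $\mu+\lambda$ is off --- $N\lambda+2\mu\ge0$ gives $\mu+\lambda\ge\frac{N-2}{N}\mu$, which vanishes for $N=2$, so strict coercivity in $\Div\vc{u}_h$ is not automatic in that borderline case; and the passage from $\int_\Om\mathcal{E}(\vr_h^0,\vc{u}_h^0)$ to $\int_\Om\mathcal{E}(\vr^0,\vc{u}^0)$ needs $\int_\Om(\vr_h^0)^\gamma dx\le\int_\Om(\vr^0)^\gamma dx$ (e.g.\ $\vr_h^0=\Pi_h^Q\vr^0$ and Jensen), not merely $L^1$ convergence plus a uniform $L^\gamma$ bound.
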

\begin{proof}
The proof is almost identical to that of Lemma 5.3 in \cite{Karlsen1} 
and follows directly from standard arguments. We omit the details. 
\end{proof}

Since the finite element spaces are not conforming in $W^{1,2}(\Om)$ it is not clear that
the velocity and vorticity are embedded in $L^{2^*}(\Om)$. Knowing this is 
essential for the later convergence analysis. 

\begin{lemma}
\label{lemma:embeddingvelocity}
\solutiontext Then 
\begin{equation*}
\vc{w}_{h} \inb L^2(0,T;\vc{L}^{2^*}(\Omega)), \qquad \vc{u}_{h} \inb L^2(0,T; \vc{L}^{2^*}(\Omega)),
\end{equation*}
where $2^* = 6$ if $N=3$ and $2^*$ is any large finite number if $N=2$.
\end{lemma}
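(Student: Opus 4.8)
The plan is to use the discrete Hodge decomposition of Lemma~\ref{lemma:hodge} together with the Sobolev-type embedding for the complemented spaces established in Lemma~\ref{lemma:embedding}, and then control the various pieces by the stability bounds from Lemma~\ref{lemma:stability}. First I would treat the vorticity. Using the second equation in \eqref{FEM:momentumeq}, the decomposition \eqref{eq:Wh-decomp} writes $\vc{w}_h^m = \Grad \sigma_h^m + \vc{\zeta}_h^{\perp,m}$ with $\vc{\zeta}_h^{\perp,m} \in \vc{W}_h^{0,\perp}$. I would show that the gradient part is actually zero (or harmless): testing with $\vc{\eta}_h = \Grad \phi_h$ in the second equation of \eqref{FEM:momentumeq} and using $\Curl \Grad = 0$ gives $\int_\Om \vc{w}_h^m \Grad \phi_h \, dx = 0$ for all $\phi_h \in S_h$, so $\vc{w}_h^m \in \vc{W}_h^{0,\perp}$. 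By Lemma~\ref{lemma:embedding}(2), $\|\vc{w}_h^m\|_{\vc{L}^{2^*}(\Om)} \leq C \|\Curl \vc{w}_h^m\|_{\vc{L}^2(\Om)}$, and then multiplying by $\Delta t$ and summing over $m$, the right-hand side is bounded by $\sum_m \Delta t \|\vc{w}_h^m\|_{\vc{W}^{\mathrm{curl},2}(\Om)}^2$, which is controlled uniformly in $h$ by Lemma~\ref{lemma:stability}. This yields $\vc{w}_h \inb L^2(0,T;\vc{L}^{2^*}(\Om))$.

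Next I would treat the velocity via \eqref{eq:hodge-disc-time}, writing $\vc{u}_h^m = \Curl \vc{\zeta}_h^m + \vc{z}_h^m$ with $\vc{\zeta}_h^m \in \vc{W}_h^{0,\perp}$ and $\vc{z}_h^m \in \vc{V}_h^{0,\perp}$. For the curl part, the second equation of \eqref{FEM:momentumeq} with $\vc{\eta}_h = \vc{\zeta}_h^m$ gives $\int_\Om \vc{w}_h^m \vc{\zeta}_h^m \, dx = \int_\Om \vc{u}_h^m \Curl \vc{\zeta}_h^m \, dx = \|\Curl \vc{\zeta}_h^m\|_{\vc{L}^2(\Om)}^2$; combined with the Poincaré inequality \eqref{eq:Poincare2} applied to $\vc{\zeta}_h^m \in \vc{W}_h^{0,\perp}$, this bounds $\|\Curl \vc{\zeta}_h^m\|_{\vc{L}^2(\Om)} \leq C\|\vc{w}_h^m\|_{\vc{L}^2(\Om)}$. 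Since $\Curl \vc{\zeta}_h^m = \Curl \vc{\zeta}_h^m$ and $\vc{\zeta}_h^m \in \vc{W}_h^{0,\perp} \cap \Hcurl$, Lemma~\ref{lemma:embedding}(2) gives $\|\Curl \vc{\zeta}_h^m\|_{\vc{L}^{2^*}} $ controlled — more precisely I want $\vc{u}_h^m$ itself in $\vc{L}^{2^*}$, so I would instead apply Lemma~\ref{lemma:embedding}(1) directly to $\vc{z}_h^m \in \vc{V}_h^{0,\perp} \cap \Hdiv$, getting $\|\vc{z}_h^m\|_{\vc{L}^{2^*}(\Om)} \leq C\|\Div \vc{z}_h^m\|_{L^2(\Om)} = C\|\Div \vc{u}_h^m\|_{L^2(\Om)}$, and handle the curl part $\Curl \vc{\zeta}_h^m$ through $\vc{w}_h$, writing $\|\Curl \vc{\zeta}_h^m\|_{\vc{L}^{2^*}} \leq \|\Curl \vc{\zeta}_h^m - \vc{w}_h^m\|_{\vc{L}^{2^*}} + \|\vc{w}_h^m\|_{\vc{L}^{2^*}}$ and controlling the first difference by the commuting diagram (it equals $\Curl(\Pi_h^W\vc{\eta} - \vc{\zeta}_h)$-type error when $\vc{\eta}$ solves the continuous div-curl system with data $\Curl \vc{w}_h^m$, as in the proof of Lemma~\ref{lemma:embedding}). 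The cleanest route is: since $\Curl \vc{\zeta}_h^m = \Pi_h^V \Curl \vc{\eta}$ with $\vc{\eta}$ the solution of the continuous elliptic system with $\Curl \vc{\eta} = \Curl \vc{w}_h^m$, $\Div \vc{\eta}=0$, one has $\vc{\eta} \in \vc{W}^{1,2}$ with $\|\Grad \vc{\eta}\|_{\vc{L}^2} \leq C\|\Curl \vc{w}_h^m\|_{\vc{L}^2}$, hence $\|\Curl \vc{\zeta}_h^m\|_{\vc{L}^{2^*}} \leq C\|\vc{\eta}\|_{\vc{L}^{2^*}} + Ch\|\Grad^2 \vc{\eta}\|$-correction $\leq C\|\Curl \vc{w}_h^m\|_{\vc{L}^2}$ by the Sobolev embedding and Lemma~\ref{lemma:interpolation}. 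Summing $\Delta t \|\vc{u}_h^m\|_{\vc{L}^{2^*}}^2$ over $m$ then gives a bound by $\sum_m \Delta t(\|\Div \vc{u}_h^m\|_{L^2}^2 + \|\Curl \vc{w}_h^m\|_{\vc{L}^2}^2)$, again uniformly bounded by Lemma~\ref{lemma:stability}.

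The main obstacle I anticipate is not any single estimate but assembling the velocity bound cleanly: the space $\vc{V}_h$ is only $\Div$-conforming, so $\vc{u}_h$ is genuinely not in $\vc{W}^{1,2}$, and one cannot embed it directly. The decomposition must be used, and the curl component $\Curl \vc{\zeta}_h^m$ has to be routed through either $\vc{w}_h$ (using the mixed equation $\vc{w}_h = \Curl \vc{u}_h$ in the discrete sense) or through a continuous auxiliary field plus interpolation error estimates, taking care that the $h$-dependent correction terms from Lemma~\ref{lemma:interpolation} really are uniformly bounded (they are, since they carry a factor $h^s$ against $\Grad^s$ of an $\vc{W}^{1,2}$-bounded field, and in the worst case one only loses a constant). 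A secondary technical point is verifying $\vc{w}_h^m \in \vc{W}_h^{0,\perp}$, i.e., that the gradient part of $\vc{w}_h^m$ in \eqref{eq:Wh-decomp} vanishes; this follows from testing the constraint equation in \eqref{FEM:momentumeq} with $\vc{\eta}_h \in \Grad S_h$ and $\Curl \Grad = 0$, so it is routine. Everything else reduces to Lemma~\ref{lemma:embedding}, the discrete Poincaré inequalities \eqref{eq:Poincare1}--\eqref{eq:Poincare2}, and the uniform energy bound of Lemma~\ref{lemma:stability}.
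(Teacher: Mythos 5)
Your treatment of the vorticity (testing the constraint equation with $\vc{\eta}_h=\Grad s_h$ to get $\vc{w}_h^m\in\vc{W}_h^{0,\perp}$, then invoking Lemma \ref{lemma:embedding}(2)) and of the component $\vc{z}_h^m\in\vc{V}_h^{0,\perp}$ (Lemma \ref{lemma:embedding}(1)) coincides with the paper and is correct. The gap is in your handling of the solenoidal part $\Curl\vc{\zeta}_h^m$ of the velocity, which is exactly the hard step. First, the ``cleanest route'' identity you propose is false: if $\vc{\eta}$ solves $\Curl\vc{\eta}=\Curl\vc{w}_h^m$, $\Div\vc{\eta}=0$, then $\Pi_h^V\Curl\vc{\eta}=\Pi_h^V\Curl\vc{w}_h^m=\Curl\vc{w}_h^m$ (the latter already lies in $\vc{V}_h$), so your claim would say $\Curl\vc{\zeta}_h^m=\Curl\vc{w}_h^m$, i.e.\ that the divergence-free part of the velocity equals the curl of the vorticity --- an object two derivatives higher. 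What is true (and what the proof of Lemma \ref{lemma:embedding}(2) with that choice of $\vc{\eta}$ actually yields) is a bound on $\vc{\zeta}_h^m$ itself in $\vc{L}^{2^*}$, or on $\vc{w}_h^m$ in $\vc{L}^{2^*}$, not on $\Curl\vc{\zeta}_h^m$. Second, your fallback triangle inequality $\norm{\Curl\vc{\zeta}_h^m}_{\vc{L}^{2^*}}\le\norm{\Curl\vc{\zeta}_h^m-\vc{w}_h^m}_{\vc{L}^{2^*}}+\norm{\vc{w}_h^m}_{\vc{L}^{2^*}}$ is a category error: $\vc{w}_h^m$ is the discrete curl of $\vc{u}_h^m=\Curl\vc{\zeta}_h^m+\vc{z}_h^m$ (they satisfy only $\int_\Om\vc{w}_h^m\vc{\eta}_h\,dx=\int_\Om\Curl\vc{\zeta}_h^m\,\Curl\vc{\eta}_h\,dx$ for $\vc{\eta}_h\in\vc{W}_h$), so $\vc{w}_h^m$ and $\Curl\vc{\zeta}_h^m$ sit at different derivative levels and their difference is not an interpolation error. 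Your Cauchy--Schwarz step with $\vc{\eta}_h=\vc{\zeta}_h^m$ only yields $\norm{\Curl\vc{\zeta}_h^m}_{\vc{L}^2}\le C\norm{\vc{w}_h^m}_{\vc{L}^2}$, an $L^2$ bound, not the needed $L^{2^*}$ bound.

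The paper closes this step by duality: for $\vc{\phi}\in\vc{L}^{(2^*)'}(\Om)$ one writes $\vc{\phi}=\Curl\vc{\eta}+\Grad\lambda$, notes $\int_\Om\vc{\phi}\,\Curl\vc{\zeta}_h\,dx=\int_\Om\Curl\vc{\eta}\,\Curl\vc{\zeta}_h\,dx$, introduces the discrete Galerkin approximation $\vc{\eta}_h\in\vc{W}_h^{0,\perp}$ of $\vc{\eta}$ in the curl--curl form, and uses the constraint equation of \eqref{FEM:momentumeq} to convert $\int_\Om\Curl\vc{\eta}_h\,\Curl\vc{\zeta}_h\,dx$ into $\int_\Om\vc{w}_h\vc{\eta}_h\,dx$, which is then estimated by $\norm{\vc{w}_h}_{\vc{L}^{2^*}}\norm{\vc{\eta}_h}_{\vc{L}^{(2^*)'}}$ together with the discrete Poincar\'e inequality \eqref{eq:Poincare2}. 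This transfers the already-established $L^2(0,T;\vc{L}^{2^*})$ bound on $\vc{w}_h$ to $\Curl\vc{\zeta}_h$. Some argument of this kind --- exploiting that $\vc{\zeta}_h$ solves a discrete curl--curl problem with right-hand side $\vc{w}_h$ --- is needed; as written, your proof does not supply it.
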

\begin{proof}
The second equation in  \eqref{FEM:momentumeq} with test function $\vc{\eta}_h = \Grad s_h$ reads:
\begin{equation*}
	 \int_\Om \vc{w}^m_h \Grad s_h~dx = 0, \quad \forall s_h \in S_h(\Om), \quad m=1, \ldots, M,
\end{equation*}
where the space $S_h(\Om)$ is defined in Section 2.3.
By definition, this means that $\vc{w}_h^m \in \vc{W}_h^{0,\perp}(\Om)$ and hence 
Lemmma \ref{lemma:embedding} is applicable and yields the desired estimate: 
\begin{equation}\label{eq:whbound}
	\vc{w}_{h} \inb L^2(0,T;\vc{L}^{2^*}(\Omega)).
\end{equation}

Next, we make use of Lemma \ref{lemma:hodge} and let $\{\vc{\zeta}_h\}_{h>0}$, $\{\vc{z}_h\}_{h>0}$
satisfy
\begin{equation*}
	\begin{split}
		& \vc{u}_{h}(\cdot,t) = \Curl \vc{\zeta}_{h}(\cdot, t) + \vc{z}_{h}(\cdot,t), \\
		& \vc{\zeta}_{h}(\cdot,t) \in \vc{W}_{h}^{0,\perp}(\Om), 
		\quad 
		\vc{z}_{h}(\cdot, t) \in \vc{V}_{h}^{0,\perp}(\Om),		
	\end{split}
\end{equation*}
for all $t \in (0,T)$.	

Another application of Lemma \ref{lemma:embedding}  yields
\begin{equation}\label{vel1}
	\vc{z}_{h} \inb L^2(0,T;\vc{L}^{2^*}(\Omega)).	
\end{equation}

Fix $\vc{\eta} \in \vc{W}_{0}^{\Curl,(2^*)'}(\Omega)$  and let $\vc{\eta}_{h} \in \vc{W}^{0,\perp}_{h}(\Omega)$ 
satisfy
\begin{equation*}
\int_{\Omega}\Curl \vc{\eta}_{h}\Curl \vc{\phi}_{h}\ dxdt =  \int_{\Omega}\Curl \vc{\eta} \Curl \vc{\phi}_{h} \ dxdt, \quad \forall \vc{\phi}_{h} \in \vc{W}_{h}(\Omega).
\end{equation*}
Then, by utilizing the second equation in \eqref{FEM:momentumeq} with $\vc{\eta}_{h}$ as test function
(the second equality below), we calculate
\begin{equation}\label{eq:pirk}
\begin{split}
\sum_{m=1}^M \Delta t \left|\int_{\Omega}\Curl \vc{\eta} \Curl \vc{\zeta}^m_{h} \ dx \right|^2 
& = \sum_{m=1}^M \Delta t\left|\int_{\Omega}\Curl \vc{\eta}_{h}\Curl \vc{\zeta}^m_{h}\ dx\right|^2 \\
 = \sum_{m=1}^M \Delta t  \left| \int_{\Omega}\vc{w}^m_{h}\vc{\eta}^m_{h} dxdt\right|^2 
& \leq \|\vc{w}_{h}\|^2_{L^2(0,T;\vc{L}^{2^*}(\Omega))}\|\vc{\eta}_{h}\|^2_{\vc{L}^{(2^*)'}(\Omega))} \\
& \leq \|\vc{w}_{h}\|^2_{L^2(0,T;\vc{L}^{2^*}(\Omega))}\|\Curl \vc{\eta}\|^2_{\vc{L}^{(2^*)'}(\Omega)},
\end{split}
\end{equation}
where the last inequality follows from the discrete Poincar\'e inequality \eqref{eq:Poincare2}. 

Now, for an arbitrary $\vc{\phi} \in L^{(2^*)'}$ let $\Curl \vc{\eta}$ be given through
the Hodge decomposition $\vc{\phi} = \Curl \vc{\eta} + \Grad \lambda$.
Then, we can use \eqref{eq:pirk} to deduce
\begin{equation*}
	\begin{split}
		&\int_{0}^T \left(\sup_{\vc{\vc{\phi}} \in L^{(2^*)'}(\Omega)}~
		\frac{\left|\int_{\Omega}\vc{\phi}\Curl \vc{\zeta}_{h} dx\right|}{\|\vc{\phi}\|_{\vc{L}^{(2^*)'}(\Omega)}}\right)^2~dt  \\
		&\qquad  = \int_{0}^T \left(\sup_{\vc{\vc{\phi}} \in L^{(2^*)'}(\Omega)}
		\frac{\left|\int_{\Omega}\Curl \vc{\eta}\Curl \vc{\zeta}_{h} dx\right|}{\|\vc{\phi}\|_{\vc{L}^{(2^*)'}(\Omega)}}\right)^2 ~dt \\
	    &\qquad  = \int_{0}^T \left(\sup_{\vc{\vc{\phi}} \in L^{(2^*)'}(\Omega)}
		\frac{\left|\int_{\Omega} \vc{\eta}_h \vc{w}_{h} dx\right|}{\|\vc{\phi}\|_{\vc{L}^{(2^*)'}(\Omega)}}\right)^2 ~dt 
				\leq \|\vc{w}_{h}\|^2_{L^2(0,T;\vc{L}^{2^*}(\Omega))},
	\end{split}
\end{equation*}
where the last term is bounded from \eqref{eq:whbound}.
Hence, $\Curl \vc{\zeta}_h \in_b L^2(0,T;\vc{L}^{2^*}(\Om))$ and, 
keeping in mind \eqref{vel1},  $\vu_h \in_b L^2(0,T;\vc{L}^{2^*}(\Om))$.
\end{proof}

In the upcoming convergence analysis and in order to establish weak time-continuity 
of the density we shall need to control the artificial diffusion introduced 
by the upwind discretization of the continuity equation. 
The following lemma provides the required bound.

\begin{lemma}
\label{lemma:productionbound}
\solutiontext Then
there exists a constant $C>0$ depending only on the initial energy $\mathcal{E}(\vrho^0, \vc{u}^{0})$,
the shape--regularity of $E_{h}$, $T$, and $|\Om|$, such that
\begin{equation*}
\begin{split}
&\sum_{E \in E_{h}}\int_{0}^T \int_{\partial E}\jump{\varrho_{h}}(\vc{u}_{h} \cdot \nu)^-(\Pi_{h}^Q\phi - \phi)\ dS(x)dt \\
&\leq h^{\theta(\gamma)}C\|\Grad \phi\|_{L^2(0,T;L^{2^*}(\Omega))}, \quad \forall \phi \in L^2(0,T;W^{1,2^*}(\Omega)),
\end{split}
\end{equation*}
where $2^* = 6$, if $N=3$, and $2^*$ is a sufficiently large number, if $N=2$. 
Here, $\theta(\gamma)>0$ is given by \eqref{eq:thetaeq} below.
\end{lemma}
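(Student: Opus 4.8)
The plan is to dominate this commutator-type production term by the artificial numerical diffusion already controlled in Lemma~\ref{lemma:stability}. Two structural facts make this work. On any interior face $\Gamma$ the density $\vrho_h$ is piecewise constant, and, because $\vc{V}(E)=\mathbb{P}_0^N\oplus\mathbb{P}_0^1\vc{x}$ and $\Gamma$ lies in a hyperplane, the normal trace $\vc{u}_h\cdot\nu$ is constant on $\Gamma$ as well; thus $\jump{\vrho_h}$, $(\vc{u}_h\cdot\nu)^-$ and $P''(\vrho_\dagger)$ can all be pulled out of the face integrals (and boundary faces contribute nothing, since $\vc{u}_h\cdot\nu=0$ on $\pOm$). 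With this I would apply the Cauchy--Schwarz inequality over the interior faces and the time levels $k=1,\dots,M$, inserting the weight $P''(\vrho^k_\dagger)$ (positive since $\vrho_h>0$ by Lemma~\ref{lemma:vrho-props}):
\begin{equation*}
	\Bigl|\sum_{E\in E_h}\int_0^T\!\!\int_{\partial E}\jump{\vrho_h}(\vc{u}_h\cdot\nu)^-(\Pi_h^Q\phi-\phi)\,dS(x)\,dt\Bigr|
	\le \mathcal{D}_h^{1/2}\,\mathcal{R}_h^{1/2},
\end{equation*}
where, using $|(\vc{u}^k_h\cdot\nu)^-|\le|\vc{u}^k_h\cdot\nu|$,
\begin{equation*}
	\mathcal{D}_h=\sum_{k=1}^M\Dt\sum_{\Gamma\in\Gamma^I_h}\int_\Gamma P''(\vrho^k_\dagger)\jump{\vrho^k_h}^2_\Gamma\,|\vc{u}^k_h\cdot\nu|\,dS(x),\qquad
	\mathcal{R}_h=\sum_{k=1}^M\Dt\sum_{\Gamma\in\Gamma^I_h}\int_\Gamma \frac{|\vc{u}^k_h\cdot\nu|}{P''(\vrho^k_\dagger)}\,(\Pi_h^Q\phi-\phi)^2\,dS(x).
\end{equation*}
Since $\mathcal{D}_h$ is exactly the last contribution to $\mathcal{N}^M_{\textnormal{diffusion}}$, Lemma~\ref{lemma:stability} gives $\mathcal{D}_h\le C\int_\Om\mathcal{E}(\vrho^0,\vc{u}^0)\,dx$, and everything reduces to estimating $\mathcal{R}_h$.

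For $\mathcal{R}_h$ I would write $P''(\vrho^k_\dagger)^{-1}=(a\gamma)^{-1}(\vrho^k_\dagger)^{2-\gamma}$ and bound $(\vrho^k_\dagger)^{2-\gamma}$ in terms of the two face traces $\vrho^k_\pm$ by an elementary inequality (when $\gamma\ge 2$ one invokes instead $\vrho_h\inb L^\infty(0,T;L^2(\Om))$ and splits slightly differently). As everything but $\phi$ is constant on $\Gamma$, $\int_\Gamma(\Pi_h^Q\phi-\phi)^2\,dS=\norm{\Pi_h^Q\phi-\phi}_{L^2(\Gamma)}^2$; combining Lemma~\ref{lemma:edgebounds} with Lemma~\ref{lemma:interpolation}, followed by a H\"older step on $E$ to trade $L^2$ gradients for $L^{2^*}$ gradients, gives $\norm{\Pi_h^Q\phi-\phi}_{L^2(\Gamma)}\le Ch^{\frac12+\frac{N}{2}-\frac{N}{2^*}}\norm{\Grad\phi}_{\vc{L}^{2^*}(E)}$, while the inverse inequality (Lemma~\ref{lemma:inverse}) bounds the value of $|\vc{u}^k_h\cdot\nu|$ on $\Gamma$ by $Ch^{-N/2}\norm{\vc{u}^k_h}_{\vc{L}^2(E)}$. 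Rewriting the pointwise values $\vrho^k_h|_E$ through $\norm{\vrho^k_h}_{L^\gamma(E)}$ (which costs a negative power of $h$, since $|E|\sim h^N$), summing over faces, and applying a discrete H\"older inequality over $E\in E_h$ pairing $\vrho_h\inb L^\infty(0,T;L^\gamma(\Om))$ and $\vc{u}_h\inb L^\infty(0,T;\vc{L}^2(\Om))$ (both from Lemma~\ref{lemma:stability}) against $\Grad\phi\in L^2(0,T;\vc{L}^{2^*}(\Om))$, all the powers of $h$ collapse into a single exponent and one obtains $\mathcal{R}_h\le Ch^{2\theta(\gamma)}\norm{\Grad\phi}_{L^2(0,T;\vc{L}^{2^*}(\Om))}^2$. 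Together with the bound on $\mathcal{D}_h$, this is the assertion.

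The real work, and the place where the hypothesis on $\gamma$ enters, is the bookkeeping of the powers of $h$. The positive power $h^{1+N-\frac{2N}{2^*}}$ gained from $\norm{\Pi_h^Q\phi-\phi}_{L^2(\Gamma)}^2$ has to outweigh the negative powers contributed by $|\vc{u}^k_h\cdot\nu|$ on $\Gamma$, by the passage from $\vrho^k_h|_E$ to $\norm{\vrho^k_h}_{L^\gamma(E)}$, by the weight $(\vrho^k_\dagger)^{2-\gamma}$, and by the mismatch between the H\"older exponent over the mesh and $\gamma$; their sum is the exponent $\theta(\gamma)$ of the statement, and a direct computation shows it is strictly positive for every $\gamma>\tfrac{N}{2}$. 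The choice $2^*=6$ in three dimensions, i.e.\ the Sobolev exponent $2N/(N-2)$, is exactly what keeps this margin nonnegative.
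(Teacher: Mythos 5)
Your splitting is the same Cauchy--Schwarz-with-a-weight device the paper uses, and for $\tfrac{N}{2}<\gamma\le 2$ your argument is essentially the paper's proof: there the paper's auxiliary exponent is $\alpha=\gamma$, the weight is (a constant multiple of) $P''$, the first factor is exactly the numerical diffusion controlled by Lemma \ref{lemma:stability}, and the second factor is handled by the three-way H\"older/trace/inverse-estimate bookkeeping you describe. The structural observations (piecewise constancy of $\vrho_h$ and of $\vu_h\cdot\nu$ on faces, vanishing boundary contribution) are correct but not essential.

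The genuine gap is the case $\gamma>2$. With the weight $P''(\vrho_\dagger)\sim\vrho_\dagger^{\gamma-2}$ your second factor contains $\bigl(P''(\vrho_\dagger)\bigr)^{-1}\sim\vrho_\dagger^{2-\gamma}$, a \emph{negative} power of the density. Since $\vrho_\dagger$ lies between the two face traces and no $h$-uniform positive lower bound on $\vrho_h$ is available (Lemma \ref{lemma:vrho-props} degenerates as $h\to0$), this factor is not controlled by any $L^p$ bound on $\vrho_h$; your parenthetical appeal to $\vrho_h\inb L^\infty(0,T;L^2(\Om))$ and ``splitting slightly differently'' cannot repair a pointwise blow-up where the density is small. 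The paper's way out is to abandon $P''$ as the weight when $\gamma>2$ and instead insert $B''$ for the renormalization $B(z)=z^\alpha$ with $\alpha=\gamma/(i+1)\in[1,2)$, so that $(B'')^{-1}\sim\vrho_\dagger^{2-\alpha}$ with $0<2-\alpha\le1$ is dominated by $1+\vrho_++\vrho_-$. The price is that the first Cauchy--Schwarz factor is then no longer the numerical diffusion of Lemma \ref{lemma:stability}; it is bounded separately by taking $\phi_h\equiv1$ in the renormalized continuity scheme \eqref{FEM:renormalized} and estimating $\sum_m\Dt\int_\Om(\vrho_h^m)^\alpha\Div\vu_h^m\,dx$ via $2\alpha\le\gamma$ and the energy estimate. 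This extra ingredient --- the renormalized scheme with a sub-quadratic power of the density --- is missing from your proposal, so as written the proof does not cover all $\gamma>\tfrac{N}{2}$.
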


\begin{proof}
Let $\phi \in L^2(0,T;W^{1,2^*}(\Omega))$ be arbitrary and set 
$$
	\phi^m = \frac{1}{\Delta t}\int_{t^{m-1}}^{t^m} \phi (s,x)~ds, \quad \phi_h^m = \Pi_h^Q \phi^m, \quad m=1, \ldots, M.
$$
We will need the auxilary function $B(z)= z^\alpha$.
where  
\begin{equation*}
\alpha = \frac{\gamma}{i+1} \ \textrm{and } i \in \mathbb{N} \textrm{ is chosen such that }\gamma \in (i+1,i+2].
\end{equation*}
Using $B''(z)>0$ and the H\"older inequality, we obtain
\begin{equation*}
	\begin{split}
		I^2 &:= \left|\sum_{m=1}^M\sum_{E \in E_{h}} \Delta t
					 \int_{\binner}\jump{\varrho^m_{h}}_{\partial E}(\vc{u}^m_{h} \cdot \nu)^-(\phi_h^m - \phi^m) dS(x)\right|^2 \\
		    &\leq \left(\sum_{m=1}^M\sum_{E \in E_{h}} \Delta t
					\int_{\binner }B''(\varrho^m_\dagger)\jump{\varrho^m_{h}}^2|\vc{u}^m_{h} \cdot \nu|dS(x)\right) \\
			&\qquad \times \left(\sum_{m=1}^M\sum_{E \in E_{h}} \Delta t\
					\int_{\binner}\left(B''(\varrho^m_\dagger)\right)^{-1}|\vc{u}^m_{h} \cdot \nu||\Pi_{h}^Q \phi^m - \phi^m|^2dS(x)\right) \\
			&=: I_1 \times I_2.
	\end{split}
\end{equation*}
In the case $\frac{N}{2} < \gamma \leq 2$, $\alpha = \gamma$ and  Lemma \ref{lemma:stability} yields 
\begin{equation*}
\label{eq:errorbound2}
I_1 \leq C\int_\Om B(\vr_0)~dx = C\int_\Om (\vr^0)^\gamma~dx.
\end{equation*}
Conversely, if $\gamma > 2$ then $2\alpha \leq \gamma$ 
and the renormalized scheme \eqref{FEM:renormalized} with $\phi_{h}:=1$ yields
\begin{equation*}
	\begin{split}
		I_1&\leq (\alpha - 1)\left| \sum_{m=1}^M \Delta t \int_{\Omega}
					(\varrho^m_{h})^\alpha \Div \vc{u}^m_{h} dx \right|Ê
				+ \int_{\Omega}(\vrho^0)^\alpha dx \\
		&\leq C\left(\|\varrho_{h}\|^\alpha_{L^\infty(0,T;L^\gamma(\Omega))}
					 \|\Div \vc{u}_{h}\|_{L^2(0,T;L^2(\Omega))} 
				+ \int_{\Omega}(\vrho^0)^\gamma dx\right),		
	\end{split}
\end{equation*}
which is bounded by Lemma \ref{lemma:stability}. Consequently, in both cases, we 
conclude that
\begin{equation}\label{inni}
	I_1 \leq C.
\end{equation}

To bound the $I_2$ term, we  utilize the  H\"older inequality:
\begin{equation}\label{eq:forkenneth}
	\begin{split}
		&\int_0^T\int_\Om |fgh^2|~dxdt \\
		&\qquad \leq \int_0^T\left(\int_\Om |fg|^{\frac{2^*}{2^* -2}}~dx \right)^\frac{2^* -2}{2^*}\left(\int_\Om |h|^{2^*}~dx \right)^\frac{2}{2^*}dt \\
		&\qquad \leq \int_0^T\left( \int_\Om |f|^{m_1}~dx\right)^\frac{1}{m_1}\|g\|_{L^{2^*}(\Om)}\|h\|_{L^{2^*}(\Om)}^2~dt\\
		&\qquad \leq \|f\|_{L^\infty(0,T;L^{m_1}(\Om))}\|g\|_{L^\infty(0,T;L^{2^*}(\Om))}\|h\|_{L^2(0,T;L^{2^*}(\Om))}^2,
	\end{split}
\end{equation}
where $1< m_1 =  \frac{2^*}{2^* -3}\leq 2$ and $\frac{1}{m_1} + \frac{1}{2^*} + \frac{2}{2^*} = 1$.

Now, using  \eqref{eq:forkenneth}, we deduce
\begin{equation}\label{eq:errorbound3}
	\begin{split}
I_2&\leq \alpha(\alpha-1)
			\max_{m=1, \ldots, M}\left(\sum_{E \in E_{h}} \int_{\binner}|\vc{u}^m_{h} \cdot \nu|^{2}~dS(x)\right)^\frac{1}{2} \\
	&\qquad \qquad  \times 
	\sum_{m=1}^M\Delta t\left(\sum_{E \in E_{h}}  \int_{\binner}\left|\Pi_{h}^Q \phi^m - \phi^m\right|^{2^*}~dS(x)\right)^\frac{2}{2^*} \\ 		 
	&\qquad \qquad \times 
	\max_{m=1, \ldots, M}\left(\sum_{E \in E_{h}}
		\int_{\binner}\left|\left(B''(\varrho^m_\dagger)\right)^{-1}\right|^{m_1}~dS(x)\right)^{\frac{1}{m_1}}.
\end{split}
\end{equation}
Next, we apply Lemma \ref{lemma:edgebounds} to deduce
\begin{equation}\label{eq:prod-i21}
	\begin{split}
		\max_{m=1, \ldots, M}\left(\sum_{E \in E_{h}} \int_{\binner}|\vc{u}^m_{h} \cdot \nu|^{2^*}~dS(x)\right)^\frac{1}{2^*}
		\leq
		Ch^{-\frac{1}{2^*}}\|\vu_h\|_{L^\infty(0,T;\vc{L}^{2^*}(\Om))}.
	\end{split}
\end{equation}
Similarly, we find that
\begin{equation*}
	\begin{split}
		&\sum_{m=1}^M\Delta t\left(\sum_{E \in E_{h}}  \int_{\binner}\left|\Pi_{h}^Q \phi^m - \phi^m\right|^{2^*}~dS(x)\right)^\frac{2}{2^*} \\
		&\qquad \leq Ch^{-\frac{2}{2^*}}\|\Pi_h^Q\phi- \phi\|_{L^2(0,T;L^{2^*}(\Om))}^2 
		\leq Ch^{2 - \frac{2}{2^*}}\|\Grad\phi\|_{L^2(0,T;L^{2^*}(\Om))}^2,
	\end{split}
\end{equation*}
where the last inequality is an application of Lemma \ref{lemma:interpolation}.

To derive a similar bound for the $B''$ term in \eqref{eq:errorbound3}, 
we first note that,
since $\vr_h^m$ is everywhere positive and $2-\alpha <1$, 
\begin{equation*}
\left|\left(B''(\varrho^m_\dagger)\right)^{-1}\right|^{m_1} \leq \left|\vr_+^m + \vr_-^m \right|^{(2-\alpha)m_1}
\leq C(1+ \abs{\vr_+^m}^{m_1} + \abs{\vr_-^m}^{m_1}),
\end{equation*}
on every $\Gamma \cap \binner$.
From this, we conclude that
\begin{equation*}
	\begin{split}
		\int_{\partial E}\left|\left(B''(\varrho^m_\dagger)\right)^{-1}\right|^{m_1}~dS(x)
		&\leq h^{-1}C\left(|E| + \int_{\mathcal{N}(E)\cup E}|\vr_h|^{m_1}~dx \right),
	\end{split}
\end{equation*}
where $\mathcal{N}(E)$ denotes the union of the neighboring elements of $E$.
Applying this together with Lemma \ref{lemma:edgebounds}, we obtain
\begin{equation}\label{eq:prod-i23}
	\begin{split}
		&\max_{m=1, \ldots, M}\left(\sum_{E \in E_{h}}
			\int_{\binner}\left|\left(B''(\varrho^m_\dagger)\right)^{-1}\right|^{m_1}~dS(x)\right)^{m_1}	 \\
		&\qquad \leq Ch^{-\frac{1}{m_1}}\left(|\Om|^\frac{1}{m_1} +\|\vr_h\|_{L^\infty(0,T;L^{m_1}(\Om))}\right) \\
		&\qquad \leq Ch^{-\frac{1}{m_1}}\left( 1 + h^{\min\{0,N(\frac{1}{m_1} - \frac{1}{\gamma})\}}\|\vr_h\|_{L^\infty(0,T;L^\gamma(\Om))}\right),
	\end{split}
\end{equation}
where the last inequality is a standard inverse estimate (Lemma \ref{lemma:inverse}).
Setting \eqref{eq:prod-i21}--\eqref{eq:prod-i23} into \eqref{eq:errorbound3} leads to the bound
\begin{equation}\label{eq:errorbound4}
	\begin{split}
	 	I_2 &\leq Ch\|\vu_h\|_{L^\infty(0,T;\vc{L}^{2^*}(\Om))}\|\Grad\phi\|^2_{L^2(0,T;L^{2^*}(\Om))}\\
			&\qquad \qquad \times		\left( 1 + h^{\min\{0,N(\frac{1}{m_1} - \frac{1}{\gamma}\}}\|\vr_h\|_{L^\infty(0,T;L^\gamma(\Om))}\right) \\
			&\leq Ch^\frac{1}{2}\|\vu_h\|_{L^2(0,T;\vc{L}^{2^*}(\Om))}		\|\Grad\phi\|^2_{L^2(0,T;L^{2^*}(\Om))} \\
			&\qquad \qquad \times		\left( 1 + h^{\min\{0,N(\frac{1}{m_1} - \frac{1}{\gamma}\}}\|\vr_h\|_{L^\infty(0,T;L^\gamma(\Om))}\right),
	\end{split}
\end{equation}
where the last inequality is an application Lemma \ref{lemma:inverse} in time (keeping in mind $\Delta t = \kappa h$).
We have also used that
$$
h^{-(\frac{1}{m_1} + \frac{1}{2^*} + \frac{2}{2^*})} = h^{-1}.
$$  

In 2D, $2^*$ is any large finite number. Consequently, we can always make sure that $m_1 \leq \gamma$.
Using this, it is straight forward to check that
\begin{equation}\label{eq:thereq}
	h^\frac{1}{2}h^{\min\{0,N(\frac{1}{m_1} - \frac{1}{\gamma}\}} = h^{2\theta(\gamma)},
\end{equation}
where 
\begin{equation}\label{eq:thetaeq}
	\begin{split}
		0 < \theta(\gamma) := \begin{cases}
						\frac{1}{4}, & N=2, \\
						\frac{1}{2} + \min\{0,3(\frac{1}{2} - \frac{1}{\gamma})\},& N=3,
					\end{cases}	
	\end{split}
\end{equation}
By setting \eqref{eq:thereq} into \eqref{eq:errorbound4} and applying Lemma \ref{lemma:stability}, 
we obtain
\begin{equation*}
	I_2 \leq h^{2\theta(\gamma)}C\|\Grad\phi\|^2_{L^2(0,T;L^{2^*}(\Om))}.
\end{equation*}
This and \eqref{inni} gives
\begin{equation*}
	I^2 = I_1 \times I_2 \leq C\|\Grad\phi\|_{L^2(0,T;L^{2^*}(\Om))}h^{2\theta(\gamma)},
\end{equation*}
which brings the proof to an end.
\end{proof}

\subsection{Weak time-continuity estimates}
We end this section by establishing weak time-continuity of 
the density and velocity.
\begin{lemma}\label{lemma:timecont}
\solutiontext
Then
$$
\Pth{\vr_h}\inb L^2(0,T;W^{-1,(2^*)'}(\Omega)),
$$
where $(2^*)' = \frac{2^*}{2^*-1}$ and $2^*$ is 
as in the previous lemma. 
\end{lemma}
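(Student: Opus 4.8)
The plan is as follows. Because $t\mapsto \Pth{\vr_h}(t,\cdot)$ is constant on each interval $(t_{m-1},t_m]$, the claim is equivalent to the bound
\begin{equation*}
	\sum_{m=1}^M \Dt\,\norm{\Pth{\vr_h^m}}_{W^{-1,(2^*)'}(\Om)}^2 \le C,
\end{equation*}
and since $W^{-1,(2^*)'}(\Om)$ is the dual of $W^{1,2^*}_0(\Om)$, it suffices to control $\int_\Om \Pth{\vr_h^m}\phi\,\dx$ uniformly over $\phi\in W^{1,2^*}_0(\Om)$ with $\norm{\Grad\phi}_{\vc{L}^{2^*}(\Om)}\le 1$. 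First I would project the test function: since $\Pth{\vr_h^m}\in Q_h(\Om)$ and $\Pi_h^Q$ preserves the integral over every $E\in E_h$, we get $\int_\Om\Pth{\vr_h^m}(\phi-\Pi_h^Q\phi)\,\dx=0$, hence $\int_\Om\Pth{\vr_h^m}\phi\,\dx=\int_\Om\Pth{\vr_h^m}\Pi_h^Q\phi\,\dx$. Next I would insert $\phi_h=\Pi_h^Q\phi$ into the continuity scheme written in the form \eqref{FEM:contequation-newform}; since $2^*>N$ the function $\phi$ is continuous, so by Remark \ref{rem:E-VS-Gamma} the ``exact'' trace part of the upwind flux integrates to zero ($-\sum_\Gamma\int_\Gamma(\cdots)\jump{\phi}_\Gamma=0$), leaving
\begin{equation*}
	\int_\Om \Pth{\vr_h^m}\phi\,\dx = -\sum_{E\in E_h}\int_{\binner}\bigl(\vr_+^m(\vc{u}_h^m\cdot\nu)^+ + \vr_-^m(\vc{u}_h^m\cdot\nu)^-\bigr)\bigl(\Pi_h^Q\phi-\phi\bigr)\,dS(x).
\end{equation*}

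I would then estimate the right-hand side by the face-by-face H\"older/trace/inverse machinery used in Lemma \ref{lemma:productionbound}. Bounding the upwind flux crudely by $(\vr_+^m+\vr_-^m)\abs{\vc{u}_h^m\cdot\nu}$ and applying H\"older on each $\binner$ with exponents $(m_1,2^*,2^*)$, where $\tfrac1{m_1}+\tfrac2{2^*}=1$, followed by the discrete H\"older inequality in $E$, gives
\begin{equation*}
	\abs{\int_\Om\Pth{\vr_h^m}\phi\,\dx}\le C\Bigl(\sum_E\norm{\vr_+^m+\vr_-^m}^{m_1}_{L^{m_1}(\binner)}\Bigr)^{1/m_1}\Bigl(\sum_E\norm{\vc{u}_h^m\cdot\nu}^{2^*}_{L^{2^*}(\binner)}\Bigr)^{1/2^*}\Bigl(\sum_E\norm{\Pi_h^Q\phi-\phi}^{2^*}_{L^{2^*}(\binner)}\Bigr)^{1/2^*}.
\end{equation*}
For the first factor, the trace-scaling estimate for piecewise constants bounds it by $Ch^{-1/m_1}\norm{\vr_h^m}_{L^{m_1}(\Om)}\le Ch^{-1/m_1}\norm{\vr_h^m}_{L^\gamma(\Om)}$, using $m_1\le\gamma$; the second factor is $\le Ch^{-1/2^*}\norm{\vc{u}_h^m}_{\vc{L}^{2^*}(\Om)}$ by trace scaling and the inverse estimate (Lemma \ref{lemma:inverse}), exactly as in \eqref{eq:prod-i21}; the third factor is $\le Ch^{1-1/2^*}\norm{\Grad\phi}_{\vc{L}^{2^*}(\Om)}$ by (the $L^{2^*}$-analogue of) Lemma \ref{lemma:edgebounds} and Lemma \ref{lemma:interpolation}, recalling $\Grad\Pi_h^Q\phi\equiv 0$ on each $E$. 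The key observation is that the powers of $h$ multiply to $h^{1-1/m_1-2/2^*}=h^0$, so nothing is lost; combining with the energy bound $\norm{\vr_h^m}_{L^\gamma(\Om)}\le C$ from Lemma \ref{lemma:stability} yields
\begin{equation*}
	\norm{\Pth{\vr_h^m}}_{W^{-1,(2^*)'}(\Om)}\le C\,\norm{\vc{u}_h^m}_{\vc{L}^{2^*}(\Om)}.
\end{equation*}

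Finally, summing and using Lemma \ref{lemma:embeddingvelocity} gives $\sum_m\Dt\,\norm{\Pth{\vr_h^m}}^2_{W^{-1,(2^*)'}(\Om)}\le C\sum_m\Dt\,\norm{\vc{u}_h^m}^2_{\vc{L}^{2^*}(\Om)}=C\norm{\vc{u}_h}^2_{L^2(0,T;\vc{L}^{2^*}(\Om))}\le C$, which is the assertion. The one genuinely delicate point is the exponent bookkeeping in the middle paragraph: the H\"older exponents are \emph{forced} by the available integrability ($\vr_h\inb L^\infty(0,T;L^\gamma(\Om))$, $\vc{u}_h\inb L^2(0,T;\vc{L}^{2^*}(\Om))$, and a single power of $h$ to spend on $\Pi_h^Q\phi-\phi$), and the resulting constraint $\tfrac1{m_1}+\tfrac2{2^*}=1$ can be met together with $m_1\le\gamma$ precisely because $\gamma>\tfrac{N}{2}$; in two dimensions one additionally exploits the freedom to take $2^*$ as large as needed so that $m_1\le\gamma$. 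Note that, unlike in Lemma \ref{lemma:productionbound}, no use of the relation between $\Delta t$ and $h$ is required here, since the estimate is kept at each time level and summed only at the very end.
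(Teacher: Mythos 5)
Your argument is correct, and it is genuinely different from the paper's. The paper starts the same way (test with $\Pi_h^Q\phi^m$), but then performs the element-wise integration by parts of \eqref{eq:densitycalc}, splitting the upwind flux into the consistent volume term $\int_\Om \vrho_h^m\vu_h^m\cdot\Grad\phi^m\,\dx$ plus a term carrying only the density \emph{jump} $\jump{\vrho_h^m}_{\partial E}(\vu_h^m\cdot\nu)^-(\Pi_h^Q\phi^m-\phi^m)$; the volume term is bounded by H\"older with $\alpha=\frac{2^*(2^*)'}{2^*-(2^*)'}<\gamma$, and the jump term is handed to Lemma \ref{lemma:productionbound}, whose proof runs through the renormalized scheme, the $B''$-weighted numerical-diffusion control from the energy estimate, and the relation $\Delta t = ch$. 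You instead exploit the conservativity of the upwind flux to cancel it against the continuous test function outright (this is exactly the identity of Remark \ref{rem:E-VS-Gamma} with $\jump{\phi}_\Gamma=0$, legitimate since $2^*>N$ makes $\phi$ continuous), keep the \emph{full} flux $\vrho_\pm^m\abs{\vu_h^m\cdot\nu}$ paired with the $O(h)$ interpolation error, and close by trace/inverse scaling with the powers of $h$ cancelling exactly; your exponent constraint $m_1=\frac{2^*}{2^*-2}\le\gamma$ is milder than the paper's $\frac{2^*}{2^*-3}$ and holds without penalty precisely because $\gamma>\frac{N}{2}$. What each route buys: yours is more self-contained and elementary --- no renormalization, no $B''$ weights, no use of $\Delta t\sim h$, and only the $L^2(0,T;\vc{L}^{2^*})$ velocity bound is consumed (the paper's final display somewhat carelessly invokes $\norm{\vu_h}_{L^\infty(0,T;\vc{L}^{2^*})}$, which is not what Lemma \ref{lemma:embeddingvelocity} provides; your level-by-level summation avoids this). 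The paper's decomposition, on the other hand, isolates the term $\int\vrho_h\vu_h\Grad\phi$ together with an $h^{\theta(\gamma)}$-small remainder, and that is exactly the form reused later in Lemma \ref{lemma:densityconv} and in the effective-viscous-flux computation, so its heavier machinery is amortized over the rest of the convergence proof.
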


\begin{proof}
The proof is almost identical to the proof of 
Lemma 5.6 in \cite{Karlsen1} and is only 
included for the sake of completeness. 

Fix $\phi \in L^2(0,T;W^{1,2^*}(\Omega))$, and 
introduce the piecewise constant approximations 
$\phi_{h}:=\Pi_{h}^Q \phi$, $\phi_{h}^m:= \Pi_{h}^Q \phi^m$, and
$\phi^m:=\frac{1}{\Delta t}\int_{t^{m-1}}^{t^m} \phi(t,\cdot)\ dt$.

The continuity scheme \eqref{FEM:contequation} 
with $\phi_{h}^m$ as test function reads
\begin{equation}\label{eq:tcont1}
	\begin{split}
		& \Delta t\int_{\Omega} \Pth{\vr_h^m}\phi^m\ dxdt 
		\\ & \qquad 
		= \Delta t\sum_{\Gamma \in \Gamma^I_{h}}
		\int_\Gamma\left(\vrho^m_-(\vc{u}^{m}_h \cdot \nu)^+ 
		+\vrho^m_+(\vc{u}^{m}_h \cdot \nu)^-\right)
		\jump{\phi^m_{h}}_\Gamma\ dS(x).
	\end{split}
\end{equation}
Since the traces of $\phi^m$ taken from either side of a 
face are equal, we can write
\begin{equation}\label{eq:densitycalc}
\begin{split}
	& \sum_{\Gamma \in \Gamma^I_{h}}
	\int_\Gamma \left(\vrho^m_-(\vc{u}^{m}_h \cdot \nu)^+ 
	+\vrho^m_+(\vc{u}^{m}_h \cdot \nu)^-\right)\jump{\phi^m_{h}}_\Gamma\ dx \\
	&=\sum_{\Gamma \in \Gamma^I_{h}}\int_{\Gamma} \left( \vrho^m_{+}(\vc{u}^m_{h} \cdot \nu)^- 
	+ \vrho^m_{-}(\vc{u}^m_{h} \cdot \nu)^+\right)\jump{\phi^m_{h} - \phi^m}\ dS(x), \\
	&= -\sum_{E \in E_{h}}\int_{\binner} \left( \vrho^m_{+}(\vc{u}^m_{h} \cdot \nu)^+ 
	+ \vrho^m_{-}(\vc{u}^m_{h} \cdot \nu)^-\right)(\phi^m_{h}- \phi^m)\ dS(x), \\
	&= \sum_{E \in E_h}\int_{E}- \Div (\vrho^m_{h}\vc{u}^m_{h}(\phi^m_{h} - \phi^m))\ dx \\
	&\qquad \qquad
	+ \sum_{E \in E_{h}}\int_{\binner }\jump{\vrho^m_{h}}_{\partial E}(\vc{u}^m_{h} \cdot \nu)^-
	(\phi^m_{h} - \phi^m)\ dS(x) \\
	&= \int_{\Omega}\vrho^m_{h}\vc{u}^m_{h}\cdot \Grad \phi^m\ dx  
	+ \sum_{E \in E_{h}}\int_{\binner}
	\jump{\vrho^m_{h}}_{\partial E}(\vc{u}^m_{h} \cdot \nu)^-
	(\phi^m_{h} - \phi^m)\ dS(x).
\end{split}
\end{equation}
To conclude the last equality, we have used
\begin{equation*}
\int_E \vrho_h^m \Div \vc{u}_h^m (\phi_h^m - \phi^m)~dx = (\vrho_h^m \Div \vc{u}_h^m)|_E \int_E \Pi_h^Q\phi^m -  \phi^m~dx = 0, \quad \forall E \in E_h,
\end{equation*}
since both $\vrho_h^m$ and $\Div \vc{u}_h^m$ are piecewise constant.

By summing \eqref{eq:tcont1} over $m$, taking 
absolute values, and using the above identity, we find
\begin{equation*}
	\begin{split}
		&\left|\sum_{m=1}^M\Delta t \int_{\Omega} 
		\Pth{\vr_h^m}\phi^m\ dxdt\right| \\
		&\qquad \qquad \leq \left|\sum_{m=1}^M \Delta t \int_{\Omega}\vrho_{h}^m 
		\vc{u}_{h}^m \Grad \phi^m\ dx\right| \\
		&\qquad \qquad \qquad+\left|\sum_{m=1}^M \sum_{E \in E_{h}}\Delta t
		\int_{\binner}\jump{\vrho^m_{h}}_{\partial E}(\vc{u}^m_{h} \cdot \nu)^-
		(\phi_{h}^m - \phi^m)\ dS(x)\right|.
	\end{split}
\end{equation*}
Using Lemma \ref{lemma:productionbound}, together with an 
application of H\"older's inequality, we deduce
\begin{align*}
	&\left|\sum_{m=1}^M\Delta t \int_{\Omega} 
	\Pth{\vr_h^m}\phi^m\ dx\right| \\
	&\leq \sum_{m=1}^M \Delta t \|\vr_h^m\|_{L^\alpha(\Om)}\|\vu_h^m\|_{\vc{L}^{2^*}(\Om)}\|\Grad \phi^m\|_{\vc{L}^{2^*}(\Om)}  
			+Ch^{\theta(\gamma)}\|\Grad \phi\|_{L^2(0,T;\vc{L}^{2^*}(\Om))} \\
	&\leq \|\vr_h\|_{L^\infty(0,T;L^\alpha(\Om))}\|\vu_h\|_{L^\infty(0,T;\vc{L}^{2^*}(\Om))}\|\Grad \phi\|_{L^2(0,T;\vc{L}^{2^*}(\Om))}  \\
	&\qquad \qquad + C \, h^{\theta(\gamma)}
	\norm{\Grad \phi}_{L^2(0,T;\vc{L}^{2^*}(\Omega))}, 
\end{align*}
where $\alpha = \frac{2^* (2^*)'}{2^* - (2^*)'} < \gamma$ since $\gamma > \frac{N}{2}$ 
and $\frac{1}{\alpha} + \frac{1}{2^*} + \frac{1}{2^*} = 1$.
By Lemma \ref{lemma:stability}, the right--hand side is bounded, so we conclude that
\begin{align*}
	&\left|\int_{\Delta t}^T\int_{\Omega}
		\Pth{\vr_h}\phi\ dxdt\right| \\
	&=\left|\sum_{m=1}^M\Delta t \int_{\Omega} 
	\Pth{\vr_h^m}\phi^m\ dx\right| 
	\leq C\, (1+h^{\theta(\gamma)})
	\norm{\Grad \phi}_{L^2(0,T;\vc{L}^{2^*}(\Omega))}.
\end{align*}
This brings the proof to an end.
\end{proof}

\begin{lemma}\label{lemma:velocitycont}
\solutiontext Then
\begin{equation*}
\Pth{\vc{u}_{h}} \inb L^{2}(0,T;W^{-1,1}(\Omega)),
\end{equation*}

\end{lemma}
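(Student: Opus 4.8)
The plan is to test the velocity scheme \eqref{FEM:momentumeq} against an arbitrary smooth vector field and bound the resulting terms. Fix $\vc{\phi}\in \vc{C}_c^\infty(\Om)$ with $\vc{\phi}\cdot\nu|_{\pOm}=0$ and set $\vc{v}_h=\Pi_h^V\vc{\phi}$ in the first equation of \eqref{FEM:momentumeq}. This gives, for each $m$,
\begin{equation*}
	\int_\Om \Pth{\vc{u}_h^m}\Pi_h^V\vc{\phi}\ dx
	= -\int_\Om \mu\Curl\vc{w}_h^m\,\Pi_h^V\vc{\phi} + \left[(\mu+\lambda)\Div\vc{u}_h^m - p(\vr_h^m)\right]\Div\Pi_h^V\vc{\phi}\ dx.
\end{equation*}
The first step is to replace $\Pi_h^V\vc{\phi}$ by $\vc{\phi}$ up to a controllable error: by Lemma \ref{lemma:interpolation}, $\|\Pi_h^V\vc{\phi}-\vc{\phi}\|_{\vc{L}^2(\Om)}\leq Ch\|\Grad\vc{\phi}\|_{\vc{L}^2(\Om)}$ and $\|\Div(\Pi_h^V\vc{\phi}-\vc{\phi})\|_{L^2(\Om)}\leq Ch\|\Grad^2\vc{\phi}\|_{\vc{L}^2(\Om)}$, and the terms on the right are all bounded in $L^2(0,T;L^2(\Om))$ (the $\Curl\vc{w}_h$, $\Div\vc{u}_h$ terms by Lemma \ref{lemma:stability}, the pressure term $p(\vr_h)=a\vr_h^\gamma$ in $L^\infty(0,T;L^1(\Om))$ by Lemma \ref{lemma:stability}, so with the commuting diagram $\Div\Pi_h^V\vc{\phi}=\Pi_h^Q\Div\vc{\phi}$ one controls the pressure pairing in $L^\infty(0,T;L^1)$ against $\|\Div\vc{\phi}\|_{L^\infty}$). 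For the remaining pairing with $\vc{\phi}$ itself: the viscosity and divergence terms are bounded by $\|\vc{w}_h\|_{L^2(0,T;\mathbf{W}^{\mathrm{curl},2})}$, $\|\Div\vc{u}_h\|_{L^2(0,T;L^2)}$ times $\|\vc{\phi}\|_{\vc{W}^{1,\infty}}$, hence give a bound in $L^2(0,T)$ of $\|\Pth{\vc{u}_h}\|$ measured against $\vc{\phi}$ in a $W^{1,\infty}$-type norm; the pressure term is bounded by $\|p(\vr_h)\|_{L^\infty(0,T;L^1(\Om))}\|\Div\vc{\phi}\|_{L^\infty(\Om)}\leq C\|\vc{\phi}\|_{W^{1,\infty}(\Om)}$.

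Putting these together, for a.e.\ $t$ and every admissible test field $\vc{\phi}$,
\begin{equation*}
	\left|\int_\Om \Pth{\vc{u}_h}(t)\,\vc{\phi}\ dx\right| \leq g_h(t)\,\|\vc{\phi}\|_{W^{1,\infty}(\Om)},
\end{equation*}
where $g_h(t)$ collects $\mu\|\Curl\vc{w}_h(t)\|_{\vc{L}^2}+C\|\Div\vc{u}_h(t)\|_{L^2}+C\|p(\vr_h(t))\|_{L^1}+O(h)(\cdots)$. Since $W^{1,\infty}(\Om)$ is (densely) contained in a suitable space whose dual contains $W^{-1,1}(\Om)$ — more precisely, testing against $W^{1,\infty}$ fields is exactly the pairing that detects $W^{-1,1}$, because $W^{-1,1}(\Om)=(W^{1,\infty}_0(\Om))^\star$ in the relevant sense after accounting for the boundary condition $\vc{\phi}\cdot\nu=0$ — this yields $\|\Pth{\vc{u}_h}(t)\|_{W^{-1,1}(\Om)}\leq g_h(t)$. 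Squaring and integrating in $t$, and invoking Lemma \ref{lemma:stability} (which bounds $\sum_k\Dt\|\vc{w}_h^k\|_{\mathbf{W}^{\mathrm{curl},2}}^2$, $\sum_k\Dt\|\vc{u}_h^k\|_{\mathbf{W}^{\mathrm{div},2}}^2$, and $\|p(\vr_h)\|_{L^\infty(0,T;L^1)}$) together with the relation $\Dt=ch$, gives $\|\Pth{\vc{u}_h}\|_{L^2(0,T;W^{-1,1}(\Om))}\leq C$ uniformly in $h$. This is the claimed bound.

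The main obstacle is the careful handling of the duality: $W^{-1,1}(\Om)$ is the dual of something like $W^{1,\infty}_0(\Om)$ (a non-reflexive, somewhat delicate space), and one must check that the $L^2$ and $L^1$ bounds above genuinely translate into a $W^{-1,1}$ bound rather than only a bound against a strict subclass of test functions — in particular the pressure term only lives in $L^1$ in space, so it is essential that the test functions be taken in $L^\infty$ (equivalently, $W^{1,\infty}$ after integrating by parts against $\Div\vc{\phi}$), which is exactly what pairs with $W^{-1,1}$. A secondary technical point is that the canonical interpolant $\Pi_h^V$ requires $\vc{\phi}\in\vc{W}^{1,2}$ (it is, since $\vc{\phi}$ is smooth), and that one uses the commuting relation $\Div\Pi_h^V\vc{\phi}=\Pi_h^Q\Div\vc{\phi}$ to control the pressure pairing cleanly. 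Once these are in place, the estimate is routine, and indeed this is why the statement is recorded as a short lemma: the proof is a direct computation mirroring Lemma \ref{lemma:timecont}.
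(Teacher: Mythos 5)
Your overall strategy coincides with the paper's: test the momentum scheme with $\vc{v}_h=\Pi_h^V\vc{\phi}$, bound the resulting terms via Lemma \ref{lemma:stability}, and read off the $W^{-1,1}$ bound by duality against $W^{1,\infty}$ test fields. However, there is a genuine gap at the one non-routine step. The scheme only controls $\int_\Om \Pth{\vu_h}\,\Pi_h^V\vc{\phi}\ dx$, whereas the claimed bound requires control of $\int_\Om \Pth{\vu_h}\,\vc{\phi}\ dx$ for all admissible $\vc{\phi}$. Your justification for ``replacing $\Pi_h^V\vc{\phi}$ by $\vc{\phi}$'' --- the $O(h)$ interpolation error together with the observation that ``the terms on the right are all bounded'' --- only covers the right-hand side of the scheme, i.e.\ $\Curl\vc{w}_h$, $\Div\vu_h$ and $p(\vr_h)$ paired with the test function. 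It says nothing about the left-hand side error
$\int_0^T\int_\Om \Pth{\vu_h}\left(\vc{\phi}-\Pi_h^V\vc{\phi}\right)dxdt$,
and this term cannot be dismissed as $O(h)\cdot O(1)$: the discrete time derivative $\Pth{\vu_h}$ is \emph{not} uniformly bounded in $L^2(0,T;\vc{L}^2(\Om))$, and $\Pi_h^V$ is the canonical degrees-of-freedom interpolant, not an $L^2$-projection, so there is no orthogonality to invoke either.

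The missing ingredient is the crude quantitative bound
$\norm{\Pth{\vu_h}}_{L^2(0,T;\vc{L}^2(\Om))}=(\Delta t)^{-\frac12}\bigl(\sum_{m}\norm{\jump{\vu_h^{m-1}}}_{\vc{L}^2(\Om)}^2\bigr)^{\frac12}\le Ch^{-\frac12}$,
which follows from the numerical-dissipation term $\frac12\sum_{m}\norm{\jump{\vu_h^{m-1}}}_{\vc{L}^2(\Om)}^2\le C$ in Lemma \ref{lemma:stability} combined with the standing relation $\Delta t = c h$. Paired with the $O(h)$ interpolation error of Lemma \ref{lemma:interpolation}, this gives $O(h\cdot h^{-1/2})=O(h^{1/2})\to 0$ for the left-hand side error, which is exactly how the paper closes the argument. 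This is the step where the time-step/mesh relation and the energy dissipation genuinely enter, so the lemma is not merely ``a direct computation mirroring Lemma \ref{lemma:timecont}.'' The remainder of your estimate (the treatment of the viscous and pressure terms, including pairing the $L^\infty(0,T;L^1)$ pressure against $\Div\vc{\phi}\in L^\infty$, and the duality with $W^{1,\infty}_0$) agrees with the paper's proof.
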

\begin{proof}
By adding and subtracting, we see that for any $\vc{\phi} \in L^2(0,T;\vc{W}^{1,\infty}_{0}(\Omega))$,
\begin{align*}
&\int_{0}^T\int_{\Omega}\Pth{\vc{u}_{h}}\vc{\phi}\ dxdt \\
&= \int_{0}^T\int_{\Omega}\Pth{\vc{u}_{h}} \Pi_{h}^V\vc{\phi}\ dxdt
+ \int_{0}^T\int_{\Omega}\Pth{\vc{u}_{h}} \left(\vc{\phi}- \Pi_{h}^V \vc{\phi}\right)\ dxdt.
\end{align*}
From the first equation of the momentum scheme \eqref{FEM:momentumeq} with $\vc{v}_{h} = \Pi_{h}^V \vc{\phi}$, we have that
\begin{equation*}
\begin{split}
\int_{0}^T\int_{\Omega}\Pth{\vc{u}_{h}} \Pi_{h}^V\vc{\phi} dxdt
&= -\int_{0}^T \int_{\Omega}\mu \Curl \vc{w}_{h}\left(\Pi_{h}^V \vc{\phi}\right) + (\mu + \lambda)\Div \vc{u}_{h} \Div \Pi_{h}^V \vc{\phi}\ dxdt \\
&\qquad +\int_{0}^T\int_{\Omega} a\varrho_{h}^\gamma \Div \Pi_{h}^V \vc{\phi} \ dxdt\\
&\leq C\left(\|\Curl \vc{w}_{h}\|_{L^2(0,T;\vc{L}^2(\Omega)}\|\vc{\phi}\|_{L^2(0,T;\vc{L}^2(\Omega)} \right.\\
&\qquad + \|\Div \vc{u}_{h}\|_{L^2(0,T;L^2(\Omega))}\|\Div \vc{\phi}\|_{L^2(0,T;L^2(\Omega))}  \\
& \qquad +\left. \|\varrho_{h}\|_{L^{\infty}(0,T;L^{\gamma }(\Omega))}\|\Div \vc{\phi}\|_{L^{1}(0,T;L^{\infty}(\Omega))}\right) \\
&\leq C \| \vc{\phi}\|_{L^2(0,T;\vc{W}^{1,\infty}(\Om))},
\end{split}
\end{equation*}
where the last inequality follows from Lemma \ref{lemma:stability}.

From Lemma \ref{lemma:stability},  we also have the estimate
\begin{equation}\label{eq:timediv}
\|\Pth{\vc{u}_{h}}\|_{L^2(0,T;\vc{L}^2(\Omega))} 
= (\Delta t)^{-\frac{1}{2}}\left(\sum_{m=1}^M \int_{\Om}\jump{\vc{u}_{h}^{m-1}}^2\ dx\right)^\frac{1}{2} 
\leq h^{-\frac{1}{2}}C.
\end{equation}
Using \eqref{eq:timediv}, we estimate
\begin{equation*}
\begin{split}
\int_{\Delta t}^T \int_{\Omega}\frac{d\left(\Pi_{\mathcal{L}}\vc{u}_{h}\right)}{dt}\left(\vc{\phi}- \Pi_{h}^V \vc{\phi}\right) dxdt
&\leq C\|\Pth{\vc{u}_{h}}\|_{L^2(0,T;\vc{L}^2(\Omega))} \|\vc{\phi}- \Pi_{h}^V \vc{\phi}\|_{L^2(0,T;\vc{L}^2(\Omega)} \\
& \leq C\frac{h}{\sqrt{\Delta t}}\|\Grad\phi\|_{L^2(0,T;\vc{L}^2(\Omega))} 
\leq Ch^\frac{1}{2}, 
\end{split}
\end{equation*}
where we in the last inequality  have used the relation $\Delta t = \kappa h$. 
Combining the previous estimates concludes the proof.
\end{proof}

Recall our notation for the Hodge decomposition of the solution $\vu$,
$$
\vu = \Curl \vc{\zeta} + \Grad s.
$$
In the next result, we prove that $\partial_t (\Curl \vc{\zeta}) \in L^2(0,T;\vc{L}^2(\Om))$.
To see why such a bound is reasonable, apply the $\Curl$ operator to 
the velocity equation \eqref{eq:vorticity-form} 
\begin{equation*}
	\Curl (\Curl \vc{\zeta})_t + \mu\Curl \Curl \vc{w} = 0.
\end{equation*}
Multiplying with $\vc{\zeta}_t$, integrating by parts in space,
and applying  H\"older's inequality, 
\begin{equation*}
	\|\Curl \vc{\zeta}_t\|_{L^2(\Om)}^2 
		\leq \epsilon \|\Curl \vc{\zeta}_t\|_{L^2(\Om)}^2
		 	+ \frac{C}{\epsilon}\|\Curl \vc{w}\|_{\vc{L}^2(\Om)}^2.
\end{equation*}
Fixing $\epsilon$ small, and integrating in time
\begin{equation*}
	\int_0^T \|\Curl \vc{\zeta}_t\|_{L^2(\Om)}^2~dt \leq C\int_0^T\|\Curl \vc{w}\|_{\vc{L}^2(\Om)}^2~dt,
\end{equation*}
where the right-hand side is bounded. Consequently, it is the higher regularity on 
$\vc{w} = \Curl \vu$ that enable us to obtain the bound.

\begin{lemma}\label{lem:lpbound}
\solutiontext 
Let $\Set{(\vc{\zeta}_{h},\vc{z}_{h})}_{h>0}$ be the sequence given by the decomposition
$\vc{u}_{h}(\cdot,t) = \Curl \vc{\zeta}_{h}(\cdot, t) + \vc{z}_{h}(\cdot,t)$
and $\vc{\zeta}_{h}(\cdot,t) \in \vc{W}_{h}^{0,\perp}(\Om)$, $\vc{z}_{h}(\cdot, t) \in \vc{V}_{h}^{0,\perp}(\Om)$, for $t \in (0,T)$.
Then 
\begin{equation*}
\Pth{\Curl \vc{\zeta}_{h}} \inb L^2(0,T;\vc{L}^2(\Omega)).
\end{equation*}
\end{lemma}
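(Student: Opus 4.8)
The plan is to mimic the formal computation sketched just before the statement, but at the discrete level, using the mixed scheme \eqref{FEM:momentumeq} with carefully chosen test functions. Recall the discrete Hodge decomposition $\vc{u}_h^m = \Curl \vc{\zeta}_h^m + \vc{z}_h^m$ with $\vc{\zeta}_h^m \in \vc{W}_h^{0,\perp}$, $\vc{z}_h^m \in \vc{V}_h^{0,\perp}$. First I would test the first equation of \eqref{FEM:momentumeq} with $\vc{v}_h = \Curl \vc{\zeta}_h^m$: since $\Div \Curl \vc{\zeta}_h^m = 0$, the $(\mu+\lambda)\Div\vc{u}_h^m$ term and the pressure term drop out, leaving
\begin{equation*}
	\int_\Om \Pth{\vc{u}_h^m}\Curl \vc{\zeta}_h^m\ dx + \mu\int_\Om \Curl \vc{w}_h^m\, \Curl \vc{\zeta}_h^m\ dx = 0.
\end{equation*}
Because $\vc{z}_h^m \in \vc{V}_h^{0,\perp}$ is orthogonal (in $\vc{L}^2$) to the range of $\Curl$ on $\vc{W}_h$, the first integral equals $\int_\Om \Pth{\Curl\vc{\zeta}_h^m}\Curl\vc{\zeta}_h^m\ dx$ (using that $\Pt^h$ acts linearly on the decomposition, $\Pth{\vc{u}_h^m} = \Pth{\Curl\vc{\zeta}_h^m} + \Pth{\vc{z}_h^m}$, and $\Pth{\vc{z}_h^m}\in\vc{V}_h^{0,\perp}$). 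Hence
\begin{equation*}
	\int_\Om \Pth{\Curl \vc{\zeta}_h^m}\,\Curl \vc{\zeta}_h^m\ dx = -\mu\int_\Om \Curl \vc{w}_h^m\,\Curl \vc{\zeta}_h^m\ dx.
\end{equation*}

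Next I would rewrite the right-hand side using the second equation of \eqref{FEM:momentumeq}, namely $\int_\Om \vc{w}_h^m \vc{\eta}_h - \vc{u}_h^m \Curl \vc{\eta}_h\ dx = 0$ for all $\vc{\eta}_h \in \vc{W}_h$. Taking $\vc{\eta}_h = \vc{w}_h^m$ gives $\int_\Om |\vc{w}_h^m|^2 = \int_\Om \vc{u}_h^m \Curl \vc{w}_h^m\ dx = \int_\Om \Curl\vc{\zeta}_h^m\,\Curl\vc{w}_h^m\ dx$ (again by the orthogonality of $\vc{z}_h^m$ to curls). Therefore the right-hand side above is simply $-\mu\|\vc{w}_h^m\|_{\vc{L}^2(\Om)}^2$. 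On the left, apply the elementary identity $a(a-b) = \tfrac12(a^2 - b^2) + \tfrac12(a-b)^2$ with $a = \Curl\vc{\zeta}_h^m$, $b = \Curl\vc{\zeta}_h^{m-1}$, so that after multiplying by $\Delta t$ and summing over $m=1,\dots,k$ the telescoping yields
\begin{equation*}
	\tfrac12\|\Curl\vc{\zeta}_h^k\|_{\vc{L}^2(\Om)}^2 + \tfrac{\Delta t^2}{2}\sum_{m=1}^k \|\Pth{\Curl\vc{\zeta}_h^m}\|_{\vc{L}^2(\Om)}^2 + \mu\,\Delta t\sum_{m=1}^k\|\vc{w}_h^m\|_{\vc{L}^2(\Om)}^2 = \tfrac12\|\Curl\vc{\zeta}_h^0\|_{\vc{L}^2(\Om)}^2.
\end{equation*}
This already shows $\Delta t\sum\|\Pth{\Curl\vc{\zeta}_h^m}\|^2$ is bounded, but that is an $O(\Delta t)$-weaker bound than wanted; the point of the formal computation is that the true estimate should not lose a power of $\Delta t$.

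To get the sharp bound $\Pth{\Curl\vc{\zeta}_h}\inb L^2(0,T;\vc{L}^2(\Om))$ I would instead test the first equation of \eqref{FEM:momentumeq} with $\vc{v}_h = \Pth{\Curl\vc{\zeta}_h^m} = \Curl\big(\Pth{\vc{\zeta}_h^m}\big)$, which is an admissible test function in $\vc{V}_h$ and again annihilates the divergence and pressure terms. This gives
\begin{equation*}
	\|\Pth{\Curl\vc{\zeta}_h^m}\|_{\vc{L}^2(\Om)}^2 = -\mu\int_\Om \Curl\vc{w}_h^m\,\Curl\big(\Pth{\vc{\zeta}_h^m}\big)\ dx.
\end{equation*}
Now use the second equation of \eqref{FEM:momentumeq} with $\vc{\eta}_h = \Pth{\vc{\zeta}_h^m}\in\vc{W}_h^{0,\perp}$ to convert $\int_\Om \Curl\vc{w}_h^m\,\Curl(\Pth{\vc{\zeta}_h^m})\ dx$ into an expression involving $\vc{w}_h^m$ paired against something controlled by the discrete Poincaré inequality \eqref{eq:Poincare2}: indeed, since $\Pth{\vc{\zeta}_h^m}\in\vc{W}_h^{0,\perp}$ we have $\|\Pth{\vc{\zeta}_h^m}\|_{\vc{L}^2(\Om)} \le C\|\Curl\Pth{\vc{\zeta}_h^m}\|_{\vc{L}^2(\Om)} = C\|\Pth{\Curl\vc{\zeta}_h^m}\|_{\vc{L}^2(\Om)}$. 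Then by Cauchy--Schwarz and Young's inequality
\begin{equation*}
	\|\Pth{\Curl\vc{\zeta}_h^m}\|_{\vc{L}^2(\Om)}^2 \le \epsilon\|\Pth{\Curl\vc{\zeta}_h^m}\|_{\vc{L}^2(\Om)}^2 + \frac{C}{\epsilon}\|\vc{w}_h^m\|_{\vc{W}^{\mathrm{curl},2}(\Om)}^2,
\end{equation*}
absorb the $\epsilon$-term on the left, multiply by $\Delta t$, and sum over $m$; the right-hand side is then bounded by $\Delta t\sum_m\|\vc{w}_h^m\|_{\vc{W}^{\mathrm{curl},2}(\Om)}^2$, which is controlled by the stability estimate of Lemma \ref{lemma:stability}. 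This yields exactly $\Pth{\Curl\vc{\zeta}_h}\inb L^2(0,T;\vc{L}^2(\Om))$.

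The main obstacle I anticipate is making the integration-by-parts step fully rigorous at the discrete level: one must check that $\int_\Om \Curl\vc{w}_h^m\,\Curl(\Pth{\vc{\zeta}_h^m})\ dx$ can indeed be re-expressed purely in terms of $\vc{w}_h^m$ and $\Pth{\vc{\zeta}_h^m}$ via the second mixed equation without any stray boundary or consistency terms — this is precisely where the commuting-diagram structure of the Nédélec pair and the definition of $\vc{W}_h^{0,\perp}$, $\vc{V}_h^{0,\perp}$ are essential, and one should verify that $\Pth{\vc{\zeta}_h^m}$ really lands in $\vc{W}_h^{0,\perp}$ (it does, since $\vc{W}_h^{0,\perp}$ is a linear subspace and each $\vc{\zeta}_h^m$ lies in it). A secondary technical point is the treatment of the initial term $\Curl\vc{\zeta}_h^0$, which is bounded in $\vc{L}^2(\Om)$ uniformly in $h$ by Lemma \ref{lemma:hodge}. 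Everything else is routine telescoping and Young's inequality.
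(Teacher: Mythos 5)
Your proposal is correct and uses exactly the paper's argument: test the first equation of \eqref{FEM:momentumeq} with $\vc{v}_h=\Pth{\Curl \vc{\zeta}_h^m}\in\Curl\vc{W}_h\subset\vc{V}_h$, kill the divergence and pressure terms, use orthogonality of the Hodge decomposition to identify the time--derivative term with $\norm{\Pth{\Curl\vc{\zeta}_h^m}}_{\vc{L}^2(\Om)}^2$, and absorb via Cauchy--Schwarz and Young against $\norm{\Curl\vc{w}_h}_{L^2(0,T;\vc{L}^2(\Om))}$, which Lemma \ref{lemma:stability} controls. One remark: the intermediate step where you invoke the second mixed equation with $\vc{\eta}_h=\Pth{\vc{\zeta}_h^m}$ and the discrete Poincar\'e inequality is both unnecessary and not actually available --- that equation relates $\int_\Om\vc{w}_h^m\vc{\eta}_h\,dx$ to $\int_\Om\vu_h^m\Curl\vc{\eta}_h\,dx$ and cannot be used to rewrite $\int_\Om\Curl\vc{w}_h^m\,\Curl(\Pth{\vc{\zeta}_h^m})\,dx$; but your final Young-type estimate follows directly from Cauchy--Schwarz applied to the identity you already have (with $\norm{\Curl\vc{w}_h^m}_{\vc{L}^2(\Om)}$ rather than $\norm{\vc{w}_h^m}_{\vc{W}^{\mathrm{curl},2}(\Om)}$ on the right), which is precisely what the paper does, so deleting that detour leaves a complete proof.
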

\begin{proof}
For any $m=1, \ldots, M$, let $\vc{v}_{h}^m = \Pth{\Curl \vc{\zeta}_{h}^m} \in \vc{V}_h$. 
Observe that by the orthogonality of the Hodge decomposition,
\begin{equation*}
	\int_\Om \Pth{\vu_h^m} \Pth{\Curl \vc{\zeta}_{h}^m}~dx =  \int_{\Omega}\abs{\Pth{\Curl \vc{\zeta}_{h}^m}}^2\ dx.
\end{equation*}
Hence, by setting $\vc{v}_{h}^m$ as test function in the first equation of the momentum scheme
\eqref{FEM:momentumeq}, multiplying with $\Delta t$, and summing over all $m=1, \ldots, M$, 
we obtain
\begin{equation*}
\begin{split}
&\sum_{m=1}^M \Delta t \int_{\Omega}\abs{\Pth{\Curl \vc{\zeta}_{h}^m}}^2\ dxdt \\
&\qquad = -\sum_{m=1}^M\Delta t \int_{\Omega}\mu \Curl \vc{w}_{h}^m\Pth{\Curl \vc{\zeta}_{h}^m}\ dxdt \\
&\qquad \leq \mu\left(\sum_{m=1}^M \Delta t \|\Pth{\Curl \vc{\zeta}_{h}^m}\|^2\right)^\frac{1}{2}\left(\sum_{m=1}^M\Delta t\|\Curl \vc{w}^m_{h}\|_{\vc{L}^2(\Omega)}^2\right)^\frac{1}{2}.
\end{split}
\end{equation*}

An application of the Cauchy inequality with $\epsilon$ to the above estimate
yields
\begin{equation*}
\|\Pth{\Curl \vc{\zeta}_{h}}\|_{L^2(0,T;\vc{L}^2(\Omega))} \leq \frac{\mu}{2}\|\Curl \vc{w}_{h}\|_{L^2(0,T;\vc{L}^2(\Omega))}.
\end{equation*}
Lemma \ref{lemma:stability} provides a bound on the right--hand side and 
hence the proof is complete.

\end{proof}

\section{Higher intergrability on the density}\label{sec:higherint}
The stability estimate only provides the 
bound $p(\vrho_{h}) \inb L^\infty(0,T;L^1(\Omega))$.
Hence, it is not clear that $p(\vrho_{h})$ converges 
weakly to an integrable function. Moreover, 
the subsequent analysis relies heavily on the pressure 
having higher integrability. In this section
 we establish that the density is in fact bounded 
in $L^{\gamma+1}(0,T;L^{\gamma+1}(\Omega))$, independently of $h$.
The main technical tool used to achieve this is an equation for the effective viscous flux:
$$
	\eff(\varrho, \vc{u}) = p(\varrho) - (\lambda + \mu)\Div \vc{u}.
$$
We start by deriving this equation.
For this purpose,
fix any $\phi \in L^2(0,T;L_0^2(\Omega))$ 
and, for each fixed $h>0$, let
$$
	\vc{v}_{h}(t, \cdot) =  \Aoph{\phi}(t, \cdot),\quad t \in (0,T),
$$
and
$$
\vc{v}_{h}^m = \frac{1}{\Delta t}\int_{t^{m-1}}^{t^m}\vc{v}_{h}( s, \cdot)\ ds, \quad m =1, \ldots, M.
$$
Observe that $\vc{v}_h$ is constructed such that
$$
\Div \vc{v}_h^m = \frac{1}{\Delta t}\int_{t^{m-1}}^{t^m} \phi~dt, \quad m=1, \ldots, M.
$$

By inserting $\vc{v}_{h}^m$ as test function in the momentum scheme \eqref{FEM:momentumeq},
multiplying with $\Delta t$, and summing over
all $m=1, \ldots, M$, we are led to the identity
\begin{equation*}
\begin{split}
\int_{0}^{T}\int_{\Omega} \eff(\varrho_{h}, \vc{u}_{h}) \phi\ dxdt 
=\int_{0}^{T}\int_{\Om}\left(\Pth{\vc{u}_{h}} + \mu \Curl \vc{w}_{h}\right)  \Aoph{\phi}\ dxdt.
\end{split}
\end{equation*}
Since $\int_{\Omega} (\Curl \vc{w}_{h}^m) \Aop{\phi} dx = 0$, for all $m=1, \ldots, M$, we can further write
\begin{equation}
\label{eq:effvisc}
\begin{split}
&\int_{0}^{T}\int_{\Omega} \eff(\varrho_{h}, \vc{u}_{h}) \phi\ dxdt \\
&=\int_{0}^{T}\int_{\Om}\Pth{\vc{u}_{h}}   \Aop{\phi}\ dxdt \\
&\qquad \quad + \int_{0}^T \int_{\Omega}\left(\Pth{\vc{u}_{h}} + \mu \Curl \vc{w}_{h} \right)\left(\Aoph{\phi} - \Aop{\phi}\right)\ dxdt.
\end{split}
\end{equation}
As $\phi$  was fixed arbitrary, we can conclude that \eqref{eq:effvisc} holds for all
$\phi \in L^2(0,T;L_0^2(\Omega))$.

The following lemma ensures that the last term of \eqref{eq:effvisc} converges to zero.

\begin{lemma}\label{lemma:interpolationerrorbound} \solutiontext
Then there exists a constant $C>0$, depending only on the initial data
and the shape regularity of $E_h$, such that
\begin{equation*}
\begin{split}
&\left|\int_{0}^T \int_{\Omega}\left(\Pth{\vc{u}_{h}} + \mu \Curl \vc{w}_{h}\right)\left(\Aoph{\phi} - \Aop{\phi}\right)\ dxdt\right| \\
& \leq C(h^\frac{1}{2} + h)\|\phi\|_{L^2(0,T;L^2(\Omega))}, \quad \forall \phi \in L^2(0,T;L^2_{0}(\Omega)).
\end{split}
\end{equation*}
\end{lemma}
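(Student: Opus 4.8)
The plan is to estimate the two pieces of the integrand separately, in each case splitting off the interpolation error $\Aoph{\phi}-\Aop{\phi}$ and using Lemma~\ref{lemma:interpolation} to gain a power of $h$. First I would record the elliptic regularity estimate for the operator $\gradlaplaceinv{\cdot}=\Grad\Delta^{-1}[\cdot]$: since $\Delta^{-1}$ maps $L^2_0(\Om)$ into $W^{2,2}(\Om)\cap W^{1,2}_0$ (using convexity of $\Om$), we have $\norm{\gradlaplaceinv{\phi}}_{\vc{W}^{1,2}(\Om)}\le C\norm{\phi}_{L^2(\Om)}$, so that $\Aop{\phi}\in \vc{W}^{\operatorname{div},2}_0\cap\vc{W}^{1,2}$ and the interpolant $\Aoph{\phi}=\Pi_h^V\gradlaplaceinv{\phi}$ is well defined. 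Then Lemma~\ref{lemma:interpolation} gives
\begin{equation*}
	\norm{\Aoph{\phi^m}-\Aop{\phi^m}}_{\vc{L}^2(\Om)}\le Ch\,\norm{\Grad\gradlaplaceinv{\phi^m}}_{\vc{L}^2(\Om)}\le Ch\,\norm{\phi^m}_{L^2(\Om)},
\end{equation*}
and after summing against $\Delta t$ and using $\phi^m=\frac1{\Delta t}\int_{t^{m-1}}^{t^m}\phi\,dt$ (so $\sum_m\Delta t\norm{\phi^m}_{L^2}^2\le\norm{\phi}_{L^2(0,T;L^2)}^2$), one obtains $\norm{\Aoph{\phi}-\Aop{\phi}}_{L^2(0,T;\vc{L}^2(\Om))}\le Ch\norm{\phi}_{L^2(0,T;L^2(\Om))}$.

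Next I would handle the curl term. By Cauchy–Schwarz in space-time,
\begin{equation*}
	\left|\int_0^T\!\!\int_\Om \mu\Curl\vc{w}_h\,(\Aoph{\phi}-\Aop{\phi})\,dxdt\right|\le \mu\norm{\Curl\vc{w}_h}_{L^2(0,T;\vc{L}^2(\Om))}\,\norm{\Aoph{\phi}-\Aop{\phi}}_{L^2(0,T;\vc{L}^2(\Om))},
\end{equation*}
and Lemma~\ref{lemma:stability} bounds $\norm{\Curl\vc{w}_h}_{L^2(0,T;\vc{L}^2(\Om))}$ uniformly in $h$; together with the interpolation estimate this produces the $Ch\norm{\phi}_{L^2(0,T;L^2)}$ contribution. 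For the time-derivative term I would again use Cauchy–Schwarz, but now the natural bound on $\Pth{\vc{u}_h}$ in $L^2(0,T;\vc{L}^2(\Om))$ carries a factor $h^{-1/2}$ (see \eqref{eq:timediv} in the proof of Lemma~\ref{lemma:velocitycont}). Combining that with the $O(h)$ interpolation estimate yields
\begin{equation*}
	\left|\int_0^T\!\!\int_\Om \Pth{\vc{u}_h}\,(\Aoph{\phi}-\Aop{\phi})\,dxdt\right|\le \norm{\Pth{\vc{u}_h}}_{L^2(0,T;\vc{L}^2(\Om))}\,Ch\,\norm{\phi}_{L^2(0,T;L^2(\Om))}\le Ch^{1/2}\norm{\phi}_{L^2(0,T;L^2(\Om))}.
\end{equation*}
Adding the two contributions gives the claimed bound $C(h^{1/2}+h)\norm{\phi}_{L^2(0,T;L^2(\Om))}$.

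The only genuinely delicate point — and what I expect to be the main obstacle — is the time-derivative term: one cannot afford to lose more than $h^{-1/2}$ there, so the interpolation estimate must be used at full first order in $h$ (not merely $h^{1/2}$), which is why the $W^{1,2}$-regularity of $\gradlaplaceinv{\phi}$, rather than just its $\vc{L}^2$ bound, is essential. One should also be slightly careful that the piecewise-constant-in-time reduction is legitimate: since $\Pth{\vc{u}_h}$ and $\Curl\vc{w}_h$ are constant on each $(t^{m-1},t^m]$, testing against $\phi$ is the same as testing against $\phi^m$ on that interval, so replacing $\Aop{\phi}$ by $\Aop{\phi^m}$ inside the integral is exact, and all the estimates above go through verbatim. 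Everything else is routine: Cauchy–Schwarz, the a priori bounds of Lemma~\ref{lemma:stability}, the inverse-in-time estimate \eqref{eq:timediv}, and Lemma~\ref{lemma:interpolation}.
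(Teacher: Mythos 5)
Your proposal is correct and follows essentially the same route as the paper: Cauchy--Schwarz, the first-order interpolation estimate of Lemma~\ref{lemma:interpolation} combined with elliptic regularity to get $\norm{\Aoph{\phi}-\Aop{\phi}}_{L^2(0,T;\vc{L}^2(\Om))}\le Ch\norm{\phi}_{L^2(0,T;L^2(\Om))}$, the stability bound on $\Curl\vc{w}_h$, and the $h^{-1/2}$ bound \eqref{eq:timediv} on $\Pth{\vc{u}_h}$. Your additional remarks on the $W^{2,2}$ regularity of $\Delta^{-1}$ and the piecewise-constant-in-time reduction are points the paper leaves implicit.
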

\begin{proof}

By this, the H\"older inequality, and  Lemma \ref{lemma:interpolation}, 
we deduce
\begin{equation*}
\begin{split}
&\left|\int_{0}^{T}\int_{\Om}\left(\Pth{\vc{u}_{h}} + \mu \Curl \vc{w}_{h}\right)  (\Aoph{\phi} - \Aop{\phi})\ dxdt\right| \\
&\qquad \leq ch\|\Grad\Aop{\phi}\|_{L^2(0,T;\vc{L}^2(\Omega))} \\
&\qquad \qquad \times\left(\|\Pth{\vc{u}_{h}}\|_{L^2(0,T;\vc{L}^2(\Omega))} + \|\Curl \vc{w}_{h}\|_{L^2(0,T;\vc{L}^2(\Omega))} \right)\\
&\leq C(h^\frac{1}{2} + h)\|\phi\|_{L^2(0,T;L^2(\Omega))},
\end{split}
\end{equation*}
where we in the last inequality have used Lemma \ref{lemma:stability} and \eqref{eq:timediv}.
\end{proof}

We are now in a position to prove higher integrability of the density. 
To increase readability of the proof, we introduce the notation
$$
\avg{\phi} = \frac{1}{|\Omega|}\int_{\Omega}\phi \ dx,
$$
for the spatial average value of a function.

\begin{lemma}[Higher integrability on the density] \label{lemma:higherorderpressure} \solutiontext
Then
\begin{equation*}
\varrho_{h} \inb L^{\gamma + 1}((0,T) \times \Omega).
\end{equation*}
\end{lemma}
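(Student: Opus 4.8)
The plan is to test the effective viscous flux identity \eqref{eq:effvisc} with a cleverly chosen $\phi$ that recovers a power of the density, turning the left-hand side into the quantity $\int\int p(\vr_h)\vr_h^{\gamma-\epsilon}$-type term we want to bound, while the right-hand side is controlled by the a priori estimates already established. Concretely, I would set $\phi = \vr_h^\beta - \avg{\vr_h^\beta}$ for a suitable exponent $\beta>0$ (the natural choice being $\beta$ small enough that $\vr_h^\beta \in L^2(0,T;L^2_0(\Om))$ uniformly in $h$, which by Lemma~\ref{lemma:stability} holds as soon as $2\beta \le \gamma$, so $\beta = \gamma/2$ works), substitute into \eqref{eq:effvisc}, and read off
\begin{equation*}
	\int_0^T\int_\Om p(\vr_h)\vr_h^\beta\,dxdt
	= (\lambda+\mu)\int_0^T\int_\Om \Div\vu_h\,\vr_h^\beta\,dxdt
	+ \avg{\cdot}\text{-terms}
	+ \int_0^T\int_\Om \Pth{\vu_h}\Aop{\vr_h^\beta - \avg{\vr_h^\beta}}\,dxdt
	+ R_h,
\end{equation*}
where $R_h$ is the last term of \eqref{eq:effvisc}, bounded by $C(h^{1/2}+h)\|\vr_h^\beta\|_{L^2(0,T;L^2(\Om))} \to 0$ via Lemma~\ref{lemma:interpolationerrorbound}.

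The next step is to estimate each term on the right. The term $(\lambda+\mu)\int\int \Div\vu_h\,\vr_h^\beta$ is handled by Cauchy--Schwarz: it is bounded by $\|\Div\vu_h\|_{L^2((0,T)\times\Om)}\|\vr_h^\beta\|_{L^2((0,T)\times\Om)}$, and the first factor is controlled by Lemma~\ref{lemma:stability} while the second is $\|\vr_h\|_{L^{2\beta}}^\beta$, again controlled by Lemma~\ref{lemma:stability} provided $2\beta\le\gamma$. The average terms $\avg{\vr_h^\beta}\int\int(\text{stuff})$ are lower-order and absorbed similarly using conservation of mass and the $L^\infty(0,T;L^\gamma)$ bound. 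For the operator term $\int\int \Pth{\vu_h}\Aop{\vr_h^\beta-\avg{\vr_h^\beta}}$, I would integrate the discrete time derivative by parts in time (discrete summation by parts), producing boundary-in-time contributions at $t=0$ and $t=T$ plus $\sum_m \Delta t \int \vu_h^m\,\Pth{\Aop{\vr_h^\beta - \avg{\vr_h^\beta}}^m}$; here one uses the continuity scheme to rewrite $\Pth{\vr_h^\beta}$ (via the renormalized scheme \eqref{FEM:renormalized} with $B(z)=z^\beta$) in terms of flux terms and the divergence, so that $\Pth{\Aop{\cdots}}$ becomes $\Aop{\Pth{\vr_h^\beta}}$ which is bounded in a negative-order space. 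The boundary-in-time terms are controlled by $\|\vu_h\|_{L^\infty(0,T;\vc{L}^2)}$ (Lemma~\ref{lemma:stability}) times $\|\Aop{\vr_h^\beta}\|_{\vc{L}^2} \lesssim \|\vr_h^\beta\|_{L^{6/5}}$ or similar, using the smoothing of $\Grad\Delta^{-1}$. Since $p(\vr_h)\vr_h^\beta = a\vr_h^{\gamma+\beta}$ and $\beta = \gamma/2$ would give only $\gamma + \gamma/2$, I would actually want $\beta$ such that $\gamma+\beta = \gamma+1$, i.e.\ $\beta = 1$; this is admissible (i.e.\ $\vr_h \in L^2$ uniformly, hence $\vr_h^\beta=\vr_h\in L^2$) only once we know $\gamma\ge 2$ — for $\gamma < 2$ one instead first bootstraps with $\beta = \gamma - 1 < 1$ to get $\vr_h \in L^{2\gamma-1}$, then possibly iterates, or more simply one checks directly that with $\beta$ chosen so that $2\beta \le \gamma$ one still lands on $L^{\gamma+1}$ after optimizing — I would follow the Feireisl--Lions bootstrap: choose $\beta$ as large as the current integrability of $\vr_h$ permits in the $\Div\vu_h\cdot\vr_h^\beta$ Cauchy--Schwarz step, derive improved integrability, and repeat until reaching $\gamma+1$.

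The main obstacle I anticipate is the time-derivative term $\int\int \Pth{\vu_h}\Aop{\vr_h^\beta-\avg{\vr_h^\beta}}$: unlike the stationary case treated in \cite{Karlsen2}, one must commute $\Pth{}$ with the nonlocal operator $\Aop{\cdot}$ and then invoke the renormalized continuity scheme \eqref{FEM:renormalized}, whose extra numerical-diffusion terms (the $B''$ jump terms) must be shown to contribute harmlessly — they are nonnegative and, after summation by parts, paired against bounded quantities, so they are controlled by $\mathcal{N}_{\text{diffusion}}$ from Lemma~\ref{lemma:stability}. A secondary technical point is ensuring $\Aop{\vr_h^\beta - \avg{\vr_h^\beta}}$ and its discrete time difference lie in the right spaces uniformly in $h$; this uses the standard elliptic regularity $\|\Grad\Delta^{-1} g\|_{W^{1,q}}\lesssim \|g\|_{L^q}$ together with Lemma~\ref{lemma:timecont} for the $\Pth{\vr_h}$ bound. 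Once all right-hand terms are bounded by $C(1 + \|\vr_h\|_{L^{\gamma+\beta}}^{\theta})$ with $\theta < \gamma+\beta$ (so the pressure term on the left absorbs it via Young's inequality), the conclusion $\vr_h \inb L^{\gamma+1}((0,T)\times\Om)$ follows.
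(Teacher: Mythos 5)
Your overall strategy -- test the effective viscous flux identity \eqref{eq:effvisc} with a density--based $\phi$, sum by parts in the discrete time derivative, and feed the result back through the continuity scheme -- is exactly the paper's. But your execution contains a misconception that sends you on an unnecessary and partially unjustified detour. You insist that $\phi=\vr_h^{\beta}-\avg{\vr_h^{\beta}}$ be bounded in $L^2(0,T;L^2_0(\Om))$ \emph{uniformly in $h$}, which forces $2\beta\le\gamma$ and then a bootstrap to reach the exponent $\gamma+1$. This is not needed: \eqref{eq:effvisc} is an identity valid for each fixed $h$ (where $\vr_h$ is piecewise constant, hence in every $L^p$), and every occurrence of $\|\vr_h\|_{L^2((0,T)\times\Om)}$ on the right--hand side enters \emph{linearly}, so it is absorbed into the left--hand side $a\|\vr_h\|_{L^{\gamma+1}}^{\gamma+1}$ by H\"older and Young, since $\gamma+1>2$ for $\gamma>\tfrac{N}{2}$. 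The paper therefore takes $\phi=\vr_h-\avg{\vr_h^0}$ (i.e.\ $\beta=1$) in one shot, with no bootstrap and no renormalization: because $B(z)=z$ is linear, the plain continuity scheme \eqref{FEM:contequation} with test function $\Pi_h^Q\Delta^{-1}[\Div\vu_h^{m-1}]$ suffices after the summation by parts, and the upwind error is exactly Lemma \ref{lemma:productionbound}.

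Your fractional--$\beta$ route has genuine holes as stated. First, for $\beta<1$ the function $B(z)=z^{\beta}$ is concave, so your claim that the $B''$ jump terms ``are nonnegative'' is false; one would instead need an analogue of Lemma \ref{lemma:productionbound} with $\jump{\vr_h^{\beta}}$ in place of $\jump{\vr_h}$, which is not established in the paper and does not follow from the stability estimate without further work. Second, you never identify the decisive term that survives the summation by parts, namely $\sum_m\Delta t\int_\Om\vu_h^m\vr_h^m\,\Aop{\Div\vu_h^{m-1}}\,dx$; closing the estimate there requires the embedding $\vu_h\inb L^2(0,T;\vc{L}^{2^*}(\Om))$ of Lemma \ref{lemma:embeddingvelocity} together with $\frac{2\gamma}{\gamma-1}<2^*$, which is precisely where the hypothesis $\gamma>\tfrac{N}{2}$ enters -- a point your sketch glosses over by appealing vaguely to ``a negative-order space'' and Lemma \ref{lemma:timecont}. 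With $\beta=1$, the absorption argument above, and the trilinear bound made explicit, your proof collapses onto the paper's.
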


\begin{proof}
Setting $\phi = \varrho_{h} - \avg{\vr^0_h}$ in \eqref{eq:effvisc} yields the identity
\begin{equation*}
\begin{split}
&\int_{0}^{T}\int_{\Omega} p(\varrho_{h})\varrho_{h}\ dxdt \\
& \qquad = \int_{0}^T \int_{\Omega} p(\varrho_{h})\avg{\varrho_{h}^0}  +   (\lambda + \mu) \Div \vc{u}_{h}\varrho_{h} 
 +\Pth{\vc{u}_{h}}\Aop{(\varrho_{h} - \avg{\varrho^{0}_h})}\ dxdt \\
&\qquad \qquad + \int_{0}^T \int_{\Omega}\left(\Pth{\vc{u}_{h}} + \mu \Curl \vc{w}_{h}\right) \\
& \qquad \qquad  \qquad \qquad
\times \left(\Aoph{\varrho_{h} - \avg{\varrho_{h}^0}} - \Aop{\varrho_{h} - \avg{\varrho_{h}^0}}\right)\ dxdt,
\end{split}
\end{equation*}
Applying the H\"older inequality and  Lemmas \ref{lemma:stability} and \ref{lemma:interpolationerrorbound} yields
\begin{equation}
\label{eq:pressure1}
\begin{split}
\left|\int_{0}^{T}\int_{\Omega} p(\varrho_{h})\varrho_{h}\ dxdt \right| 
&\leq \left| \int_{0}^T \int_{\Omega}\Pth{\vc{u}_{h}} \Aop{\varrho_{h} - \avg{\varrho_{h}^0}}\ dxdt \right| \\
& \qquad + C\left(1+ h^\frac{1}{2} + h\right)\|\varrho_{h}\|_{L^2(0,T;L^2(\Omega))}.
\end{split}
\end{equation}
To bound the first term on the right--hand side, we 
first note that
\begin{equation}
\label{eq:pressure2}
\begin{split}
&\int_{0}^T \int_{\Omega}\Pth{\vc{u}_{h}}  \Aop{\varrho_{h} - \avg{\varrho_{h}^0}}~dxdt \\
&\qquad \qquad
= \sum_{m=1}^M\Delta t \int_{\Omega}\frac{\vc{u}_{h}^m - \vc{u}_{h}^{m-1}}{\Delta t}\Aop{\varrho^m_{h} - \avg{\varrho_{h}^0}}~ dx.
\end{split}
\end{equation} 
Then, we apply summation by parts to \eqref{eq:pressure2} and make use of the H\"older inequality to obtain
\begin{equation}
\label{eq:pressure3}
\begin{split}
&\left|\int_{0}^T \int_{\Omega}\Pth{\vc{u}_{h}}  \Aop{\varrho_{h} - \avg{\varrho_{h}^0}}~dxdt\right| \\
&\qquad \qquad = \left|-\sum_{k=1}^{M} \Delta t \int_{\Omega}\vc{u}_{h}^{m-1}\Aop{\Pth{\varrho_{h}^{m}}}\ dx\right. \\
&\qquad \qquad \qquad \qquad \left. 
	-  \frac{1}{\Delta t}\int_{0}^{\Delta t} \int_{\Omega} \vc{u}^{0}_h\Aop{\varrho_{h} - \avg{\varrho_{h}^0}} dxdt\right| \\
&\qquad \qquad \leq   \left|\sum_{m=1}^M \Delta t \int_{\Omega}\vc{u}_{h}^{m-1}\Aop{\Pth{\varrho_{h}^{m}}}\ dx\right| \\
&\qquad \qquad \qquad \qquad + C\|\vc{u}^{0}\|_{\vc{L}^2(\Omega)}\|\varrho_{h} - \avg{\varrho_{h}^0}\|_{L^\infty(0,T;L^\gamma(\Omega))},
\end{split}
\end{equation}
where we in the last inequality have used elliptic theory (and $\gamma^* > 2$ since $\gamma > \frac{N}{2}$) to conclude
that
$$
\norm{\Aop{\varrho_{h} - \avg{\varrho_{h}^0}}}_{L^\infty(0,T;\vc{L}^2(\Om))} \leq C\|\varrho_{h} - \avg{\varrho_{h}^0}\|_{L^\infty(0,T;L^\gamma(\Om))}.
$$

Next, using integration by parts, 
\begin{equation}
\label{eq:pressure4}
\begin{split}
&\sum_{m=1}^M \Delta t \int_{\Omega}\vc{u}_{h}^{m-1}\Aop{\Pth{\varrho_{h}^{m}}}\ dx \\
&= \sum_{m=1}^M \Delta t \int_{\Om}  \Delta^{-1}\left[\Div \vc{u}_{h}^{m-1}\right]\left(\frac{\varrho_{h}^m - \varrho_{h}^{m-1}}{\Delta t}\right) ~dx \\
&= \sum_{m=1}^M \Delta t \int_{\Om} \vc{u}_{h}^m \varrho_{h}^m \Aop{\Div \vc{u}^{m-1}_{h}} ~dx \\
&\qquad \quad + \sum_{m=1}^M \Delta t \sum_{E \in E_{h}}\int_{\partial E}
				\jump{\varrho_{h}^m}_{\partial E}(\vc{u}^m_{h} \cdot \nu)^-(\Pi_{h}^V - \mathbb{I})\Delta^{-1}\left[\Div \vc{u}^{m-1}_{h}\right]~dS(x),
\end{split}
\end{equation}
where the last equality is deduced as follows: Set $\phi^m_{h} = \Pi_{h}^Q \Delta^{-1}\left[\Div \vc{u}_{h}^{m-1}\right]$ in the continuity scheme \eqref{FEM:contequation}  
and  perform the calculation \eqref{eq:densitycalc}.

By setting \eqref{eq:pressure4} into \eqref{eq:pressure3},
and applying Lemma \ref{lemma:productionbound}, we obtain
\begin{equation}\label{eq:this}
\begin{split}
&\left|\int_{0}^T \int_{\Omega}\Pth{\vc{u}_{h}}  \Aop{\varrho_{h} - \avg{\varrho_{h}^0}}\ dxdt\right| \\
&\qquad\leq  C\left(1+\|\vc{u}_{h}\|_{L^{2}(0,T;\vc{L}^{\frac{2\gamma}{\gamma-1}}(\Omega))}\|\varrho_{h}\|_{L^{\infty}(0,T;L^{\gamma}(\Omega))}\right)\\
&\qquad\qquad +h^{\theta(\gamma)}C\|\Aop{\Div \vc{u}_{h}}\|_{L^2(0,T;\vc{L}^{2^*}(\Omega))},
\end{split}
\end{equation}
where $\theta(\gamma)$ is given by \eqref{eq:thetaeq}.

Finally, inserting \eqref{eq:this} into \eqref{eq:pressure1} and recalling that $\frac{2\gamma}{\gamma - 1} < 2^*$, since $\gamma > \frac{N}{2}$,
gives
\begin{equation*}
\begin{split}
&\left|\int_{0}^{T}\int_{\Omega} a\varrho_{h}^{\gamma+1}\ dxdt \right| \\
&\leq C\left(1+\|\vc{u}_{h}\|_{L^{2}(0,T;\vc{L}^{2^*}(\Omega))}\|\varrho_{h}\|_{L^{\infty}(0,T;L^{\gamma}(\Omega))}\right)
 +h^{\theta(\gamma)}C\|\Div \vc{u}_{h}\|_{L^2(0,T;L^{2}(\Omega))} \\
&\qquad + \left(1+ h^\frac{1}{2} + h\right)\|\varrho_{h}\|_{L^2(0,T;L^2(\Omega))}.
\end{split}
\end{equation*}
The proof then follows from  the H\"older and  Cauchy (with epsilon) inequalities.
\end{proof}

\section{Convergence}\label{sec:conv}
\solutiontext
In this section we establish that a subsequence 
of $\{\left(\vrho_{h}, \vc{w}_{h}, \vc{u}_{h}\right)\}_{h>0}$ 
converges to a weak solution of the semi--stationary Stokes system, thereby proving 
Theorem \ref{theorem:mainconvergence}. The proof is divided into 
several steps: 
\begin{enumerate}
	\item{}Strong convergence of the velocity.
	\item{}Convergence of the continuity scheme.
	\item{}Weak sequential continuity of the discrete viscous flux.
	\item{}Strong convergence of the density.
	\item{}Convergence of the velocity scheme.
\end{enumerate}

Our starting point is that the results of the  previous sections ensure us that the approximate 
solutions $(\vc{w}_{h}, \vc{u}_{h}, \vrho_{h})$ satisfy the following 
$h$--independent bounds:
$$
\vrho_{h} \inb L^\infty(0,T;L^\gamma(\Omega))
\cap L^{\gamma+1}((0,T)\times \Omega),
$$
$$
\vc{w}_{h} \inb L^\infty(0,T;\vc{L}^2(\Om)) \cap L^2(0,T;\Hcurl), 
$$
$$
\vc{u}_{h} \inb L^\infty(0,T;\vc{L}^2(\Om))\cap L^2(0,T;\Hdiv).
$$
Moreover, in view of Lemma \ref{lemma:hodge}, there exists sequences $\{\vc{\zeta}_h\}_{h>0}$, $\{\vc{z}_h\}_{h>0}$
such that
\begin{equation}\label{eq:basic-decomp}
	\begin{split}
		& \vc{u}_{h}(\cdot,t) = \Curl \vc{\zeta}_{h}(\cdot, t) + \vc{z}_{h}(\cdot,t), \\
		& \vc{\zeta}_{h}(\cdot,t) \in \vc{W}_{h}^{0,\perp}(\Om), 
		\quad 
		\vc{z}_{h}(\cdot, t) \in \vc{V}_{h}^{0,\perp}(\Om),		
	\end{split}
\end{equation}
for all $t \in (0,T)$ where
$$
\vc{z}_h \inb L^\infty(0,T;\vc{L}^2(\Om))\cap L^2(0,T;\Hdiv).
$$
$$
\Curl \vc{\zeta}_h \inb L^\infty(0,T; \vc{L}^2(\Om)),
$$
and
$$
\Pth{\Curl \vc{\zeta}_h} \inb L^2(0,T; \vc{L}^2(\Om)).
$$

Consequently, we may assume that there exist functions 
$\vrho,\vc{w},\vc{u}$ such that 
\begin{equation}\label{eq:basic-conv}
	\begin{split}
		& \vrho_{h} \overset{h\to 0}{\weak} \vrho, \quad 
		\text{in $L^\infty(0,T;L^\gamma(\Omega))
		\cap L^{2\gamma}((0,T)\times \Omega)$},\\
		& \vc{w}_{h} \overset{h\to 0}{\weak} \vc{w}, \quad 
		\text{in $L^\infty(0,T;\vc{L}^2(\Om))\cap L^2(0,T;\Hcurl)$}, \\
		& \vc{u}_{h} \overset{h\to 0}{\weak} \vc{u}, \quad 
		\text{in $L^\infty(0,T;\vc{L}^2(\Om))\cap L^2(0,T;\Hdiv)$}.
	\end{split}
\end{equation}
Furthermore, using the standard Hodge decomposition $\vu = \Curl \vc{\zeta} + \Grad s$ and 
orthogonality, 
\begin{equation}\label{eq:basic-conv2}
	\begin{split}
		\vc{z}_h \overset{h\to 0}{\weak} \Grad s, \quad 
		\text{in $L^\infty(0,T;\vc{L}^2(\Om))\cap L^2(0,T;\Hdiv)$}, \\
		\Curl \vc{\zeta}_h \overset{h\to 0}{\weak} \Curl \vc{\zeta}, \quad 
		\text{in $C(0,T;\vc{L}^2(\Om))\cap W^{1,2}(0,T;\vc{L}^2(\Om))$}.
	\end{split}
\end{equation}
In addition,
\begin{equation*}
	\vrho_h^\gamma \overset{h\to 0}{\weak}\overline{\vrho^\gamma}, 
	\quad 
	\vrho_h^{\gamma+1} 
	\overset{h\to 0}{\weak}
	\overline{\vrho^{\gamma+1}}, 
	\quad
	\vrho_h\log\vrho_h \overset{h\to 0}{\weak} \overline{\vrho\log\vrho},
\end{equation*}
where each $\overset{h\to 0}\weak$ signifies weak convergence 
in a suitable $L^p$ space with $p>1$.

Finally, $\vrho_h$, $\vrho_h\log\vrho_h$ converge 
respectively to $\vrho$, $\overline{\vrho\log\vrho}$ 
in $C([0,T];L^p_{\text{weak}}(\Om))$ for some 
$1<p<\gamma$, cf.~Lemma \ref{lem:timecompactness} 
and also \cite{Feireisl:2004oe,Lions:1998ga}. 
In particular, $\vrho$, $\vrho\log \vrho$, and 
$\overline{\vrho\log\vrho}$ belong to $C([0,T];L^p_{\text{weak}}(\Om))$.

\subsection{Strong convergence of the velocity}

\begin{lemma} \label{lemma:velocitycompactness} \solutiontext
Then
$$
	\vc{u}_{h} \rightarrow \vc{u}, \quad \text{in } L^2(0,T; \vc{L}^2(\Omega)).
$$
\end{lemma}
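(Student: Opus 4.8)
The plan is to use the Hodge decomposition $\vc{u}_{h} = \Curl \vc{\zeta}_{h} + \vc{z}_{h}$ from Lemma \ref{lemma:hodge} and establish strong $L^2(0,T;\vc{L}^2(\Om))$ convergence of each of the two pieces separately, exploiting orthogonality so that $\|\vc{u}_h - \vc{u}\|_{L^2(0,T;\vc{L}^2)}^2 = \|\Curl \vc{\zeta}_h - \Curl \vc{\zeta}\|_{L^2(0,T;\vc{L}^2)}^2 + \|\vc{z}_h - \Grad s\|_{L^2(0,T;\vc{L}^2)}^2$ up to a cross term that vanishes once one of the two converges strongly.

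For the curl part $\Curl \vc{\zeta}_h$, I would combine the uniform bound $\Curl\vc{\zeta}_h \inb L^\infty(0,T;\vc{L}^2(\Om))$ with the discrete time-derivative bound $\Pth{\Curl \vc{\zeta}_h}\inb L^2(0,T;\vc{L}^2(\Om))$ from Lemma \ref{lem:lpbound}, together with the fact that $\Curl \vc{\zeta}_h$ is a weakly discretely curl-free field so that spatial translates are controlled via Lemma \ref{lemma:spacetranslation} (applied to $\vc{\zeta}_h \in \vc{W}_h^{0,\perp}$, or more directly using that $\Curl\vc{\zeta}_h$ inherits an $\vc{L}^{2^*}$ bound from Lemma \ref{lemma:embeddingvelocity} and hence has no oscillation pathology). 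The time-derivative bound gives equicontinuity in time in a negative norm, and the uniform $\vc{L}^2$ (or $\vc{L}^{2^*}$) bound gives spatial compactness, so a Aubin--Lions / Kolmogorov-type argument (or Lemma \ref{lem:timecompactness} in a metrizable weak topology upgraded to strong via the translation estimate) yields $\Curl \vc{\zeta}_h \to \Curl \vc{\zeta}$ strongly in $L^2(0,T;\vc{L}^2(\Om))$; the initial-data convergence $\Curl\vc{\zeta}_h^0 \to \Curl\vc{\zeta}^0$ from Lemma \ref{lemma:hodge} identifies the limit.

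For the gradient part $\vc{z}_h$, which does depend on the density and has no good time-derivative estimate, I would invoke the space-translation estimate \eqref{eq:jassa1}: since $\vc{z}_h \in \vc{V}_h^{0,\perp}$ with $\Div\vc{z}_h = \Div\vc{u}_h \inb L^2(\Dom)$, Lemma \ref{lemma:spacetranslation} gives $\|\vc{z}_h(t,\cdot) - \vc{z}_h(t,\cdot-\xi)\|_{L^2(0,T;\vc{L}^2(\Om))} \to 0$ as $|\xi|\to 0$ uniformly in $h$. Combined with the weak time-continuity of $\vc{u}_h$ — more precisely the bound $\Pth{\vc{u}_h}\inb L^2(0,T;W^{-1,1}(\Om))$ from Lemma \ref{lemma:velocitycont}, which transfers to $\vc{z}_h$ since $\Pth{\Curl\vc{\zeta}_h}$ is bounded in $L^2(0,T;\vc{L}^2)\subset L^2(0,T;W^{-1,1})$ — the pair $\vc{z}_h$ satisfies the hypotheses of the Aubin--Lions type Lemma \ref{lemma:aubinlions} (taking $f_h = g_h = \vc{z}_h$ componentwise, or more carefully pairing $\vc{z}_h$ against itself), yielding strong convergence $\vc{z}_h \to \Grad s$ in $L^2(0,T;\vc{L}^2(\Om))$.

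I expect the main obstacle to be the gradient part $\vc{z}_h$: establishing strong compactness there requires carefully checking that the discrete time-derivative estimate for $\vc{u}_h$ indeed localizes to $\vc{z}_h$ (using orthogonality of the Hodge decomposition under the $\Pth{}$ operator, and that the curl-part's time derivative is already strongly controlled), and then verifying the precise integrability exponents needed to apply Lemma \ref{lemma:aubinlions}. Once both pieces converge strongly, adding the squared norms and noting the cross term $\int\int (\Curl\vc{\zeta}_h - \Curl\vc{\zeta})(\vc{z}_h - \Grad s)\,dxdt \to 0$ (one factor converges strongly, the other weakly) completes the proof, and passing to a further subsequence gives $\vc{u}_h \to \vc{u}$ a.e.\ in $\Dom$ as well.
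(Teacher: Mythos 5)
Your treatment of the gradient part $\vc{z}_h$ is essentially the paper's argument and is fine: Lemma \ref{lemma:spacetranslation} gives the uniform spatial translation estimate, Lemma \ref{lemma:velocitycont} gives the discrete time-derivative bound in $L^2(0,T;W^{-1,1}(\Om))$ (which transfers to $\vc{z}_h$ because $\Pth{\Curl \vc{\zeta}_h}$ is separately bounded in $L^2(0,T;\vc{L}^2(\Om))$ by Lemma \ref{lem:lpbound}), and Lemma \ref{lemma:aubinlions} closes that half. The orthogonality and cross-term bookkeeping at the end is also correct.

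The genuine gap is in the curl part. Lemma \ref{lemma:spacetranslation} applies only to fields in $\vc{V}_h^{0,\perp}$ and controls translates by the $L^2$ norm of the divergence; $\Curl \vc{\zeta}_h$ lies in the complementary summand $\Curl \vc{W}_h$ of the decomposition \eqref{eq:Vh-decomp} and is divergence-free, so the lemma simply does not apply to it (taken literally it would bound the translates by zero, which is absurd). Likewise, the $\vc{L}^{2^*}$ bound from Lemma \ref{lemma:embeddingvelocity} gives no spatial compactness at all: bounded sequences in $L^{2^*}$ can oscillate freely and need not be precompact in $L^2$. So for $\Curl\vc{\zeta}_h$ you have equicontinuity in time (Lemma \ref{lem:lpbound}) but no compactness in space, and neither an Aubin--Lions nor a Kolmogorov argument can be run. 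The paper's route (Lemma \ref{lemma:curlcompact}) is structurally different: testing the momentum scheme with $\Curl \vc{\xi}_h$ annihilates the pressure and divergence terms, so $(\vc{w}_h, \Curl\vc{\zeta}_h)$ satisfies a discrete linear parabolic system decoupled from $\vr_h$; one derives the discrete energy identity \eqref{eq:disc-whatwewant} and the continuous one \eqref{eq:whatwewant} for the weak limit, uses the strong convergence of the initial data $\Curl\vc{\zeta}_h^0$ from Lemma \ref{lemma:hodge} to conclude convergence of the norms of $\vc{w}_h$ and $\Curl\vc{\zeta}_h$, and then upgrades weak convergence to strong via norm convergence. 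Some such energy argument is needed here; the compactness machinery you propose cannot supply the strong convergence of $\Curl\vc{\zeta}_h$.
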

\begin{proof}
By virtue of \eqref{eq:basic-decomp} we can consider
each component of the decomposition $\vu_h = \Curl \vc{\zeta}_h + \vc{z}_h$. 
In Lemma \ref{lemma:curlcompact} below we prove that
\begin{equation*}
	\Curl \vc{\zeta}_h \rightarrow \Curl \vc{\zeta}, \quad \text{in $L^2(0,T;\vc{L}^2(\Om))$},
\end{equation*}
and hence it only remains to prove that $\vc{z}_h \rightarrow \vc{z}$ in the sense of 
distributions. 

From Lemma \ref{lemma:velocitycont}, we have the the weak time-continuity estimate:
$$
	\Pth{\vc{z}_{h}} \inb L^{2}(0,T;W^{-1,1}(\Omega)).
$$
Lemma \ref{lemma:spacetranslation} provides the spatial translation estimate:
$$
	\|\vc{z}_{h}(t,x) - \vc{z}_{h}(t, x - \xi)\|_{L^2(0,T;\vc{L}^2(\Omega))} \leq C(|\xi|^2 + |\xi|^\frac{4-N}{2})^\frac{1}{2}\|\Div \vc{z}_{h}\|_{L^2(0,T;L^2(\Omega))},
$$
where the constant $C>0$ is independent of $h$ and $\xi$. 
Lemma \ref{lemma:aubinlions} can then be applied (recalling \eqref{eq:basic-conv2}) to obtain the desired result;
$$
	\vc{z}_{h} \rightarrow \Grad s, \quad \textrm{in }L^2(0,T;\vc{L}^2(\Omega)).
$$
\end{proof}

\begin{lemma}\label{lemma:curlcompact}
	Given \eqref{eq:basic-conv} and \eqref{eq:basic-conv2},
	\begin{equation*}
		\vc{w}_{h}  \overset{h\to 0}{\to} \vc{w}, \quad 
		\Curl\vc{\zeta}_{h} \overset{h\to 0}{\to} \Curl \vc{\zeta} 
		\quad \text{in $L^2(0,T;\vc{L}^2(\Omega))$.}
	\end{equation*}
\end{lemma}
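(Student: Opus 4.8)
The plan is to exploit that the \emph{curl part} of the velocity scheme decouples from the density: inserting a discrete curl as test function kills the pressure and the divergence terms, leaving a linear, density-free discrete parabolic system for $(\vc{\zeta}_h,\vc{w}_h)$, from which strong convergence follows by a norm-convergence (energy) argument. Concretely, taking $\vc{v}_h=\Curl\vc{\eta}_h$ with $\vc{\eta}_h\in\vc{W}_h$ in the first equation of \eqref{FEM:momentumeq} and using $\Div\Curl\vc{\eta}_h=0$ (exactness of the discrete de Rham complex), the pressure and $\Div\vc{u}_h$ terms drop out; writing $\vc{u}_h^m=\Curl\vc{\zeta}_h^m+\vc{z}_h^m$ and using that $\Pth{\vc{z}_h^m}\in\vc{V}_h^{0,\perp}$ is $\vc{L}^2$-orthogonal to $\Curl\vc{W}_h$, one obtains
\begin{equation*}
	\int_\Om \Pth{\Curl\vc{\zeta}_h^m}\cdot\Curl\vc{\eta}_h+\mu\,\Curl\vc{w}_h^m\cdot\Curl\vc{\eta}_h\,\dx=0,\qquad\forall\,\vc{\eta}_h\in\vc{W}_h,
\end{equation*}
while the second equation of \eqref{FEM:momentumeq}, restricted to $\vc{\eta}_h\in\vc{W}_h$ and using the same orthogonality, gives
\begin{equation*}
	\int_\Om \vc{w}_h^m\cdot\vc{\eta}_h\,\dx=\int_\Om \Curl\vc{\zeta}_h^m\cdot\Curl\vc{\eta}_h\,\dx,\qquad\forall\,\vc{\eta}_h\in\vc{W}_h.
\end{equation*}
Since $\vc{\zeta}_h^m,\Pth{\vc{\zeta}_h^m}\in\vc{W}_h^{0,\perp}$, the discrete Poincar\'e inequality \eqref{eq:Poincare2} and Lemma \ref{lem:lpbound} yield $\vc{\zeta}_h\inb L^\infty(0,T;\vc{L}^2(\Om))$ and $\Pth{\vc{\zeta}_h}\inb L^2(0,T;\vc{L}^2(\Om))$; passing to a subsequence and using \eqref{eq:basic-conv}--\eqref{eq:basic-conv2}, the two identities above pass to the limit and give the continuous density-free system $\int_\Om\partial_t\Curl\vc{\zeta}\cdot\Curl\vc{\eta}+\mu\,\Curl\vc{w}\cdot\Curl\vc{\eta}\,\dx=0$ and $\int_\Om\vc{w}\cdot\vc{\eta}\,\dx=\int_\Om\Curl\vc{\zeta}\cdot\Curl\vc{\eta}\,\dx$ for all $\vc{\eta}\in\Hcurl$, with $\Curl\vc{\zeta}|_{t=0}=\Curl\vc{\zeta}^0$ by Lemma \ref{lemma:hodge}.

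The next step is an energy identity. Testing the first discrete identity with $\vc{\eta}_h=\vc{\zeta}_h^m$ and the second with $\vc{\eta}_h=\vc{w}_h^m$, then combining, gives $\mu\int_\Om\abs{\vc{w}_h^m}^2\,\dx=-\int_\Om\Pth{\Curl\vc{\zeta}_h^m}\cdot\Curl\vc{\zeta}_h^m\,\dx$; multiplying by $\Dt$, summing over $m=1,\dots,M$, and using $a\cdot(a-b)=\tfrac12(\abs{a}^2-\abs{b}^2+\abs{a-b}^2)$ yields
\begin{equation*}
	\mu\int_0^T\!\int_\Om\abs{\vc{w}_h}^2\,\dxdt+\tfrac12\norm{\Curl\vc{\zeta}_h^M}_{\vc{L}^2(\Om)}^2+\tfrac{\Dt}{2}\norm{\Pth{\Curl\vc{\zeta}_h}}_{L^2(0,T;\vc{L}^2(\Om))}^2=\tfrac12\norm{\Curl\vc{\zeta}_h^0}_{\vc{L}^2(\Om)}^2,
\end{equation*}
where the third term is $O(\Dt)\to0$ by Lemma \ref{lem:lpbound} and the right-hand side converges to $\tfrac12\norm{\Curl\vc{\zeta}^0}^2_{\vc{L}^2(\Om)}$ by Lemma \ref{lemma:hodge}.

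Now I would pass to the limit. Testing the continuous system with $\vc{\eta}=\vc{\zeta}$ (legitimate since $\partial_t\Curl\vc{\zeta}\in L^2(0,T;\vc{L}^2(\Om))$ by \eqref{eq:basic-conv2}) and with $\vc{\eta}=\vc{w}$ gives the continuous counterpart $\mu\int_0^T\!\int_\Om\abs{\vc{w}}^2\,\dxdt+\tfrac12\norm{\Curl\vc{\zeta}(T)}_{\vc{L}^2(\Om)}^2=\tfrac12\norm{\Curl\vc{\zeta}^0}_{\vc{L}^2(\Om)}^2$. Since $\vc{w}_h\weak\vc{w}$ in $L^2(\Dom)$ and $\Curl\vc{\zeta}_h^M\weak\Curl\vc{\zeta}(T)$ in $\vc{L}^2(\Om)$ (from \eqref{eq:basic-conv2}), and the sum of the first two (nonnegative) terms of the discrete identity converges to the sum of their weak limits, each of the two must converge separately; in particular $\norm{\vc{w}_h}_{L^2(\Dom)}\to\norm{\vc{w}}_{L^2(\Dom)}$, hence $\vc{w}_h\to\vc{w}$ in $L^2(0,T;\vc{L}^2(\Om))$. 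For $\Curl\vc{\zeta}_h$, testing the second discrete identity with $\vc{\eta}_h=\vc{\zeta}_h^m$ and summing gives $\int_0^T\!\int_\Om\abs{\Curl\vc{\zeta}_h}^2\,\dxdt=\int_0^T\!\int_\Om\vc{w}_h\cdot\vc{\zeta}_h\,\dxdt$, whose right-hand side is a strong-times-weak product converging to $\int_0^T\!\int_\Om\vc{w}\cdot\vc{\zeta}\,\dxdt=\int_0^T\!\int_\Om\abs{\Curl\vc{\zeta}}^2\,\dxdt$; combined with $\Curl\vc{\zeta}_h\weak\Curl\vc{\zeta}$ this gives $\Curl\vc{\zeta}_h\to\Curl\vc{\zeta}$ in $L^2(0,T;\vc{L}^2(\Om))$.

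The main obstacle is the limit passage in the reduction step together with the endpoint weak continuity $\Curl\vc{\zeta}_h^M\weak\Curl\vc{\zeta}(T)$ used above: one must verify that the weak limit $(\vc{\zeta},\vc{w})$ solves the continuous density-free system with the correct initial datum, and that the piecewise-constant-in-time functions enjoy the stated weak continuity at $t=T$ — both resting on the time-regularity bound of Lemma \ref{lem:lpbound} (via the Arzel\`a--Ascoli argument behind Lemma \ref{lem:timecompactness}) and on the strong convergence of the initial data in Lemma \ref{lemma:hodge}. Some care is also needed with the summation-by-parts manipulations, since the discrete unknowns are piecewise constant in time.
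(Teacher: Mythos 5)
Your proposal is correct and follows essentially the same route as the paper: decouple the curl part of the scheme by testing with discrete curls (so the pressure and divergence terms vanish), pass to the limit in the resulting density-free system, and compare the discrete and continuous energy identities to upgrade the weak convergences of $\vc{w}_h$ and $\Curl\vc{\zeta}_h$ to strong ones. The only cosmetic difference is your final step for $\Curl\vc{\zeta}_h$ (a strong-times-weak product $\int\vc{w}_h\cdot\vc{\zeta}_h$), whereas the paper subtracts the two energy identities at each fixed $t$; both are valid given the endpoint weak continuity supplied by Lemma \ref{lem:lpbound} and the strong convergence of $\Curl\vc{\zeta}_h^0$ from Lemma \ref{lemma:hodge}.
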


\begin{proof}
	Fix any $t \in (0,T)$ and $m$ such that $t \in (t^{m-1}, t^m]$,
	where $t^m= m\Delta t $.
	Subtract the first equation of \eqref{FEM:momentumeq} 
	with $\vc{v}_h = \Curl \vc{\xi}_h^m$ from $\mu$ times the 
	second equation of \eqref{FEM:momentumeq}. 
	Multiplying the result with $\Delta t$ and summing 
	over all $k=1,\ldots,m$ yields 
	\begin{equation}\label{eq:curlconv1}
		\begin{split}
			&\int_0^t \int_\Om \mu \Curl \vc{\eta}_h \Curl \vc{\zeta}_h 
			- \mu \Curl \vc{w}_h \Curl \vc{\xi}_h \ dxdt \\
			&\qquad = \int_0^t\int_\Om \mu \vc{w}_h\vc{\eta}_h
			+ \Pth{\Curl \vc{\zeta}_h} \Curl \vc{\xi}_h \ dxdt,
		\end{split}
	\end{equation}
	for all $\vc{\eta}_h, \vc{\xi}_h$ that are piecewise constant 
	in time with values in $\vc{W}_h(\Om)$. 
	Fixing $\vc{\eta},\vc{\xi} \in C_c^\infty((0,T)\times \Om)$, we use 
	in \eqref{eq:curlconv1} the test functions
	\begin{align*}
		&\vc{\eta}_h(t,\cdot) =\vc{\eta}_h^m(\cdot) 
		:=\frac{1}{\Delta t}\int_{t^{m-1}}^{t^m} 
		\Pi_h^W \vc{\eta}(\cdot, s) \ ds,
		\quad \text{$t\in (t_{m-1},t_m)$, $m=1,\ldots,M$.}
		\\ &  \vc{\xi}_h(t,\cdot) = \vc{\xi}_h^m(\cdot) 
		:=\frac{1}{\Delta t}\int_{t^{m-1}}^{t^m} 
		\Pi_h^W \vc{\xi}(\cdot, s) \ ds,
		\quad \text{$t\in (t_{m-1},t_m)$, $m=1,\ldots,M$.} 
	\end{align*}
	Due to Lemma \ref{lemma:interpolation}, 
	$\Curl \vc{\xi}_h \to \Curl \vc{\xi}$ and 
	$\Curl \vc{\eta}_h \to \Curl \vc{\eta}$ in $L^2(0,T;\vc{L}^2(\Omega))$. 
	As a consequence, keeping in mind \eqref{eq:basic-conv} and \eqref{eq:basic-conv2}, 
	we let $h\to 0$ in \eqref{eq:curlconv1} to obtain
	\begin{equation}\label{eq:curlconv2}
		\begin{split}
			&\int_0^t \int_\Om \mu \Curl \vc{\eta} \Curl \vc{\zeta}
			-\mu \Curl \vc{w} \Curl \vc{\xi} \ dxdt \\
			&\qquad = \int_0^t\int_\Om \mu \vc{w}\vc{\eta}
			+\partial_t(\Curl \vc{\zeta}) \Curl \vc{\xi} \ dxdt,
			\quad \forall \vc{\eta},\vc{\xi} \in \vc{C}_c^\infty((0,T)\times \Om).
		\end{split}
	\end{equation}

	Since $\vc{C}_c^\infty((0,T)\times \Om)$
	is dense in $L^2(0,T;\vc{W}^{\Curl, 2}_0(\Om))$ (\cite{Girault:1986fu}), we 
	see that \eqref{eq:curlconv2} holds for all 
	$\vc{\eta},\vc{\xi} \in L^2(0,T;\vc{W}^{\Curl, 2}_0(\Om))$.
	Hence, taking $\vc{\eta} = \vc{w}$, $\vc{\xi}=\vc{\zeta}$ 
	in \eqref{eq:curlconv2},
	\begin{equation}\label{eq:whatwewant}
		\frac{1}{2}\int_\Om |\Curl \vc{\zeta}^0|^2~dx = \int_0^t \int_\Om \mu \abs{\vc{w}}^2~dxdt
		+ \frac{1}{2}\left(\int_\Om |\Curl \vc{\zeta}|^2~dx\right)(t),
	\end{equation}
where $\Curl \vc{\zeta}^0$ is given by the Hodge decomposition $\vu^0 = \Curl \vc{\zeta}^0 + \Grad s^0$.

	Next, setting $\vc{\eta}_h = \vc{w}_h$ and 
	$\vc{\xi}_h = \vc{\zeta}_h$ 
	in \eqref{eq:curlconv1}, we observe that
	\begin{equation}\label{eq:disc-whatwewant}
		\begin{split}
			\frac{1}{2}\int_\Om |\Curl \vc{\zeta}_h^0|^2~dx 
			&= \int_0^t\int_\Om \mu \abs{\vc{w}_h}^2~dxdt \\
			&\qquad +\frac{1}{2}\left(\int_\Om |\Curl \vc{\zeta}_h|^2~dx\right)(t) + \frac{1}{2}\sum_{m=1}^M\int_\Om \jump{\Curl \vc{\zeta}_h^m}^2~dx.
		\end{split}
	\end{equation}
	
	Subtracting \eqref{eq:whatwewant} from \eqref{eq:disc-whatwewant} 
	\begin{equation*}
		\begin{split}
			&\lim_{h \rightarrow 0}\left[
			\int_0^t\int_\Om \mu (\abs{\vc{w}_h}^2- |\vc{w}|^2)~dxdt 
			+ \frac{1}{2}\left(\int_\Om|\Curl \vc{\zeta}_h|^2 - |\Curl \vc{\zeta}|^2 ~dx \right)(t)
			\right]\\
			&\qquad \leq \lim_{h \rightarrow 0}\left[
				\frac{1}{2}\int_\Om |\Curl \vc{\zeta}_h^0|^2 - |\Curl \vc{\zeta}^0|^2~dx 
				\right] = 0,
		\end{split}
	\end{equation*}
	where the last equality is an application of Lemma \ref{lemma:hodge}.
	Consequently, for any $t \in (0,T)$,
	\begin{equation*}
		\begin{split}
			&\lim_{h \rightarrow 0}\left[
			\mu\|\vc{w}_h - \vc{w}\|_{L^2(0,t;\vc{L}^2(\Om))}^2 + \frac{1}{2}\|\Curl \vc{\zeta}_h(t, \cdot) - \Curl \vc{\zeta}(t, \cdot)\|_{\vc{L}^2(\Om)}^2
			\right] \\
			&\qquad = 
			\lim_{h \rightarrow 0} \left[\int_0^t\int_\Om \mu\left( |\vc{w}|^2 - \vc{w}_h \vc{w}\right)~dxdt\right]  \\
			&\qquad \qquad + \lim_{h\rightarrow 0}\left(\int_\Om |\Curl \vc{\zeta}|^2 - (\Curl \vc{\zeta}_h)(\Curl \vc{\zeta})~dx\right)(t)
			 = 0,
		\end{split}
	\end{equation*}
	where the last term converges to zero due to the weak convergences \eqref{eq:basic-conv2}.	
\end{proof}

In the subsequent analysis, we will need the following technical lemma. 
For notational convenience, we define the linear time interpolant $\Pi_{\mathcal{L}}$:
\begin{equation}\label{eq:timeint}
\left(\Pi_{\mathcal{L}}f\right)(t) = f^{m-1} + \frac{t - t^{m-1}}{\Delta t}(f^{m} - f^{m-1}), \quad t \in (t^{m-1}, t^m).
\end{equation}

\begin{lemma}\label{lemma:samelimit}
Given \eqref{eq:basic-conv} and \eqref{eq:basic-conv2},
\begin{align*}
\Delta^{-1}[\Div \vc{u}_{h}] &\overset{h \rightarrow 0}{\rightarrow } \Delta^{-1}[\Div \vc{u}], \quad \textrm{in } L^2(0,T;\vc{W}^{1,2}(\Omega)), \\
\Delta^{-1}[\Div \Pi_{\mathcal{L}}\vc{u}_{h}] &\overset{h \rightarrow 0}{\rightarrow } \Delta^{-1}[\Div \vc{u}], \quad \textrm{in } L^2(0,T;\vc{W}^{1,2}(\Omega)).
\end{align*}
\end{lemma}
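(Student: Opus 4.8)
The plan is to reduce both statements to the strong convergence $\vc{u}_{h}\to\vc{u}$ in $L^{2}(0,T;\vc{L}^{2}(\Om))$ already obtained in Lemma \ref{lemma:velocitycompactness}, using that $\Grad\Delta^{-1}\Div$ acts as a contraction on $\vc{L}^{2}$-vector fields with vanishing normal trace (here $\Delta^{-1}$ denotes the mean-zero solution operator of the homogeneous Neumann problem). For the first convergence I would fix $t$ and set $\psi_{h}=\Delta^{-1}[\Div\vc{u}_{h}(t,\cdot)]$, $\psi=\Delta^{-1}[\Div\vc{u}(t,\cdot)]$; since $\vc{u}_{h}(t,\cdot)\in\vc{V}_{h}(\Om)\subset\Hdiv$ and $\vc{u}(t,\cdot)\in\mcw(\Om)\subset\Hdiv$, subtracting the two Neumann weak formulations and using that $\vc{u}_{h}-\vc{u}$ has vanishing normal trace and $\psi_{h}-\psi$ vanishing normal derivative on $\pOm$ gives
$$\int_{\Om}\Grad(\psi_{h}-\psi)\cdot\Grad\varphi\,\dx=\int_{\Om}(\vc{u}_{h}-\vc{u})(t,\cdot)\cdot\Grad\varphi\,\dx,\qquad\forall\,\varphi\in W^{1,2}(\Om).$$
Taking $\varphi=\psi_{h}-\psi$ yields $\|\Grad(\psi_{h}-\psi)\|_{\vc{L}^{2}(\Om)}\le\|(\vc{u}_{h}-\vc{u})(t,\cdot)\|_{\vc{L}^{2}(\Om)}$, and the Poincar\'e--Wirtinger inequality (legitimate since $\psi_{h}-\psi$ has zero spatial mean) bounds $\|\psi_{h}-\psi\|_{L^{2}(\Om)}$ by the same quantity. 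Squaring, integrating over $(0,T)$, and invoking Lemma \ref{lemma:velocitycompactness} then gives $\Delta^{-1}[\Div\vc{u}_{h}]\to\Delta^{-1}[\Div\vc{u}]$ in $L^{2}(0,T;W^{1,2}(\Om))$; note that no $H^{2}$ or convexity regularity is needed.

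For the second convergence I would observe that $\Pi_{\mathcal{L}}\vc{u}_{h}(t,\cdot)$ is, for each $t$, a convex combination of $\vc{u}_{h}^{m-1},\vc{u}_{h}^{m}\in\vc{V}_{h}(\Om)$ and hence still lies in $\vc{V}_{h}(\Om)\subset\Hdiv$; so the argument of the previous paragraph applies verbatim and reduces the claim to $\Pi_{\mathcal{L}}\vc{u}_{h}\to\vc{u}$ in $L^{2}(0,T;\vc{L}^{2}(\Om))$. By the strong convergence $\vc{u}_{h}\to\vc{u}$ recalled above, it is enough to show $\|\Pi_{\mathcal{L}}\vc{u}_{h}-\vc{u}_{h}\|_{L^{2}(0,T;\vc{L}^{2}(\Om))}\to0$. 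On each slab $(t^{m-1},t^{m})$, \eqref{eq:timeint} together with the extension convention \eqref{eq:num-scheme-II} gives $(\Pi_{\mathcal{L}}\vc{u}_{h}-\vc{u}_{h})(t)=\bigl(1-\tfrac{t-t^{m-1}}{\Delta t}\bigr)(\vc{u}_{h}^{m-1}-\vc{u}_{h}^{m})$, so that $\|\Pi_{\mathcal{L}}\vc{u}_{h}-\vc{u}_{h}\|_{L^{2}(0,T;\vc{L}^{2}(\Om))}^{2}=\tfrac{\Delta t}{3}\sum_{m=1}^{M}\|\jump{\vc{u}_{h}^{m-1}}\|_{\vc{L}^{2}(\Om)}^{2}$. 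Finally, \eqref{eq:timediv} rewrites as $\sum_{m=1}^{M}\|\jump{\vc{u}_{h}^{m-1}}\|_{\vc{L}^{2}(\Om)}^{2}=\Delta t\,\|\Pth{\vc{u}_{h}}\|_{L^{2}(0,T;\vc{L}^{2}(\Om))}^{2}\le C\Delta t\,h^{-1}=O(1)$ (using $\Delta t=ch$), whence $\|\Pi_{\mathcal{L}}\vc{u}_{h}-\vc{u}_{h}\|_{L^{2}(0,T;\vc{L}^{2}(\Om))}^{2}\le C\Delta t=O(h)\to0$.

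The computations are essentially routine. The one point that deserves a moment's care is that the a priori bound \eqref{eq:timediv} on $\Pth{\vc{u}_{h}}$ is only $O(h^{-1/2})$ in $L^{2}(0,T;\vc{L}^{2}(\Om))$ — it degenerates as $h\to0$ — so one must make sure that the extra factor $\Delta t=ch$ produced by the linear-interpolation remainder more than compensates for it, which it does. Apart from that, the proof relies only on the contractivity of $\Grad\Delta^{-1}\Div$ on divergence-type fields and on the already-established strong $L^{2}$-in-space-time convergence of the discrete velocity, so no genuinely new obstacle arises here.
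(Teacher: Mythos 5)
Your proof is correct, and for the first convergence it takes a genuinely different (and more elementary) route than the paper. The paper's proof identifies $\Aoph{\Div \vu_h}$ with the discrete Hodge component $\vc{z}_h$ of $\vu_h$, invokes $\vc{z}_h \to \Grad s$ from Lemma \ref{lemma:velocitycompactness}, and then removes the interpolation operator via the $O(h)\norm{\Div \vu_h}_{L^2}$ estimate of Lemma \ref{lemma:interpolation}; that detour implicitly uses the $W^{2,2}$ elliptic regularity on the convex domain so that $\Pi_h^V$ is defined on $\Aop{\Div \vu_h}$. You instead exploit that $\Grad\Delta^{-1}\Div$ (Neumann, mean-zero) is the $\vc{L}^2$-orthogonal projection onto gradients for fields in $\Hdiv$, hence a contraction, and apply it directly to $\vu_h-\vu$; this bypasses both the discrete Hodge decomposition and the interpolation estimate, and needs only the strong convergence $\vu_h\to\vu$ in $L^2(0,T;\vc{L}^2(\Om))$ --- the same ultimate input as the paper, since $\vc{z}_h\to\Grad s$ is the substance of Lemma \ref{lemma:velocitycompactness}. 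Your weak-formulation identity and the zero-mean/Poincar\'e--Wirtinger step are both sound (note $\Div\vu_h$ and $\Div\vu$ have zero mean because the normal traces vanish). For the second convergence your argument coincides with the paper's: the linear interpolant differs from the piecewise-constant extension by $O((\Delta t)^{1/2})$ in $L^2(0,T;\vc{L}^2(\Om))$ thanks to the bound on $\sum_m\norm{\jump{\vu_h^{m-1}}}_{\vc{L}^2(\Om)}^2$ from Lemma \ref{lemma:stability} (this is the paper's \eqref{eq:jara}, up to your sharper constant $1/3$), and the contraction then transfers this to the potentials exactly as the paper's appeal to elliptic theory does.
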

\begin{proof}

Recall the continuous Hodge decomposition
\begin{equation*}
	\vu = \Curl \vc{\zeta} + \Grad s.
\end{equation*}
As in the proof of Lemma \ref{lemma:embedding}, we have that $\vc{z}_h = \Aoph{\Div \vu_h}$. 
Hence, 
\begin{equation*}
	\begin{split}
			&\|\Aoph{\Div \vu_h} - \Aop{\Div \vu}\|_{L^2(0,T;\vc{L}^2(\Om))} \\
& \qquad = \|\vc{z}_h - \Grad s\|_{L^2(0,T;\vc{L}^2(\Om))}.
	\end{split}
\end{equation*}
From Lemma \ref{lemma:velocitycompactness}, we have that the right-hand side converges to 
zero. Hence, we conclude that $\Aoph{\Div \vu_h} \rightarrow \Aop{\Div \vu}$ in $L^2(0,T;\vc{L}^2(\Om))$.
Next, we write
\begin{equation*}
	\begin{split}
			&\|\Aop{\Div \vu_h}- \Aop{\Div \vu}\|_{L^2(0,T;\vc{L}^2(\Om))} \\
			&\qquad \leq \|\Aop{\Div \vu_h}- \Aoph{\Div \vu_h}\|_{L^2(0,T;\vc{L}^2(\Om))} \\
			&\qquad \qquad + \|\Aoph{\Div \vu_h}- \Aop{\Div \vu}\|_{L^2(0,T;\vc{L}^2(\Om))} \\
			&\qquad \leq Ch\|\Div \vu_h\|_{L^2(0,T;L^2(\Om))} \\
			&\qquad \qquad + \|\Aoph{\Div \vu_h}- \Aop{\Div \vu}\|_{L^2(0,T;\vc{L}^2(\Om))}.
	\end{split}
\end{equation*}
By sending $h \rightarrow 0$, we discover 
\begin{equation*}
	\Aop{\Div \vu_h} \rightarrow \Aop{\Div \vu} \quad \text{in }L^2(0,T;\vc{L}^2(\Om)),
\end{equation*}
which proves the first part of the lemma.
	
A direct calculation gives
\begin{equation*}
	\abs{\Pi_{\mathcal{L}}\vu_h(t, \cdot) - \vu_h(t, \cdot)}^2  \leq \abs{\jump{\vu_h^{k-1}(\cdot)}^2}, \quad t \in (t^{k-1}, t^k).
\end{equation*}
Hence, integrating over $(0,T) \times \Om$ yields
\begin{equation}\label{eq:jara}
	\norm{\Pi_{\mathcal{L}}\vu_h - \vu_h}_{L^2(0,T;\vc{L}^2(\Om))}^2 \leq \Delta t \sum_{k=1}^{M-1} \norm{\jump{\vu_h^k}}_{\vc{L}^2(\Om)}^2
	\leq C\Delta t,
\end{equation}	
where the last inequality follows from Lemma \ref{lemma:stability}.

Using elliptic theory and \eqref{eq:jara}, we conclude
\begin{equation*}
	\norm{\Delta^{-1}\left[\Pi_{\mathcal{L}}\vu_h - \vu_h \right]}_{L^2(0,T;\vc{W}^{1,2}(\Om))} \leq C(\Delta t)^\frac{1}{2}.
\end{equation*}	
Hence, the limits are equal and consequently the second part of the lemma 
now follows from the first.

\end{proof}

\subsection{Density scheme}

Having established strong convergence of the velocity we now
prove that the numerical solutions converge to 
a weak solution of the continuity equation \eqref{eq:contequation}.

\begin{lemma}[Convergence of the continuity approximation] \label{lemma:densityconv}
	The limit pair $(\vrho,\vc{u})$ constructed in \eqref{eq:basic-conv} 
	is a weak solution of the continuity equation \eqref{eq:contequation} in 
	the sense of  Definition \ref{def:weak}.
\end{lemma}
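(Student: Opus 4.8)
\emph{Proof plan.} It suffices to verify condition (2) of Definition \ref{def:weak}, i.e.\ the weak continuity equation \eqref{eq:weak-rho}. Fix an arbitrary $\phi\in C^\infty([0,T)\times\cOm)$; since $\phi$ vanishes in a neighbourhood of $t=T$ we may and do assume $\phi(t,\cdot)\equiv 0$ for $t$ close to $T$. Set, as in earlier proofs, $\phi^m=\frac{1}{\Dt}\int_{t^{m-1}}^{t^m}\phi\,\dt$ and $\phi_h^m=\Pi_h^Q\phi^m$. Take $\phi_h^m$ as test function in \eqref{FEM:contequation}, multiply by $\Dt$, and sum over $m=1,\dots,M$. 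The right--hand side is treated exactly by the computation \eqref{eq:densitycalc}: it equals $\sum_{m=1}^M\Dt\int_\Om \vrho_h^m\vc{u}_h^m\cdot\Grad\phi^m\,\dx$ plus the upwind remainder $\sum_{m=1}^M\sum_{E\in E_h}\Dt\int_{\binner}\jump{\vrho_h^m}_{\partial E}(\vc{u}_h^m\cdot\nu)^-(\phi_h^m-\phi^m)\,dS(x)$, which by Lemma \ref{lemma:productionbound} is bounded by $C\,h^{\theta(\gamma)}\|\Grad\phi\|_{L^2(0,T;L^{2^*}(\Om))}$ and hence tends to $0$ as $h\to0$.

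Next I would treat the discrete time--derivative term by a summation by parts (Abel summation): $\sum_{m=1}^M\Dt\int_\Om\Pth{\vrho_h^m}\phi^m\,\dx=\sum_{m=1}^M\int_\Om(\vrho_h^m-\vrho_h^{m-1})\phi^m\,\dx$ equals $-\int_\Om\vrho_h^0\phi^1\,\dx-\sum_{m=1}^{M-1}\Dt\int_\Om\vrho_h^m\,\tfrac{\phi^{m+1}-\phi^m}{\Dt}\,\dx$, the boundary contribution at $m=M$ vanishing once $\Dt$ is small enough that $\phi\equiv0$ on $[t^{M-1},T]$. Because $\phi$ is smooth, the piecewise-constant-in-time function equal to $\tfrac{\phi^{m+1}-\phi^m}{\Dt}$ on $(t^{m-1},t^m)$ converges uniformly on $\Dom$ (hence strongly in $L^2(\Dom)$) to $\partial_t\phi$, and $\phi^1\to\phi(0,\cdot)$ uniformly on $\Om$. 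Combining this with $\vrho_h\weak\vrho$ in $L^2(\Dom)$ — legitimate since $\gamma>\frac N2\ge 1$ forces $\vrho_h\inb L^{\gamma+1}(\Dom)\subset L^2(\Dom)$ by Lemma \ref{lemma:higherorderpressure} — and with $\vrho_h^0\to\vrho^0$ in $L^1(\Om)$, this term converges to $-\int_\Om\vrho^0\phi(0,\cdot)\,\dx-\int_0^T\!\int_\Om\vrho\,\partial_t\phi\,\dxdt$.

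It then remains to pass to the limit in the convective term. Writing it as $\int_0^T\!\int_\Om\vrho_h\vc{u}_h\cdot\Grad\phi^m\,\dxdt$ (with $\Grad\phi^m=\frac{1}{\Dt}\int_{t^{m-1}}^{t^m}\Grad\phi\,\dt\to\Grad\phi$ uniformly), the point is to show $\vrho_h\vc{u}_h\weak\vrho\vc{u}$ in, say, $L^1(\Dom)$. I would split $\vrho_h\vc{u}_h-\vrho\vc{u}=\vrho_h(\vc{u}_h-\vc{u})+(\vrho_h-\vrho)\vc{u}$: the first term tends to $0$ in $L^1(\Dom)$ because $\vrho_h$ is bounded in $L^2(\Dom)$ while $\vc{u}_h\to\vc{u}$ in $L^2(0,T;\vc{L}^2(\Om))$ by Lemma \ref{lemma:velocitycompactness}; the second, tested against $\Grad\phi\in L^\infty(\Dom)$, tends to $0$ since $\vrho_h-\vrho\weak0$ in $L^2(\Dom)$ and $\vc{u}\cdot\Grad\phi\in L^2(\Dom)$. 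Hence the convective side converges to $\int_0^T\!\int_\Om\vrho\vc{u}\cdot\Grad\phi\,\dxdt$. Assembling the three limits in the summed identity and letting $h\to0$ produces precisely \eqref{eq:weak-rho}, which proves the lemma.

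The genuinely delicate points are the convective--term limit — passing to the limit in the product of the \emph{merely weakly} convergent density and the velocity, which is why both the strong velocity convergence (Lemma \ref{lemma:velocitycompactness}) and the higher density integrability (Lemma \ref{lemma:higherorderpressure}) are indispensable — and the uniform control of the upwind numerical--diffusion remainder (Lemma \ref{lemma:productionbound}); since all of these are already established, the rest is careful bookkeeping with the discrete time derivative and the initial layer.
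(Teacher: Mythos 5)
Your proposal is correct and follows essentially the same route as the paper's proof: same test function $\phi_h^m=\Pi_h^Q\phi^m$, the same rewriting \eqref{eq:densitycalc} of the upwind flux into the convective term plus a remainder controlled by Lemma \ref{lemma:productionbound}, the same summation by parts for the discrete time derivative, and the same reliance on the strong velocity convergence of Lemma \ref{lemma:velocitycompactness} to pass to the limit in $\vrho_h\vc{u}_h$. The only difference is that you spell out the weak--strong product splitting and the $L^{\gamma+1}\subset L^2$ embedding explicitly, which the paper leaves implicit.
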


\begin{proof}
The proof is essentially identical to the proof of Lemma 6.4 in \cite{Karlsen1}
and is only included for the sake of completeness. 

Fix a test function $\phi \in C_{c}^\infty([0,T)\times\cOm)$, and 
introduce the piecewise constant approximations 
$\phi_{h}:=\Pi_{h}^Q \phi$, $\phi_{h}^m:= \Pi_{h}^Q \phi^m$, and
$\phi^m:=\frac{1}{\Delta t}\int_{t^{m-1}}^{t^m} \phi(t,\cdot)\ dt$. 

Let us employ $\phi^m_{h}$ as test function in 
the continuity scheme \eqref{FEM:contequation} 
and sum over $m=1,\ldots,M$.  The resulting equation reads
\begin{align*}
	&\sum_{m=1}^M \Delta t\int_{\Omega}\Pth{\vr_h^m}\phi_{h}^m\ dxdt \\
	& \quad = \sum_{\Gamma \in \Gamma^I_{h}}\sum_{m=1}^M \Delta t
	\int_\Gamma \left(\vrho^m_-(\vc{u}^{m}_h \cdot \nu)^+ 
	+ \vrho^m_+(\vc{u}^{m}_h \cdot \nu)^-\right)
	\jump{\phi^m_{h}}_\Gamma\ dS(x).
\end{align*}
As in the proof of Lemma \ref{lemma:timecont} we can rewrite this as
\begin{equation}\label{eq:contconvs}
	\begin{split}
		&\sum_{m=1}^M \Delta t\int_{\Omega} \Pth{\vr_h^m}\phi_{h}^m\ dxdt \\
		& = \sum_{m=1}^M \Delta t\int_{\Omega}
		\vrho^m_{h}\vc{u}^m_{h} \Grad \phi^m\ dx
		\\ & \qquad\quad
		+\sum_{E \in E_{h}}\sum_{m=1}^M \Delta t\int_{\binner}
		\jump{\vrho^m_{h}}_{\partial E}(\vc{u}^m_{h} \cdot \nu)^-
		(\phi^m_{h} - \phi^m)\ dS(x)\\
		& = \int_{0}^T \int_{\Omega}\vrho_{h}\vc{u}_{h}\Grad \phi\ dxdt 
		\\ & \qquad\quad
		+ \sum_{E \in E_{h}}\int_{0}^T \int_{\binner}
		\jump{\vrho_{h}}_{\partial E}(\vc{u}_{h}\cdot \nu)^-
		(\phi_{h} - \phi)\ dS(x)dt.
	\end{split}
\end{equation}

Lemma \ref{lemma:productionbound} tells us that
\begin{equation*}
	\abs{\sum_{E \in E_{h}}\int_{0}^T \int_{\binner}
	\jump{\vrho_{h}}_{\partial E}(\vc{u}_{h}\cdot \nu)^-
	(\phi_{h}-\phi)\ dS(x)dt} 
	\leq C\, h^\frac{1}{4}
	\norm{\Grad \phi}_{L^2(0,T;\vc{L}^{2^*}(\Omega))}.
\end{equation*}

In view of Lemma \ref{lemma:velocitycompactness},
$$
\lim_{h \rightarrow 0 }\int_{0}^T \int_{\Omega}\vrho_{h}\vc{u}_{h} \Grad \phi\ dxdt 
= \int_{0}^T \int_{\Omega}\vrho \vc{u} \Grad \phi\ dxdt.
$$

Summation by parts gives
\begin{equation*}
	\begin{split}
		& \sum_{m=1}^M \Delta t\int_{\Omega} \Pth{\vr_h^m}\phi_{h}^m\ dxdt
		\\ & \quad 
		= -\int_{\Delta t}^T \int_{\Omega} \vrho_{h}(t-\Delta t,x)
		\frac{\partial}{\partial t} \left(\Pi_{\mathcal{L}} \phi_{h}\right)\ dxdt 
		- \int_{\Omega}\vrho_{h}^0\phi_{h}^1\ dx
		\\ & \quad 
		\overset{h\to 0}{\to}
		-\int_{0}^T\int_{\Omega}\vrho \phi_{t}\ dxdt 
		- \int_{\Omega}\vrho_{0}\phi(0,x)\ dx.
	\end{split}
\end{equation*}
where \eqref{eq:basic-conv}, together with the strong convergence 
 $\vrho^0_h \overset{h \to 0}{\to} \vrho_0$, was used to pass to the limit. 
Summarizing, letting $h \to0$ in \eqref{eq:contconvs} delivers 
the desired result \eqref{eq:weak-rho}.
\end{proof}

\subsection{Strong convergence of the density approximation}
To obtain strong 
convergence of the density approximations $\vrho_h$, the main ingredient is 
a weak continuity property of the quantity $\eff(\vrho_{h},\vc{u}_{h})$.
To derive this property 
we use \eqref{eq:effvisc} and a
corresponding equation for the weak limit $\overline{\eff(\vr, \vu)}$.
We start by deriving the latter.

Let $\psi \in C^\infty_c(0,T)$ be arbitrary, 
set $\phi = \psi(\varrho - \avg{\vrho^0})$ in \eqref{eq:effvisc}, take the limit $h \rightarrow 0$,
and apply Lemmas \ref{lemma:interpolationerrorbound} and \ref{lemma:higherorderpressure} to obtain
\begin{equation}\label{eq:thebeast}
\begin{split}
&\lim_{h \rightarrow 0}\int_{0}^T \int_{\Omega}\eff(\varrho_{h}, \vc{u}_{h})\psi  (\varrho - \avg{\vrho^0}) \ dxdt \\
&\qquad \qquad \qquad 
= \lim_{h \rightarrow 0}\int_{0}^T \int_{\Omega} \Pth{\vc{u}_{h}}\psi  \Aop{\varrho- \avg{\vrho^0}}\ dxdt.
\end{split}
\end{equation}	
Since the operator $\Delta^{-1}$ is self-adjoint, we can integrate 
by parts to obtain
\begin{equation*}
	\begin{split}
		&\int_{0}^T \int_{\Omega} \Pth{\vc{u}_{h}}\psi  \Aop{\varrho- \avg{\vrho^0}}~ dxdt \\
		&= -\int_0^T\int_\Om \Pth{\Delta^{-1}[\Div \vc{u}_h]}\psi (\vr - \avg{\vr^0})~dxdt \\
		&= -\int_0^T\int_\Om \frac{\partial }{\partial t}\left(\Delta^{-1}[\Div \Pi_\mathcal{L}\vc{u}_h]\right)\psi (\vr - \avg{\vr^0})~dxdt,
	\end{split}
\end{equation*}
where the last equality follows by definition of $\Pi_\mathcal{L}$ \eqref{eq:timeint}.

Next, we move $\psi$ inside the time integration and use that $\avg{\vr^0}$ is independent
 of time. This gives
\begin{equation}\label{forkenneth2}
	\begin{split}
		&\int_{0}^T \int_{\Omega} \Pth{\vc{u}_{h}}\psi  \Aop{\varrho- \avg{\vrho^0}}~ dxdt \\
		&= -\int_0^T\int_\Om \frac{\partial }{\partial t}\left(\psi\Delta^{-1}[\Div \Pi_\mathcal{L}\vc{u}_h]\right)\vr~dxdt \\
		&\qquad + \int_0^T\int_\Om \psi'(t)\left(\Delta^{-1}[\Div \Pi_\mathcal{L}\vc{u}_h]\right)(\vr - \avg{\vr^0})~dxdt.
	\end{split}
\end{equation}

At this point, we recall that $(\vr, \vu)$ is a weak solution to the continuity equation (Lemma \ref{lemma:densityconv}).
Inserting $\phi = \psi \Delta^{-1}[\Div \Pi_\mathcal{L}\vu_h]$ as test function in the weak form of continuity 
equation gives
\begin{equation*}
	\int_0^T\int_\Om \frac{\partial }{\partial t}\left(\psi\Delta^{-1}[\Div \Pi_\mathcal{L}\vc{u}_h]\right)\vr~dxdt
	= - \int_0^T\int_\Om \psi \vr \vu \Aop{\Pi_\mathcal{L}\vu_h}~dxdt
\end{equation*}
Setting this into \eqref{forkenneth2} gives
\begin{equation*}
	\begin{split}
		&\int_{0}^T \int_{\Omega} \Pth{\vc{u}_{h}}\psi  \Aop{\varrho- \avg{\vrho^0}}~ dxdt \\
		&\quad =  \int_{0}^T \int_{\Omega}  \psi \varrho \vc{u} \left(\Aop{\Div \Pi_{\mathcal{L}}\vc{u}_{h}}\right)
		+ \psi'(t) \Delta^{-1}[\Div \Pi_{\mathcal{L}}\vu_h] (\varrho-\avg{\vrho^0})~dxdt.
	\end{split}
\end{equation*}
Sending $h \rightarrow 0$ and applying Lemma \ref{lemma:samelimit} 
\begin{equation*}
\begin{split}
&\lim_{h \rightarrow 0}\int_{0}^t \int_{\Omega} \Pth{\vc{u}_{h}} \psi \Aop{\varrho-\avg{\vrho^0}}\ dxdt \\
&\quad
 =  \int_{0}^t \int_{\Omega}  \psi\varrho \vc{u} \left(\Aop{\Div \vc{u}}\right)
+ \psi'(t) \Delta^{-1}[\Div \vu] (\varrho-\avg{\vrho^0})~dxdt.
\end{split}
\end{equation*}
Finally, we insert this expression in \eqref{eq:thebeast} and obtain
\begin{equation}\label{eq:conteff}
\begin{split}
&\int_{0}^t\int_{\Omega}\overline{\eff(\varrho, \vc{u})} \varrho\psi ~ dxdt \\
&\qquad \quad =  \int_{0}^t \int_{\Omega} \psi\varrho \vc{u} \left(\Aop{\Div \vc{u}}\right)+ \psi'(t) \Delta^{-1}[\Div \vu] (\varrho-\avg{\vrho^0})~dxdt.
\end{split}
\end{equation}

\begin{lemma}[Effective viscous flux] \label{lemma:effectiveflux} 
Given the convergences in \eqref{eq:basic-conv},
\begin{equation*}
\begin{split}
\lim_{h \rightarrow 0 }\int_{0}^T\int_{\Omega}\psi\eff(\varrho_{h}, \vc{u}_{h})\varrho_{h}\ dxdt 
= \int_{0}^T\int_{\Omega}\psi\overline{\eff(\varrho, \vc{u})}\varrho\ dxdt,
\end{split}
\end{equation*}
for all $\psi \in C^1_{c}(0,T)$.
\end{lemma}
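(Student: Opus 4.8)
The statement to be proved is the effective-viscous-flux identity: passing to the limit $h\to 0$ turns $\int\!\!\int \psi\,\eff(\vrho_h,\vc{u}_h)\,\vrho_h$ into $\int\!\!\int \psi\,\overline{\eff(\vrho,\vc{u})}\,\vrho$. The plan is to compare the discrete identity \eqref{eq:effvisc}, tested with $\phi=\psi(\vrho_h-\avg{\vrho_h^0})$, with the continuous identity \eqref{eq:conteff} that was derived just above the lemma. So the first step is to set $\phi=\psi(\vrho_h-\avg{\vrho_h^0})$ in \eqref{eq:effvisc}; by Lemma~\ref{lemma:interpolationerrorbound} the last term of \eqref{eq:effvisc} is $O(h^{1/2}+h)\|\vrho_h-\avg{\vrho_h^0}\|_{L^2(L^2)}$, which vanishes as $h\to 0$ thanks to the higher integrability $\vrho_h\inb L^{\gamma+1}$ (Lemma~\ref{lemma:higherorderpressure}). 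We are thus reduced to showing
\begin{equation*}
\lim_{h\to 0}\int_0^T\!\!\int_\Om \Pth{\vc{u}_h}\,\psi\,\Aop{\vrho_h-\avg{\vrho_h^0}}\ dxdt
= \int_0^T\!\!\int_\Om \psi\,\vrho\,\vc{u}\,\Aop{\Div\vc{u}} + \psi'(t)\,\Delta^{-1}[\Div\vc{u}]\,(\vrho-\avg{\vrho^0})\ dxdt,
\end{equation*}
since the right-hand side is exactly $\int\!\!\int \psi\,\overline{\eff(\vrho,\vc{u})}\,\vrho$ by \eqref{eq:conteff}, and the left-hand side of \eqref{eq:effvisc} converges to $\int\!\!\int\psi\,\overline{\eff(\vrho,\vc{u})}\,\vrho$ by the weak convergences $\eff(\vrho_h,\vc{u}_h)\weak\overline{\eff(\vrho,\vc{u})}$ (from $\vrho_h^\gamma\weak\overline{\vrho^\gamma}$ and $\Div\vc{u}_h\weak\Div\vc{u}$) and $\vrho_h\to\vrho$ in $C([0,T];L^p_{\mathrm{weak}})$, which together let one pass to the limit in the product (one factor converges strongly in the right dual sense).

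Next I would handle the time-derivative term on the left. Since $\Delta^{-1}$ is self-adjoint and $\Aop{\cdot}=\Grad\Delta^{-1}[\cdot]$, integrating by parts in $x$ turns $\Pth{\vc{u}_h}\,\psi\,\Aop{\vrho_h-\avg{\vrho_h^0}}$ into $-\Pth{\Delta^{-1}[\Div\vc{u}_h]}\,\psi\,(\vrho_h-\avg{\vrho_h^0})$, and the discrete time difference $\Pth{\Delta^{-1}[\Div\vc{u}_h]}$ equals $\tfrac{\partial}{\partial t}\Delta^{-1}[\Div\Pi_{\mathcal{L}}\vc{u}_h]$ by the definition \eqref{eq:timeint} of the linear interpolant. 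Moving $\psi$ inside the time derivative and using that $\avg{\vrho_h^0}$ is $t$-independent yields, exactly as in \eqref{forkenneth2} but with $\vrho_h$ in place of $\vrho$,
\begin{equation*}
\int_0^T\!\!\int_\Om \Pth{\vc{u}_h}\,\psi\,\Aop{\vrho_h-\avg{\vrho_h^0}}\ dxdt
= -\int_0^T\!\!\int_\Om \tfrac{\partial}{\partial t}\!\left(\psi\,\Delta^{-1}[\Div\Pi_{\mathcal{L}}\vc{u}_h]\right)\vrho_h\ dxdt
+ \int_0^T\!\!\int_\Om \psi'(t)\,\Delta^{-1}[\Div\Pi_{\mathcal{L}}\vc{u}_h]\,(\vrho_h-\avg{\vrho_h^0})\ dxdt.
\end{equation*}
The second term converges to $\int\!\!\int\psi'\,\Delta^{-1}[\Div\vc{u}]\,(\vrho-\avg{\vrho^0})$ by Lemma~\ref{lemma:samelimit} (strong convergence of $\Delta^{-1}[\Div\Pi_{\mathcal{L}}\vc{u}_h]$ in $L^2(W^{1,2})$, hence in $L^2(L^2)$ by compact embedding, or directly), the $L^\infty(L^\gamma)$ weak-$\star$ convergence of $\vrho_h$, and $\vrho_h^0\to\vrho^0$ in $L^1$. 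For the first term, I would use the discrete continuity scheme \eqref{FEM:contequation} tested with $\phi_h^m=\Pi_h^Q(\psi\,\Delta^{-1}[\Div\vc{u}_h])^m$, running through the computation \eqref{eq:densitycalc}; this is the discrete analogue of inserting $\phi=\psi\Delta^{-1}[\Div\Pi_{\mathcal{L}}\vc{u}_h]$ into the weak continuity equation. That produces $-\int\!\!\int\psi\,\vrho_h\,\vc{u}_h\,\Aop{\Div\Pi_{\mathcal{L}}\vc{u}_h}$ plus an upwind-jump remainder $\sum_E\int\!\!\int_{\binner}\jump{\vrho_h}(\vc{u}_h\cdot\nu)^-(\Pi_h^Q-\mathbb{I})(\psi\Delta^{-1}[\Div\vc{u}_h])\,dS$, which by Lemma~\ref{lemma:productionbound} is $O(h^{\theta(\gamma)})\|\Grad(\psi\Delta^{-1}[\Div\vc{u}_h])\|_{L^2(L^{2^*})}$ and therefore vanishes (the gradient factor is bounded using elliptic regularity and $\Div\vc{u}_h\inb L^2(L^2)$, together with Lemma~\ref{lemma:embeddingvelocity}-type bounds).

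The last step is to pass to the limit in $-\int\!\!\int\psi\,\vrho_h\,\vc{u}_h\,\Aop{\Div\Pi_{\mathcal{L}}\vc{u}_h}$: here $\vc{u}_h\to\vc{u}$ strongly in $L^2(L^2)$ by Lemma~\ref{lemma:velocitycompactness}, $\Aop{\Div\Pi_{\mathcal{L}}\vc{u}_h}\to\Aop{\Div\vc{u}}$ strongly in $L^2(L^2)$ by Lemma~\ref{lemma:samelimit}, and $\vrho_h\weak\vrho$ in $L^{\gamma+1}$ (or weak-$\star$ in $L^\infty(L^\gamma)$) — so the product of one weakly convergent and two strongly convergent factors converges, giving $-\int\!\!\int\psi\,\vrho\,\vc{u}\,\Aop{\Div\vc{u}}$. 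Collecting everything reproduces the right-hand side of \eqref{eq:conteff}, i.e. $\int\!\!\int\psi\,\overline{\eff(\vrho,\vc{u})}\,\vrho$, which completes the proof. I expect the main obstacle to be the bookkeeping in the discrete summation-by-parts and the $\Pi_h^Q$-interpolation remainder for the time-derivative term — making sure that replacing $\vc{u}_h$ by the linear interpolant $\Pi_{\mathcal{L}}\vc{u}_h$ inside $\Delta^{-1}[\Div\cdot]$ is exact at the discrete level, and that the jump remainder is controlled uniformly in $h$ by Lemma~\ref{lemma:productionbound} with a test function ($\psi\Delta^{-1}[\Div\vc{u}_h]$) that itself depends on $h$; all the limits themselves are then routine given the compactness results already established.
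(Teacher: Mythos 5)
Your overall architecture is the same as the paper's: test \eqref{eq:effvisc} with $\phi=\psi(\vrho_h-\avg{\vrho_h^0})$, kill the interpolation error with Lemma \ref{lemma:interpolationerrorbound}, rewrite the time-derivative term through summation by parts and the discrete continuity scheme, control the upwind jumps with Lemma \ref{lemma:productionbound}, and match the result against \eqref{eq:conteff}. There is, however, a genuine gap at the single most delicate analytic step: the limit of the trilinear term $\int_0^T\!\!\int_\Om\psi\,\vrho_h\,\vu_h\cdot\Aop{\Div \Pi_{\mathcal{L}}\vu_h}\,dxdt$. You argue that this converges because it is ``the product of one weakly convergent and two strongly convergent factors''. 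That principle fails here: strong convergence of $\vu_h$ and of $\Aop{\Div\Pi_{\mathcal{L}}\vu_h}$ in $L^2(0,T;\vc{L}^2(\Om))$ only gives strong convergence of their scalar product $g_h$ in $L^1((0,T)\times\Om)$, and pairing an $L^1$-convergent sequence with $\vrho_h\weak\vrho$ in $L^{\gamma+1}$ does not pass to the limit; one needs $g_h\to g$ strongly in $L^{(\gamma+1)'}$ with $(\gamma+1)'=\tfrac{\gamma+1}{\gamma}>1$. This is exactly where the paper inserts an extra interpolation step: from $\vu_h,\,\Aop{\Div\vu_h}\inb L^\infty(0,T;\vc{L}^2(\Om))\cap L^2(0,T;\vc{L}^{2^*}(\Om))$ one obtains a uniform bound in $L^4(0,T;\vc{L}^\beta(\Om))$ with $\tfrac{2}{\beta}=\tfrac12+\tfrac{1}{2^*}$, hence $g_h\inb L^2((0,T)\times\Om)$; combined with the $L^1$ convergence this upgrades to $g_h\to g$ in $L^p$ for every $p<2$, and since $(\gamma+1)'<2$ (because $\gamma>N/2\geq 1$) the pairing with $\vrho_h\weak\vrho$ in $L^{\gamma+1}$ becomes legitimate. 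Without this (or an equivalent uniform-integrability) argument your proof does not close.

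A secondary, conceptual point: you also assert that the left-hand side of \eqref{eq:effvisc} converges to $\int_0^T\!\!\int_\Om\psi\,\overline{\eff(\vrho,\vu)}\,\vrho\,dxdt$ ``by the weak convergences \dots one factor converges strongly in the right dual sense''. Neither $\eff(\vrho_h,\vu_h)$ nor $\vrho_h$ converges strongly (convergence in $C([0,T];L^p_{\mathrm{weak}})$ is a weak mode), and if that product could be passed to the limit directly the lemma would be vacuous --- this identity is precisely what the effective-viscous-flux machinery is designed to prove. The claim is harmless only because your argument does not actually need it: once the time-derivative term is shown to converge to the right-hand side of \eqref{eq:conteff}, the identity \eqref{eq:effvisc} together with \eqref{eq:conteff} yields the lemma (the leftover term $\int_0^T\!\!\int_\Om\psi\,\eff(\vrho_h,\vu_h)\avg{\vrho_h^0}\,dxdt$ does pass to the limit, since $\avg{\vrho_h^0}$ is a convergent constant and $\eff(\vrho_h,\vu_h)$ converges weakly). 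The rest of your bookkeeping --- integrating by parts in space first versus the paper's summation by parts in time first, identifying $\Pth{\Delta^{-1}[\Div\vu_h]}$ with $\partial_t\Delta^{-1}[\Div\Pi_{\mathcal{L}}\vu_h]$, and applying Lemma \ref{lemma:productionbound} to the $h$-dependent test function $\psi\Delta^{-1}[\Div\vu_h]$ whose gradient is uniformly bounded in $L^2(0,T;\vc{L}^{2^*}(\Om))$ --- is sound and equivalent to what the paper does.
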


\begin{proof}
Let $\psi \in C^1_{c}(0,T)$ be arbitrary and set $\phi = \psi(\varrho_{h} - \avg{\vrho^0})$ in \eqref{eq:effvisc} to obtain
\begin{equation}\label{eq:convvisc1}
\begin{split}
&\lim_{h \rightarrow 0 }\int_{0}^T\int_{\Omega}\psi P_{f}(\varrho_{h}, \vc{u}_{h})\varrho_{h}\ dxdt
= \lim_{h\rightarrow 0}\int_{0}^T \int_{\Om} \Pth{\vc{u}_{h}}\psi \Aop{\varrho_{h} - \avg{\vrho^0}}dxdt,
\end{split}
\end{equation} 
where we have also used Lemmas \ref{lemma:higherorderpressure} and \ref{lemma:interpolationerrorbound}.
As in \eqref{eq:pressure3} and \eqref{eq:pressure4} we can use summation by parts and the continuity scheme \eqref{FEM:contequation} to deduce
the following equality for the   the right--hand side:
\begin{equation*}
	\begin{split}
		&\int_{0}^T \int_{\Om} \Pth{\vc{u}_{h}}\psi \Aop{\varrho_{h} - \avg{\varrho_{h}^0}}dxdt \\
	 	&= -\sum_{m=1}^{M} \Delta t \int_{\Omega}\vc{u}_{h}^{m-1}\Pth{\psi^{m}} \Aop{\varrho_{h}^{m} -\avg{\varrho_{h}^0}} 
			+ \vc{u}_{h}^{m-1}\psi^{m-1}\Aop{\Pth{\varrho_{h}^{m}}}\ dx \\
		&\qquad \qquad \qquad  -  \frac{1}{\Delta t}\int_{0}^{\Delta t} 
			\int_{\Omega}\psi \vc{u}_{h}^0\Aop{(\varrho_{h} - \avg{\varrho_{h}^0})} dxdt \\
		&= -\sum_{m=1}^{M} \Delta t \int_{\Omega}\vc{u}_{h}^{m-1}\Pth{\psi^{m}} \Aop{\varrho_{h}^{m} -\avg{\varrho_{h}^0}}\ dx \\
		&\qquad  \quad + \sum_{m=1}^M \Delta t \int_{\Om}\psi^{m-1} \vc{u}_{h}^m \varrho_{h}^m \Aop{\Div \vc{u}^{m-1}_{h}}\ dx \\
		&\qquad  \qquad \quad + \sum_{m=1}^M \Delta t \sum_{E \in E_{h}}\int_{\partial E}
					\psi^m\jump{\varrho_{h}^m}_{\partial E}(\vc{u}^m_{h} \cdot \nu)^-(\Pi_{h}^V - \mathbb{I})\Delta^{-1}(\Div \vc{u}^{m-1}_{h})\ dS(x) \\
		& \qquad\quad		-  \frac{1}{\Delta t}\int_{0}^{\Delta t} \int_{\Omega}\psi \vc{u}_{h}^0\Aop{(\varrho_{h} - \avg{\varrho_{h}^0})}\ dxdt.
	\end{split}
\end{equation*}
Taking the limit $h \rightarrow 0$ and applying Lemma \ref{lemma:productionbound} gives
\begin{equation}\label{eq:notsoeasy}
	\begin{split}
	&\lim_{h \rightarrow 0}\int_{0}^T \int_{\Om}\Pth{\vc{u}_{h}}\psi \Aop{\varrho_{h} - \avg{\varrho_{h}^0}}dxdt \\
	&=\lim_{h \rightarrow 0}\sum_{m=1}^M \Delta t \int_{\Om}\psi^{m-1} \vc{u}_{h}^m \varrho_{h}^m \Aop{\Div \vc{u}^{m-1}_{h}}\ dx  \\
	&\qquad \qquad  - \int_{0}^T\int_{\Omega}\vc{u}\psi'(t)\Aop{\varrho - \avg{\vrho^0}}\ dxdt. 
	\end{split}
\end{equation}
We will now pass to the limit in the first term on the right-hand side. 

From Lemmas \ref{lemma:higherorderpressure}, \ref{lemma:velocitycompactness}, and \ref{lemma:samelimit}, we
have that
\begin{align}
	\vr_h &\weak \vr\quad \text{in }L^\infty(0,T;L^\gamma(\Om))\cap L^{\gamma+1}(0,T;L^{\gamma+1}(\Om)), \nonumber\\
	\vu_h &\rightarrow \vu \quad \text{in }  L^2(0,T;\vc{L}^2(\Om)), \label{eq:raj}\\
	\Aop{\Div \vu_h} &\rightarrow \Aop{\Div\vu} \quad \text{in }  L^2(0,T;\vc{L}^2(\Om)).\label{eq:rajraj}
\end{align}
This is insufficient to pass to the limit in the desired term. 
However, since $\vu_h \in_b L^\infty(0,T;\vc{L}^2(\Om))\cap  L^2(0,T;\vc{L}^{2^*}(\Om))$, we can 
by similar arguments as in the proof of Lemma \ref{lemma:samelimit} deduce that
$\Aop{\Div \vu_h} \in_b L^\infty(0,T;\vc{L}^2(\Om))\cap  L^2(0,T;\vc{L}^{2^*}(\Om))$. 
Let $\beta$ be given by
$$
\frac{2}{\beta} = \frac{1}{2} + \frac{1}{2^*}.
$$
Then, $\beta \geq \frac{2N}{N-1} - \epsilon$, for any $\epsilon > 0$. Since $2 \leq \beta$, 
the standard interpolation inequality can be applied and yields
\begin{equation*}
	\begin{split}
			\int_0^T \|f\|_{L^\beta(\Om)}^4~dt &\leq \int_0^T \|f\|^2_{L^2(\Om)}\|f\|^2_{L^{2^*}(\Om)}~dt \\
			 &\leq \|f\|^2_{L^\infty(0,T;L^2(\Om))}\|f\|^2_{L^2(0,T;L^{2^*}(\Om))}.
	\end{split}
\end{equation*}
From this inequality, we conclude
\begin{equation}\label{eq:inta}
		\Aop{\Div \vu_h},\vu_h \inb L^4(0,T;L^{\beta}(\Om)),
\end{equation}
For notational convenience, we introduce the function $g_h$ 
\begin{equation*}
	g_h(t, \cdot) = \vu_h(t, \cdot)\cdot\Grad \Delta^{-1}\left[\Div \vu_h(t-\Delta t, \cdot)\right].
\end{equation*}
Note that $g_h$ is precisely the scalar product in \eqref{eq:notsoeasy}. 
From the H\"older inequality and \eqref{eq:inta}, we have in particular that
\begin{equation*}
	g_h \inb L^2(0,T;L^{2}(\Om))
\end{equation*}
This, together with \eqref{eq:raj} and \eqref{eq:rajraj}, tells us that
\begin{equation*}
	g_h \rightarrow g := \vu \Aop{\Div \vu}, \quad \text{in $L^p(0,T;L^p(\Om))$, for any $p<2$, as $h \rightarrow 0$}.
\end{equation*}
Hence, $g_h\vr_h \weak g\vr$ in the sense of distributions on $(0,T) \times \Om$.
This is sufficient to pass to the limit in the first term on the right-hand side 
of \eqref{eq:notsoeasy}. By sending $h \rightarrow 0$ in \eqref{eq:notsoeasy}, we obtain the identity
\begin{equation}\label{eq:convvisc2}
\begin{split}
&\lim_{h \rightarrow 0}\int_{0}^T \int_{\Om}\Pth{\vc{u}_{h}}\psi \Aop{\varrho_{h} - \avg{\varrho_{h}^0}}dxdt \\ 
&\qquad \quad = \int_{0}^T\int_{\Omega}\psi \varrho \vc{u} \Aop{\Div \vc{u}} - \vc{u}\psi'(t)\Aop{\varrho - \avg{\vrho^0}}\ dxdt.
\end{split}
\end{equation}
Then, \eqref{eq:convvisc2} in \eqref{eq:convvisc1} yields
\begin{equation*}
\begin{split}
&\lim_{h \rightarrow 0 }\int_{0}^T\int_{\Omega}\eff(\varrho_{h}, \vc{u}_{h})\psi \varrho_{h}\ dxdt \\
&\qquad =\int_{0}^T\int_{\Omega}\psi \varrho \vc{u} \Aop{\Div \vc{u}} - \vc{u}\psi'(t)\Aop{\varrho - \avg{\vrho^0}}\ dxdt \\
&\qquad =  \int_{0}^T \int_{\Omega} \psi \varrho \vc{u} \left(\Aop{\Div \vc{u}}\right) 
		+ \psi'(t) (\varrho- \avg{\vrho^0}) \left(\Delta^{-1}\Div \vc{u}\right) \ dxdt \\
&=\int_{0}^T\int_{\Omega}\overline{\eff(\varrho, \vc{u})}\psi \varrho\ dxdt,
\end{split}
\end{equation*}
where the last equality is \eqref{eq:conteff}. This concludes the proof.
\end{proof}

We are now in a position to prove strong convergence of the density approximations. 
\begin{lemma}[Strong convergence of the density]\label{lem:strongconv}
	Suppose that \eqref{eq:basic-conv} holds. Then, passing to 
	a subsequence if necessary,
	$$
	\vrho_{h} \rightarrow \vrho
	\quad \text{a.e.~in~$(0,T)\times \Omega$.}
	$$
\end{lemma}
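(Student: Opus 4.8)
The plan is to follow the classical Lions--Feireisl strategy: combine the effective viscous flux identity of Lemma~\ref{lemma:effectiveflux} with the renormalized continuity equation, used at the discrete level via \eqref{FEM:renormalized} and at the continuous level via Lemma~\ref{lemma:feireisl}, for the renormalizing function $B(\vrho)=\vrho\log\vrho$, and then to exploit the strict convexity of $s\mapsto s\log s$ through Lemma~\ref{lem:prelim}.

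First I would unpack Lemma~\ref{lemma:effectiveflux}. Writing $\eff(\vrho_h,\vc{u}_h)\vrho_h=p(\vrho_h)\vrho_h-(\lambda+\mu)\vrho_h\Div\vc{u}_h$ and passing to the limit in each factor --- legitimate because of the higher integrability of Lemma~\ref{lemma:higherorderpressure} --- the identity becomes
\[
\int_0^T\!\!\int_\Om\psi\Big(\overline{p(\vrho)\vrho}-\overline{p(\vrho)}\,\vrho\Big)\,dx\,dt
=(\lambda+\mu)\int_0^T\!\!\int_\Om\psi\Big(\overline{\vrho\Div\vc{u}}-\vrho\Div\vc{u}\Big)\,dx\,dt
\]
for all $\psi\in C_c^1(0,T)$. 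Since $p(s)=as^\gamma$ is nondecreasing, passing to the limit in the inequality $(p(\vrho_h)-p(\vrho))(\vrho_h-\vrho)\ge0$ tested against a nonnegative $\psi$ yields $\overline{p(\vrho)\vrho}\ge\overline{p(\vrho)}\,\vrho$ a.e., and hence $\int_\Om(\overline{\vrho\Div\vc{u}}-\vrho\Div\vc{u})(t,\cdot)\,dx\ge0$ for a.e.\ $t\in(0,T)$.

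Next I would establish two balance laws for $\vrho\log\vrho$. On the discrete side, take $\phi_h\equiv1$ and $B(\vrho)=\vrho\log\vrho$ (so $b(\vrho)=\vrho$ and $B''>0$) in the renormalized continuity scheme \eqref{FEM:renormalized}; the jump term drops since $\jump{\phi_h}_\Gamma=0$ and the remaining diffusion contributions are nonnegative, so after summing over the time steps and letting $h\to0$ --- using the $C([0,T];L^p_{\mathrm{weak}})$ convergence of $\vrho_h$ and $\vrho_h\log\vrho_h$, the weak convergence $\vrho_h\Div\vc{u}_h\weak\overline{\vrho\Div\vc{u}}$, the strong convergence of $\vc{u}_h$ (Lemma~\ref{lemma:velocitycompactness}), and the convergence of the initial term --- one obtains the \emph{inequality}
\[
\int_\Om\overline{\vrho\log\vrho}(t,\cdot)\,dx+\int_0^t\!\!\int_\Om\overline{\vrho\Div\vc{u}}\,dx\,ds\le\int_\Om\vrho^0\log\vrho^0\,dx .
\]
On the continuous side, since $\vrho\in L^{\gamma+1}((0,T)\times\Om)\subset L^2$ (recall $\gamma>\frac{N}{2}\ge1$), Lemma~\ref{lemma:feireisl} shows that $\vrho$ is a renormalized solution in the sense of Definition~\ref{renormlizeddef}; choosing again $B(\vrho)=\vrho\log\vrho$ and integrating over $\Om$ (the convective flux integrates to zero because $\vc{u}\cdot\nu=0$ on $\pOm$) gives the corresponding \emph{equality}
\[
\int_\Om\vrho\log\vrho(t,\cdot)\,dx+\int_0^t\!\!\int_\Om\vrho\Div\vc{u}\,dx\,ds=\int_\Om\vrho^0\log\vrho^0\,dx .
\]

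Subtracting the equality from the inequality and invoking the sign from the first step,
\[
\int_\Om\big(\overline{\vrho\log\vrho}-\vrho\log\vrho\big)(t,\cdot)\,dx\le\int_0^t\!\!\int_\Om\big(\vrho\Div\vc{u}-\overline{\vrho\Div\vc{u}}\big)\,dx\,ds\le0
\]
for a.e.\ $t$; since convexity of $s\mapsto s\log s$ and Lemma~\ref{lem:prelim} give the reverse inequality $\overline{\vrho\log\vrho}\ge\vrho\log\vrho$ a.e., we conclude $\overline{\vrho\log\vrho}=\vrho\log\vrho$ a.e.\ in $(0,T)\times\Om$, and the strict convexity part of Lemma~\ref{lem:prelim} then delivers $\vrho_h\to\vrho$ a.e.\ (along a subsequence). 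I expect the delicate point to be the rigorous passage to the limit in the discrete identity: one must verify that $B(\vrho)=\vrho\log\vrho$ is an admissible renormalizer in \eqref{FEM:renormalized} --- which is fine because $\vrho_h^m>0$ by Lemma~\ref{lemma:vrho-props}, though a cutoff such as $\log(\vrho+\delta)$ followed by $\delta\to0$ may be convenient --- that the initial term $\int_\Om\vrho_h^0\log\vrho_h^0\,dx$ converges (uniform integrability from an $L^\gamma$ bound on $\vrho_h^0$ together with a.e.\ convergence), and that the time traces at a fixed $t$ of $\overline{\vrho\log\vrho}$ and of $\int_0^t\int_\Om\overline{\vrho\Div\vc{u}}$ are correctly identified using the $C([0,T];L^p_{\mathrm{weak}})$ compactness recorded above.
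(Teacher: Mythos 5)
Your proposal is correct and follows essentially the same route as the paper's proof: the discrete renormalized scheme \eqref{FEM:renormalized} with $\phi_h\equiv1$ and $B(\vrho)=\vrho\log\vrho$ gives the inequality, Lemma \ref{lemma:feireisl} gives the continuous equality, and the sign $\overline{\vrho\Div\vc{u}}-\vrho\Div\vc{u}\ge0$ comes from Lemma \ref{lemma:effectiveflux} plus monotonicity of $p$, after which Lemma \ref{lem:prelim} concludes. The only difference is presentational (you establish the sign of the viscous-flux defect first, the paper last), so no further comment is needed.
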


\begin{proof}	
The proof is identical to that of Lemma 6.6 in \cite{Karlsen1} and
is included for the sake of completeness. 	
	
In view of Lemma \ref{lemma:densityconv}, the 
limit $(\vrho,\vc{u})$ is a weak solution 
of the continuity equation and hence, by Lemma \ref{lemma:feireisl}, 
also a renormalized solution. In particular, 
$$
\left(\vrho\log \vrho\right)_t 
+ \Div \left( \left(\vrho\log\vrho\right)
\vc{u}\right)=\vrho \Div \vc{u} 
\quad \text{in the weak sense on $\cDom$.}
$$

Since $t\mapsto \vrho\log \vrho$ is continuous 
with values in some Lebesgue space 
equipped with the weak topology, we can use this 
equation to obtain for any $t>0$
\begin{equation}\label{eq:stngdenconv-eq1}
	\int_{\Omega} \left(\vrho \log \vrho\right)(t)\ dx
	-\int_{\Omega}\vrho_{0}\log \vrho_{0}\ dx
	= -\int_{0}^t \int_{\Omega}\vrho \Div \vc{u}\ dxds
\end{equation}

Next, we specify $\phi_h\equiv 1$ as test function in the 
renormalized scheme \eqref{FEM:renormalized}, multiply by $\Delta t$,
and sum the result over $m$. Making use of the 
convexity of $z\log z$, we infer for any $m=1,\dots,M$ 
\begin{equation}\label{eq:stngdenconv-eq2}
	\int_{\Omega}\vrho^m_{h}\log \vrho^m_{h}\ dx
	-\int_{\Omega}\vrho^{0}_h\log \vrho^{0}_h\ dx  
	\leq -\sum_{k=1}^m \Delta t\int_{\Omega}\vrho^m_{h}
	\Div \vc{u}^m_{h}\ dxdt.
\end{equation}

In view of the convergences stated at the beginning of this section 
and strong convergence of the initial data, we 
can send $h \to 0$ in \eqref{eq:stngdenconv-eq2} to obtain
\begin{equation}\label{eq:stngdenconv-eq3}
	\int_{\Omega} \Bigl(\overline{\vrho \log \vrho}\Bigr)(t)\ dx
	-\int_{\Omega}\vrho_{0}\log \vrho_{0}\ dx
	\le -\int_{0}^t \int_{\Omega}\overline{\vrho \Div \vc{u}}\ dxds.
\end{equation}

Subtracting \eqref{eq:stngdenconv-eq1} 
from \eqref{eq:stngdenconv-eq3} gives
\begin{align*}
	\int_{\Omega}\Bigl(\overline{\vrho \log \vrho}-\vrho \log \vrho\Bigr)(t)\ dx
	& \leq -\int_{0}^t\int_{\Omega}
	\overline{\vrho\Div \vc{u}}-\vrho \Div \vc{u}\ dxds,
\end{align*}
for any $t\in (0,T)$. Lemma \ref{lemma:effectiveflux} tells us that
\begin{equation*}
	\int_{0}^t\int_{\Omega} \overline{\vrho \Div \vc{u}}
	-\vrho \Div \vc{u}\ dxds 
	= \frac{a}{\mu + \lambda}\int_{0}^t\int_{\Omega}
	\overline{\vrho^{\gamma +1}}
	-\overline{\vrho^\gamma} \vrho\ dxds\ge 0,
\end{equation*}
where the last inequality follows 
as in  \cite{Feireisl:2004oe,Lions:1998ga}, so 
the following relation holds: 
$$
\overline{\vrho \log \vrho}=\vrho \log \vrho
\quad \text{a.e.~in $\Dom$.}
$$
Now an application of Lemma \ref{lem:prelim} brings the proof to an end.
\end{proof}

\subsection{Velocity scheme}

\begin{lemma}[Convergence of the momentum approximation]\label{lemma:momentumconv}
	The limit triple $(\vc{w},\vc{u},\vrho)$ 
	constructed in \eqref{eq:basic-conv} is a weak solution of 
	the velocity equation \eqref{eq:momentumeq} in the sense of \eqref{def:mixed-weak}.
\end{lemma}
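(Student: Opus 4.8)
The plan is to pass to the limit $h\to 0$ in the discrete velocity scheme \eqref{FEM:momentumeq}, tested against time averages of the canonical interpolants of smooth test functions. Since $\vc{C}_c^\infty(\Om)$ is dense in $\Hdiv$ and in $\Hcurl$ (cf.~\cite{Girault:1986fu}), a routine density argument reduces the task to establishing the two equations of \eqref{def:mixed-weak} for $\vc{v}\in\vc{C}_c^\infty([0,T)\times\Om)$ and $\vc{\eta}\in\vc{C}_c^\infty((0,T)\times\Om)$ (the convention already used in \eqref{eq:weak-u} and in \cite{Karlsen1}). Given such $\vc{v}$ and $\vc{\eta}$, I set
$$
\vc{v}_h^m=\Pi_h^V\Bigl(\tfrac{1}{\Delta t}\int_{t^{m-1}}^{t^m}\vc{v}(s,\cdot)\,ds\Bigr),\qquad
\vc{\eta}_h^m=\Pi_h^W\Bigl(\tfrac{1}{\Delta t}\int_{t^{m-1}}^{t^m}\vc{\eta}(s,\cdot)\,ds\Bigr),
$$
insert $(\vc{\eta}_h^m,\vc{v}_h^m)$ into \eqref{FEM:momentumeq}, multiply by $\Delta t$, and sum over $m=1,\dots,M$.

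The second equation of \eqref{def:mixed-weak} is recovered immediately: in $L^2(0,T;\vc{L}^2(\Om))$ one has $\vc{w}_h\weak\vc{w}$ while $\vc{\eta}_h\to\vc{\eta}$ strongly by Lemma \ref{lemma:interpolation}, and $\Curl\vc{\eta}_h=\Pi_h^V\Curl\vc{\eta}\to\Curl\vc{\eta}$ strongly (commuting diagram) while $\vc{u}_h\to\vc{u}$ strongly by Lemma \ref{lemma:velocitycompactness}, so both products converge. In the first equation, the $\Curl$ and $\Div$ terms pass to the limit by weak--strong convergence: $\Curl\vc{w}_h\weak\Curl\vc{w}$ and $\Div\vc{u}_h\weak\Div\vc{u}$ in $L^2(0,T;\vc{L}^2(\Om))$, while $\vc{v}_h\to\vc{v}$ and $\Div\vc{v}_h=\Pi_h^Q\Div\vc{v}\to\Div\vc{v}$ strongly. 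For the pressure term, Lemma \ref{lem:strongconv} gives $\vrho_h\to\vrho$ a.e.~in $\Dom$ and Lemma \ref{lemma:higherorderpressure} gives a uniform bound on $\vrho_h^\gamma$ in $L^{(\gamma+1)/\gamma}(\Dom)$, so $p(\vrho_h)\to p(\vrho)$ strongly in $L^1(\Dom)$ (in particular $\overline{\vrho^\gamma}=\vrho^\gamma$); combined with the uniform $L^\infty$ bound on $\Div\vc{v}_h$ this yields $\int_0^T\!\!\int_\Om p(\vrho_h)\Div\vc{v}_h\,dxdt\to\int_0^T\!\!\int_\Om p(\vrho)\Div\vc{v}\,dxdt$.

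The heart of the matter is the time-derivative term $\sum_{m=1}^M\Delta t\int_\Om\Pth{\vc{u}_h^m}\vc{v}_h^m\,dx=\sum_{m=1}^M\int_\Om(\vc{u}_h^m-\vc{u}_h^{m-1})\vc{v}_h^m\,dx$, which I would treat by Abel summation. Since $\vc{v}$ is supported in $[0,T)\times\Om$, for $h$ small $\vc{v}_h^M=0$, so no top boundary term appears and the sum equals $-\int_\Om\vc{u}_h^0\vc{v}_h^1\,dx-\sum_{m=1}^{M-1}\int_\Om\vc{u}_h^m(\vc{v}_h^{m+1}-\vc{v}_h^m)\,dx$. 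The first term converges to $-\int_\Om\vc{u}^0\vc{v}(0,\cdot)\,dx$ using $\vc{u}_h^0\to\vc{u}^0$ in $\vc{L}^2(\Om)$ (established inside the proof of Lemma \ref{lemma:hodge}) and $\vc{v}_h^1\to\vc{v}(0,\cdot)$. For the remaining sum, write $\vc{v}_h^{m+1}-\vc{v}_h^m=\Pi_h^V(\vc{v}^{m+1}-\vc{v}^m)$ with $\vc{v}^m=\tfrac{1}{\Delta t}\int_{t^{m-1}}^{t^m}\vc{v}(s,\cdot)\,ds$, and check, via Lemma \ref{lemma:interpolation} and the relation $\Delta t=\kappa h$, that the piecewise-constant-in-time function equal to $(\vc{v}_h^{m+1}-\vc{v}_h^m)/\Delta t$ on $(t^{m-1},t^m]$ converges strongly in $L^2(0,T;\vc{L}^2(\Om))$ to $\partial_t\vc{v}$; together with $\vc{u}_h\weak\vc{u}$ in $L^2(0,T;\vc{L}^2(\Om))$ this gives $\sum_{m=1}^{M-1}\int_\Om\vc{u}_h^m(\vc{v}_h^{m+1}-\vc{v}_h^m)\,dx\to\int_0^T\!\!\int_\Om\vc{u}\,\partial_t\vc{v}\,dxdt$. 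Collecting all the limits reproduces exactly the first equation of \eqref{def:mixed-weak}, and with the second equation already verified, $(\vc{w},\vc{u},\vrho)$ solves the mixed formulation. The only delicate point I expect is this time-derivative passage — the bookkeeping of interpolation errors in the discrete time difference and of the endpoint term — since everything else reduces to weak--strong products and the a.e.~density convergence already in hand.
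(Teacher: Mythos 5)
Your proposal is correct and follows essentially the same route as the paper's proof: test \eqref{FEM:momentumeq} with time-averaged canonical interpolants of smooth test functions, sum by parts in time, pass to the limit using Lemma \ref{lemma:interpolation}, the convergences \eqref{eq:basic-conv}, the strong convergence of $\vrho_h$ from Lemmas \ref{lemma:higherorderpressure} and \ref{lem:strongconv}, and conclude by density. Your bookkeeping of the time-derivative term and of the endpoint contribution $-\int_\Om\vu_h^0\vv_h^1\,dx$ (using $\vu_h^0\to\vu^0$ in $\vc{L}^2(\Om)$ from the proof of Lemma \ref{lemma:hodge}) is, if anything, slightly more careful than the paper's about recovering the initial datum in \eqref{def:mixed-weak}.
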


\begin{proof}
Fix $(\vc{v}, \vc{\eta}) \in \vc{C}^\infty_c((0,T)\times \Om)$, and introduce the projections 
$\vc{v}_{h} = \Pi_{h}^V \vc{v}$, $\vc{\eta}_{h} =\Pi_{h}^W \vc{\eta}$ and 
$\vc{v}_{h}^m = \frac{1}{\Delta t}\int_{t^{m-1}}^{t^m}\vc{v}_{h} \ dt$, 
$\vc{\eta}_{h}^m = \frac{1}{\Delta t}\int_{t^{m-1}}^{t^m}\vc{\eta}_{h} \ dt$. 

Utilizing $\vc{v}^m_{h}$ and $\vc{\eta}^m_{h}$ as test functions in 
the velocity scheme \eqref{FEM:momentumeq}, multiplying by $\Delta t$, 
 summing the result over $m$, and applying summation by parts,  we gather
\begin{equation}\label{eq:approx-mixed-weak}
	\begin{split}
		&-\int_{\Delta t}^T\int_{\Omega}\vu_h(t-\Delta t, x)\Pth{\vc{v}_h}~dxdt \\
		&\qquad + \mu\Curl \vc{w}_{h}\vc{v}_{h} 
		+ \left[(\mu + \lambda)\Div \vc{u}_{h}
		-p(\vrho_{h})\right]\Div \vc{v}_{h}\ dxdt
		= \int_\Om \vu_h^0 \vv_h^1~dx,\\
		&\int_0^T\int_{\Omega}\vc{w}_{h}\vc{\eta}_{h} 
		-\vc{u}_{h}\Curl \vc{\eta}_{h} \ dxdt=0.
	\end{split}
\end{equation}
In view of Lemma \ref{lemma:interpolation}, $\vc{v}_h \rightarrow \vc{v}$ in 
$L^\infty(0,T;\vc{W}^{\text{div},p})$ for any finite $p$ and 
$\vc{\eta}_h \rightarrow \vc{\eta}$ in $L^\infty(0,T;\vc{W}^{\text{curl},p})$.
Furthermore, by Lemmas \ref{lemma:higherorderpressure} and \ref{lem:strongconv}
$p(\vr_h) \rightarrow p(\vr)$ in $L^\alpha((0,T)\times \Om)$ for any 
$\alpha < \gamma +1$. Hence, we can send $h \rightarrow 0$ in 
\eqref{eq:approx-mixed-weak} to obtain that the limit constructed 
in \eqref{eq:basic-conv} satisfies \eqref{eq:weak-u} for all 
test functions $(\vc{v}, \vc{\eta}) \in \vc{C}^\infty_c((0,T)\times \Om)$.
Since $\vc{C}^\infty_c((0,T)\times \Om)$ is dense in both $L^2(0,T;\Hcurl)$
and $W^{1,2}(0,T;L^2(\Om))\cap L^2(0,T;\Hdiv)$ \cite{Girault:1986fu}
this concludes the proof.

\end{proof}

\begin{bibdiv}
\begin{biblist}
	\bib{Agmonn}{book}{
	   author={Agmon, Shmuel},
	   title={Lectures on elliptic boundary value problems},
	   series={Prepared for publication by B. Frank Jones, Jr. with the
	   assistance of George W. Batten, Jr. Van Nostrand Mathematical Studies,
	   No. 2},
	   publisher={D. Van Nostrand Co., Inc., Princeton, N.J.-Toronto-London},
	   date={1965},
	   pages={v+291},
	}
	\bib{Arnold}{article}{
	   author={Arnold, Douglas N.},
	   author={Falk, Richard S.},
	   author={Winther, Ragnar},
	   title={Multigrid in $H({\rm div})$ and $H({\rm curl})$},
	   journal={Numer. Math.},
	   volume={85},
	   date={2000},
	   number={2},
	   pages={197--217},
	   issn={0029-599X},
	}
	\bib{Arnold:2006wj}{article}{
	   author={Arnold, Douglas N.},
	   author={Falk, Richard S.},
	   author={Winther, Ragnar},
	   title={Finite element exterior calculus, homological techniques, and
	   applications},
	   journal={Acta Numer.},
	   volume={15},
	   date={2006},
	   pages={1--155},
	   issn={0962-4929},
	}
	\bib{Brenner}{book}{
	   author={Brenner, Susanne C.},
	   author={Scott, L. Ridgway},
	   title={The mathematical theory of finite element methods},
	   series={Texts in Applied Mathematics},
	   volume={15},
	   edition={2},
	   publisher={Springer-Verlag},
	   place={New York},
	   date={2002},
	   pages={xvi+361},
	   isbn={0-387-95451-1},
	}
\bib{Feireisl:2004oe}{book}{
   author={Feireisl, Eduard},
   title={Dynamics of viscous compressible fluids},
   series={Oxford Lecture Series in Mathematics and its Applications},
   volume={26},
   publisher={Oxford University Press},
   place={Oxford},
   date={2004},
   pages={xii+212},
   isbn={0-19-852838-8},
}

\bib{Gallouet1}{article}{
   author={Gallou{\"e}t, Thierry},
   author={Herbin, Raphaele},
   author={Latch{\'e}, Jean-Claude},
   title={A convergent finite element-finite volume scheme for the compressible Stokes problem. Part I: The isothermal case},
   journal={Math. Comp., online},
   date={2009},
}

\bib{Gallouet2}{article}{
   author={Gallou{\"e}t, Thierry},
   author={Herbin, Raphaele},
   author={Latch{\'e}, Jean-Claude},
   title={A convergent finite element-finite volume scheme for the compressible Stokes problem. Part II: The isentropic case},
   journal={Preprint},
   date={2009},
}

%

\bib{Karlsen1}{article}{
   author={Karlsen, Kenneth},
   author={Karper, Trygve K.}
   title={Convergence of a mixed method for a semi--stationary compressible Stokes system},
   journal={Preprint},
   date={2009},
}

\bib{Karlsen2}{article}{
   author={Karlsen, Kenneth},
   author={Karper, Trygve K.}
   title={A convergent nonconforming finite element method for compressible Stokes flow},
   journal={Preprint},
   date={2009},
}

\bib{Girault:1986fu}{book}{
   author={Girault, Vivette},
   author={Raviart, Pierre-Arnaud},
   title={Finite element methods for Navier-Stokes equations},
   series={Springer Series in Computational Mathematics},
   volume={5},
   note={Theory and algorithms},
   publisher={Springer-Verlag},
   place={Berlin},
   date={1986},
   pages={x+374},
   isbn={3-540-15796-4},
}


\bib{Lions:1998ga}{book}{
   author={Lions, Pierre-Louis},
   title={Mathematical topics in fluid mechanics. Vol. 2},
   series={Oxford Lecture Series in Mathematics and its Applications},
   volume={10},
   note={Compressible models;
   Oxford Science Publications},
   publisher={The Clarendon Press Oxford University Press},
   place={New York},
   date={1998},
   pages={xiv+348},
   isbn={0-19-851488-3},
}

\bib{KZEN}{article}{
   author={Min, Lu},
   author={Kazhikhov, Alexandre V.},
   author={Ukai, Seiji},
   title={Global solutions to the Cauchy problem of the Stokes approximation
	   equations for two-dimensional compressible flows},
   journal={Comm. Partial Differential Equations},
   volume={23},
   date={1998},
   number={5-6},
   pages={985--1006},
   issn={0360-5302},
	}

	\bib{Nedelec:1980ec}{article}{
	   author={N{\'e}d{\'e}lec, J.-C.},
	   title={Mixed finite elements in ${\bf R}\sp{3}$},
	   journal={Numer. Math.},
	   volume={35},
	   date={1980},
	   number={3},
	   pages={315--341},
	}

	\bib{KZTO}{article}{
	   author={Va{\u\i}gant, V. A.},
	   author={Kazhikhov, A. V.},
	   title={Global solutions of equations of potential flows of a compressible
		   viscous fluid for small Reynolds numbers},
	   language={Russian, with Russian summary},
	   journal={Differentsial\cprime nye Uravneniya},
	   volume={30},
	   date={1994},
	   number={6},
	   pages={1010--1022, 1102},
	   issn={0374-0641},
	   translation={
	     journal={Differential Equations},
	     volume={30},
	      date={1994},
	      number={6},
	      pages={935--947},
	      issn={0012-2661},
		   },
		}

\end{biblist}
\end{bibdiv}

\end{document}